\documentclass{amsart}
\usepackage{amssymb}
\usepackage{graphicx}
\usepackage[usenames]{color}

\usepackage[colorlinks = true,
        urlcolor  = blue]{hyperref}
        \usepackage{xcolor}
\usepackage{cancel}

\usepackage{multirow} 
\usepackage{algorithm}
\usepackage{algpseudocode}

\newtheorem{theorem}{Theorem}[section]
\newtheorem{proposition}[theorem]{Proposition}

\newtheorem{assumption}{Assumption}
\newtheorem{lemma}[theorem]{Lemma}
\newtheorem{remark}[theorem]{Remark}
\newtheorem{definition}[theorem]{Definition}
\newtheorem{example}[theorem]{Example}

\usepackage{xcolor}

\usepackage{subcaption}

\begin{document}
\title[Conditional Propagation of Chaos]{Emergence of Common Noise: Quantitative Conditional Propagation of Chaos}
\author{Vlad Bally,  Eva L\"ocherbach}
\address{V. Bally: LAMA, Universit\'e Gustave Eiffel, France.\\
Email vlad.bally@univ-eiffel.fr}
\address{E. L\"ocherbach: CMAP, 
Ecole Polytechnique, France.\\
E-mail: eva.loecherbach@polytechnique.edu}

\begin{abstract}
We study a dynamical invariance principle for interacting particle systems
with mean-field interactions and common noise emerging through a collective
stochastic perturbation. The particle dynamics combine autonomous evolution
with a weakly scaled random bombardment whose cumulative effect generates a
Brownian common noise in the large population limit. Working in a general
abstract framework based on stochastic flows and the stochastic sewing
lemma, we establish quantitative conditional propagation of chaos estimates
for both discrete Euler schemes and their continuous-time flow limits. Our
approach yields explicit Wasserstein convergence rates and applies in
particular to jump-diffusion models motivated by interacting spiking neuron
systems. The analysis relies on quantitative central limit theorems of Rio
and Bonis together with a stochastic sewing argument adapted to $L^2$-valued
flows.
\end{abstract} 

\maketitle
\textbf{Keywords}: quantitative conditional propagation of chaos,  emergence of common noise, central limit theorem, 
stochastic sewing lemma.

\textbf{AMS Classification 2020:} 60F17, 60F15

\section{Introduction}

\subsection{Emergence of common noise from collective perturbations}

The purpose of this paper is to study the emergence of common noise in
interacting particle systems and to obtain quantitative conditional
propagation of chaos estimates for the associated mean-field limits.

We work in a fairly general framework with
discrete time schemes constituted by $N$ particles taking values in $\mathbb R^d$.  
Each particle is subject
to two sources of randomness, an autonomous part and a stochastic
perturbation term generated by the aggregation of many small stochastic
inputs to which each particle is equally exposed. This common
stochastic perturbation term leads to the presence of a common source of
noise $W$ in the mean field limit.

The main example we have in mind is when this common noise $W$ is a standard Brownian motion 
and when each particle follows an equation 
of jump-diffusion type, given by
\begin{eqnarray}
\mathbb{X}_{s,t}^{N,i} &=&\mathbb{X}^{N,i}+\int_{s}^{t}\sigma _{0}(\mathbb{X}%
_{s,r}^{N,i},\mu _{N}( \mathbb{X}_{s,r}^{N}, \varphi))dr+\int_{s}^{t}\sigma
_{1}(\mathbb{X}_{s,r}^{N,i},\mu _{N}(\mathbb{X}%
_{s,r}^{N}, \varphi)dB_{r}^{i}  \notag \\
&&+\int_{]s,t]}\int_{E_{1}}c(u,\mathbb{X}_{s,r}^{N,i},\mu _{N}(%
\mathbb{X}_{s,r}^{N}, \varphi))N^{i}(dr,du)  \notag \\
&&+\frac{1}{\sqrt{N}}\left( \sum_{j=1}^{N}\int_{s}^{t}\overline{\sigma }(%
\mathbb{X}_{s,r}^{N,j})d\overline{B}_{r}^{j}+\sum_{j=1}^{N}\int_{]s,t]}%
\int_{E_{2}}\overline{c}(u,\mathbb{X}_{s,r}^{N,j}))\widetilde{M}%
^{j}(dr,du)\right) ,  \label{EX1intro}
\end{eqnarray}%
for any $0\leq s\leq t<\infty ,$ and any $i=1,...,N.$ 

In the above system, $B^{i},\overline{B}^{i},i=1,\ldots ,N,$ are independent
standard Brownian motions, and $N^{i}$ and $M^{i},i=1,\ldots ,N,$ are
independent Poisson random measures on some measurable space $(E_{1},\nu
_{1})$ ($(E_{2},\nu _{2}),$ respectively), with intensity $dr\nu _{1}(du)$ ($%
dr\nu _{2}(du),$ respectively). Moreover, for each $1\leq i\leq N,$ $%
\widetilde{M}^{i}(dr,du)=M^{i}(dr,du)-dr\nu _{2}(du)$ denotes the
compensated Poisson random measure, and $\mu _{N}(\mathbb{X}%
_{s,r}^{N},\varphi)=\frac{1}{N}\sum_{i=1}^{N}\varphi (\mathbb{X}_{s,r}^{N,i})$.
In this example, the collective random perturbation is given by the terms of the last line in (\ref{EX1intro}).

Each individual input of the collective random perturbation is negligible, but their cumulative
effect becomes visible at the macroscopic level. After the natural
central-limit scaling $N^{-1/2}$, these perturbations generate a
Brownian common noise in the large population limit.

As a consequence of the presence of this Brownian common noise, the limiting particles are no longer independent.
Instead, they are independent conditionally on the realization of the
common noise. The limiting equation is therefore a conditional
McKean--Vlasov equation and the relevant notion of convergence is the
conditional propagation of chaos property. 

Coming back to the system of (\ref{EX1intro}), its associated infinite limit system
is given by 
\begin{eqnarray}
\widehat{\mathbb{X}}_{s,t}^{i} &=&\widehat{\mathbb{X}}^{i}+\int_{s}^{t}%
\sigma _{0}(\widehat{\mathbb{X}}_{s,r}^{i},\widehat{\mathbb{\mu }}_{s, r }(\varphi
))dr+\int_{s}^{t}\sigma _{1}(\mathbb{X}%
_{s,r}^{i},\widehat{\mu }_{s, r }(\varphi ))dB_{r}^{i}  \notag \\
&&+\int_{]s,t]}\int_{E_{1}}c(u,\widehat{\mathbb{X}}_{s,r}^{i},\widehat{\mu 
}_{s, r}(\varphi ))N^{i}(dr,du)  
+\int_{s}^{t}\sqrt{\widehat{\mu }_{s, r}(f)}%
dW_{r} . \label{eq:limit}
\end{eqnarray}%
Here $\widehat{\mu }_{s, r}(\varphi)$ is the
conditional expectation $E(\varphi (\widehat{\mathbb{X}}_{s,r}^{i})| 
\mathcal{F}_s^W  ),$ where we put $\mathcal{F}_{r,s}^W = \sigma ( {\ W_u ,r\le  u \le s }),$ and the function $f$ is related to the variance of the input
given by the last line of equation (\ref{EX1intro}). 

A motivating example for our work is given by systems of interacting
spiking neurons arising in mathematical neuroscience, see \cite{AAEE} and 
\cite{FL}. In these models,
each neuron receives a large number of small random synaptic inputs.
It was shown in \cite{ELL} that although the influence of any individual input vanishes as the
population size grows, the cumulative effect of all inputs survives
and generates a common Brownian motion in the limit, 
leading to conditional propagation of chaos. The approach of \cite{ELL} was
based on the strong coupling result of Koml\'{o}s, Major and Tusn\'{a}dy \cite{KMT} that allows to couple a
centered Poisson process with a standard Brownian motion (KMT coupling).
Applying this strong coupling on small time intervals and working with a
discrete time approximation scheme of the original particle system, 
an explicit rate of convergence for a modified Wasserstein $1-$%
distance was obtained in \cite{ELL}, and  this rate was shown to be  of order $N^{-1/10}.$ 

\subsection{Main contribution}
The objective of the present paper is to develop a general framework
for quantitative conditional propagation of chaos in situations where
the common noise emerges through a dynamical invariance principle.

Our approach differs substantially from the previous work based on
strong KMT couplings. The key observation is that, in many situations,
the emergence of the common noise can be viewed as a quantitative
central-limit phenomenon. We therefore replace KMT arguments by
quantitative Wasserstein estimates in a general setting. If this setting is given
by the central limit theorem, then we rely on the quantitative estimates obtained by
Rio \cite{Rio} in dimension one and by Bonis \cite{Bonis} in arbitrary
dimension.

This point of view leads to two main contributions.

First, we introduce an abstract framework for interacting particle
systems subject to collective perturbations. Within this framework,
the approximation of the original perturbation by its Gaussian limit
is controlled by Wasserstein estimates at the level of a single
time step.

Second, combining these quantitative CLT estimates with a stochastic
sewing argument, we obtain quantitative propagation-of-chaos bounds
both for discrete Euler schemes and for the associated continuous-time
stochastic flows.

In the central-limit setting, the resulting
error is of order $N^{-1}$ for the squared Wasserstein distance,
corresponding to the optimal central-limit scaling.

\subsection{Proof strategy}
The proof relies on a three-level approximation procedure.

Starting from the original particle system $X^N$, we first introduce an
auxiliary  system $\bar X^N$ obtained by replacing the
collective perturbation by its Gaussian limit. Quantitative
Wasserstein bounds in the central limit theorem allow us to control
the distance between these two systems. 
This auxiliary system $ \bar X^N$ can 
be seen as a mean-field approximation of the limit system
in which the limit driving common noise $W$ is already present.

We then compare $\bar X^N$ with a conditionally independent system
$\hat X^N$ driven by the same common noise. This second step relies on
conditional independence and quantitative law-of-large-numbers
estimates.

Schematically, we follow the approximation scheme 

\[
X^N
\longrightarrow
\bar X^N
\longrightarrow
\hat X^N .
\]

The first arrow is governed by the quantitative central limit theorem,
while the second arrow corresponds to the law of large numbers, conditionally 
on the source of common noise. 

The analysis is first carried out for Euler schemes on dyadic
partitions. We then pass to continuous time by means of a stochastic
sewing argument in an $L^2$ setting. This yields convergence estimates
for the associated stochastic flows.

\subsection{Main result}

Our main theorem provides precise error bounds for the conditional
propagation-of-chaos property  both for  Euler schemes and
for the associated continuous-time flows. We write 
$X_{s,t }^{n,N}(X)$ for the Euler scheme associated with $X^N,$ with dyadic step size $ 2^{-n }, $
starting from the initial configuration $ X = ( X^1, \ldots, X^N )$ at time $s$ and evolving
up to time $t.$ Similarly, we write $\widehat{X}_{s,t}^{n,N}(X) $ for the 
Euler scheme associated with the limit system $ \hat X^N.$ Then our main result reads as follows.

\begin{theorem}[Informal statement]
Assume that the Gaussian approximation of the collective perturbation
satisfies a one-step Wasserstein estimate of order $\bar\varepsilon_N$
and that the associated conditional law of large numbers error is of
order $\hat\varepsilon_N$.

Under convenient Lipschitz and growth assumptions, if
$X^1,\ldots, X^N$ are i.i.d. (conditionally on the common noise when
present), then there exists a constant $C$, independent of $N$ and of
the discretization parameter $ n,$ such that

\[
\mathcal{W}_{2}^{2}(X_{s,t}^{n,N}(X),\widehat{X}_{s,t}^{n,N}(X))
\le
C\Big(
\frac1N+\bar\varepsilon_N+\hat\varepsilon_N
\Big).
\]
The same estimate holds for the associated continuous-time flows.
In the central-limit setting, the quantitative
results of Rio and Bonis imply

\[
\bar\varepsilon_N+\hat\varepsilon_N
=
O(N^{-1}),
\]
so that
\[
\mathcal{W}_{2}^{2}(X_{s,t}^{n,N}(X),\widehat{X}_{s,t}^{n,N}(X))
\le
\frac{C}{N} .
\]
\end{theorem}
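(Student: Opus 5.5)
The plan follows the three-level scheme $X^N\to\bar X^N\to\hat X^N$ described above. All three Euler schemes are built on the same dyadic grid $t_k=s+k2^{-n}$ and on the same probability space. The auxiliary scheme $\bar X^{n,N}$ uses the same autonomous noises $B^i,N^i$ as $X^{n,N}$. On each step $[t_k,t_{k+1}]$ its collective increment $\frac{1}{\sqrt N}\sum_j(\ldots)$ is replaced by the Gaussian increment $\sqrt{\mu_N(\bar X_{t_k},f)}\,\Delta W_k$. The two are coupled at each step via an optimal $\mathcal W_2$ coupling, conditionally on the current configuration. By the one-step hypothesis, the squared $L^2$ cost of this coupling is at most $\bar\varepsilon_N 2^{-n}$ (up to constants). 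Because the coupling is done step by step conditionally on the past, the coupling errors form a martingale-difference sequence. The conditionally independent system $\hat X^{n,N}$ then uses the same $B^i,N^i,W$, and replaces empirical means $\mu_N(\cdot,\varphi)$, $\mu_N(\cdot,f)$ by the conditional expectations $\hat\mu_{s,t_k}$ given $\mathcal F^W$.

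\textbf{Step 1} bounds $E|X^{n,N,i}_{s,t}-\bar X^{n,N,i}_{s,t}|^2$ by discrete Gronwall. We write the one-step difference as the Lipschitz drift/diffusion/jump differences plus the coupling error. The drift part contributes $C2^{-n}\sum_k E|\Delta_k|^2$. The martingale parts contribute the same, using It\^o/BDG isometry for the Brownian and compensated-jump terms and Lipschitz continuity in the measure argument through $|\mu_N(x,\varphi)-\mu_N(y,\varphi)|\le \frac{L}{N}\sum_j|x^j-y^j|$. The coupling errors contribute $\sum_k \bar\varepsilon_N 2^{-n}\le C\bar\varepsilon_N$, with no loss from summation thanks to orthogonality. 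Gronwall then gives $C\bar\varepsilon_N$, uniformly in $n$.

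\textbf{Step 2} compares $\bar X^{n,N}$ with $\hat X^{n,N}$ in the same way. The only new term is $E|\mu_N(\hat X_{t_k},\varphi)-\hat\mu_{s,t_k}(\varphi)|^2$, and similarly for $f$ inside the square root. For the square root we assume $f$ is bounded below, or use the Lipschitz growth assumptions, to linearize. Conditionally on $\mathcal F^W$, the $\hat X^{i}$ are i.i.d. because the initial data and idiosyncratic noises are. This term is therefore a conditional variance $\le C/N$ plus the conditional LLN error $\hat\varepsilon_N$. Gronwall gives $C(1/N+\hat\varepsilon_N)$. The triangle inequality and $\mathcal W_2^2\le E|\cdot|^2$ for the coupling yield the Euler bound.

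\textbf{Step 3} is the continuous-time statement, and I expect it to be the main obstacle. We view $(s,t)\mapsto X^{n,N}_{s,t}$ as a germ of $L^2$-valued flows and apply the stochastic sewing lemma. We need bounds uniform in $n$: a conditional-mean increment $\|E_{t_k}[\delta A_{s,u,t}]\|\le C|t-s|^{1+\beta}$ and an $L^2$ bound $\|\delta A_{s,u,t}\|_{L^2}\le C|t-s|^{1/2+\beta}$ for the Euler germs. The difficulty is the jump and common-noise terms, which only give $1/2$-H\"older regularity in $L^2$. Here I would exploit the martingale structure: the mean part is of order $|t-s|^2$ from Lipschitz drift, and the fluctuation part is of order $|t-s|^{1}$ in $L^2$. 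This lets $X^{n,N}$ converge to the flow as $n\to\infty$ at a rate independent of $N$. The estimate of Steps 1--2 then passes to the limit by lower semicontinuity of $\mathcal W_2$.

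Finally, in the CLT setting, the Rio and Bonis bounds give $\mathcal W_2^2$ of a normalized sum of $N$ i.i.d. increments to its Gaussian limit of order $1/N$. After scaling by the step size this gives $\bar\varepsilon_N=O(N^{-1})$, and $\hat\varepsilon_N=O(N^{-1})$ is the variance bound above, giving $C/N$.
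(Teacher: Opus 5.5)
Your three steps match the paper's proof. The passage $X\to\bar X$ rests on a conditionally optimal coupling of the collective increment whose error is centred given the past (Lemma~\ref{lem:44}, Proposition~\ref{prop:Euler}). The passage $\bar X\to\hat X$ is the synchronous coupling plus the conditional law of large numbers (Theorem~\ref{LLNW2}). The flow estimate follows by letting $n\to\infty$.

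The main difference is organisational. You build one pathwise coupling of all three chains and run Gronwall in $L^2$. The paper instead iterates $\mathcal W_2^2(\mathcal X^N_{s,t}(X),\overline{\mathcal X}^N_{s,t}(\overline X))\le(1+Q(t-s))\mathcal W_2^2(X,\overline X)+2(t-s)\overline\varepsilon_N$, re-coupling the current marginals at each step, and concludes with the triangle inequality for $\mathcal W_2$. Your version needs two checks. First, couple $S^N_{t_k,t_{k+1}}(X_{t_k})$ with $\sigma_N(X_{t_k})\Delta W_k$, at the same configuration, since that is what the one-step hypothesis controls; the term $(\sigma_N(X_{t_k})-\sigma_N(\overline X_{t_k}))\Delta W_k$ then goes into the Lipschitz terms. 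Second, check that each glued $\Delta W_k$ is $\mathcal N(0,2^{-n}I)$ given the past and the autonomous increments of its step. Then $W$ is a Brownian motion independent of $(B^i,N^i)$ and the initial data, so $\hat X^{n,N}$ has the right law and is conditionally i.i.d. given $\mathcal F^W$.

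In Step~3 the paper simply assumes $\mathbf A(\beta)$. If you verify it, note that with merely Lipschitz coefficients the conditional-mean defect is $O(|t-s|^{3/2})$ in $L^2$, not $O(|t-s|^2)$ (compare (\ref{EX6})). That is still enough, but the flow version of the sewing lemma also needs the Lipschitz condition $\mathbf{(L)}$. No rate uniform in $N$ is required: the Euler bound is uniform in $n$, so for fixed $N$ you may let $n\to\infty$ and use the continuity of $\mathcal W_2$ under $L^2$ convergence.

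The one real gap is in the CLT step. For fixed $x$ the summands $v^i_{s,t}(x^i)$ are independent but not identically distributed. You must therefore normalise them by $(t-s)^{-1/2}\sigma_N(x)^{-1}$, which uses $f_*>0$, and apply Bonis' bound, whose constant depends on the fourth moments of the normalised summands. Your sentence ``after scaling by the step size this gives $\overline\varepsilon_N=O(N^{-1})$'' thus needs those moments bounded uniformly in $t-s$, that is $E|v^i_{s,t}(x^i)|^4\le M_4(t-s)^2$. The paper imposes this as an explicit hypothesis. It does not follow from the Rio--Bonis theorems and is not automatic: for a compensated Poisson increment, $E|v|^4$ is of order $t-s$. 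Without it, the per-step bound you obtain is only of order $1/N$ rather than $(t-s)/N$. Summing over the roughly $2^n$ steps then destroys the uniformity in $n$ on which both your Step~1 and the passage to the flow depend. This moment condition must be stated as part of the central-limit setting.
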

Here, $\mathcal{W}_{2} $ denotes the Wasserstein $2-$distance on ${\mathbb{R}%
}^{d \times N },$ and we identify $\mathcal{W}_{2} (X, Y) = 
\mathcal{W}_{2} ( {\mathcal{L}}(X), {\mathcal{L}} (Y) ).$
We refer to Theorem \ref{theo:main} for the precise statement. The proof of the theorem combines a quantitative central limit theorem, a conditional law of large numbers, and a stochastic sewing argument.

Let us finally give some comments on the use of the stochastic sewing lemma in our paper.
The sewing lemma has been introduced in the early 2000's by Gubinelli \cite%
{Gubinelli} and by De la Pradelle and Feyel \cite{arnauddenis}. It has shed
new insight on the theory of rough paths, see in particular Davie \cite%
{davie} and the monograph by Friz and Hairer \cite{frizhairer}. A long list
of variants and generalizations have followed since then. The present paper
relies on the new results of L\^e \cite{Le} on the so-called stochastic
sewing lemma. A variant of this result for flows on the Wasserstein space
has been recently obtained by Alfonsi et al \cite{ABC}. However, here we are
concerned with applications on the $L^{2}-$space, instead of working on the
Wasserstein space. Therefore we have to adapt the stochastic sewing lemma to
this framework. This is done in Section \ref{sec:2} below.

\subsection{Organization of the paper}

Our paper is organized as follows. In Section 2 we state the stochastic
sewing lemma on the $L^2-$space in Lemma \ref{sewing}. In Section \ref{sec:4}
we introduce the core objects of the present paper and define the discrete
time dynamics of the stochastic particle systems. Here we also state our
main result, Theorem \ref{theo:main}, that gives the precise speed of
convergence for the Wasserstein $2-$distance both for the Euler schemes and
the limit flows in continuous time. 

The proof of Theorem \ref{theo:main} is given in Section \ref{sec:4true}.
Here we state in Lemma \ref{lem:44} the main coupling result between the one step 
Euler schemes for $ X^N$ and $ \bar X^N.$ The proof
of Theorem \ref{theo:main} is then completed in Section \ref{LN8} where we
control the error when passing from the auxiliary scheme $\bar X^N$ to the limit one $
\hat X^N .$ The main
arguments rely on the conditional independence of the coordinates of the
limit scheme, conditionally on $W,$ such that we may apply the strong law of
large numbers.

We then discuss in Section \ref{sec:CLT} the main example of our method
where the passage from $X^N $ to $\bar X^N $ is governed by
the Central Limit Theorem, using the
results of E. Rio and T. Bonis (\cite{Rio} and \cite{Bonis}). Finally, in
Section \ref{sec:5}, we apply our results to a general version of (\ref%
{EX1intro}).

\subsection{Notation}

Throughout this paper we will use the following notation.

\begin{itemize}
\item We note $C$ any arbitrary constant. The value of $C$ can change from
occurrence to occurrence (even within the same line) in an equation.
Moreover, if $C$ depends on some non-fixed parameter $\theta$, we write $%
C_\theta$.

\item For any $d \geq 1 $ and $x \in {\mathbb{R}}^d, $ we will write $|x| $
for the Euclidean norm of $x.$

\item For fixed $d \geq 1$ and any $d \times d-$matrix $A,$ we will use the
Hilbert-Schmidt norm $\| A\|_{HS} = \sqrt{\sum_{i, k = 1}^d |A^{i,k}|^2 }.$

\item For fixed $d \geq 1$ and any $d \times d-$matrix $A,$ the operator
norm is defined by $\| A\|_{op} = \sup_{ x \in {\mathbb{R}}^d , x \neq 0} 
\frac{ | Ax |}{|x|}.$ We will use that $\|A\|_{op} = \sqrt{ \lambda_{max} (
A A^* ) } , $ where $\lambda_{max} $ denotes the largest eigenvalue of a
matrix.

\item We also fix $d_{0}\leq d$ and for $w=(w^{1},...,w^{d_{0}})\in {\mathbb{%
R}}^{d_{0}} $ we define $1_{d_{0}}w\in {\mathbb{R}}^{d}$ by $1_{d_{0}}w=(0,
\ldots, 0, w^{1},...,w^{d_{0}}).$

\item If $\varphi : E \to {\mathbb{R}}^m $ is bounded, then we write $\|
\varphi \|_\infty = \sup_{ x \in E } | \varphi( x) | .$

\item $L_\varphi $ denotes the Lipschitz constant of a Lipschitz continuous
function $\varphi : E \to E^{\prime }$ defined on some metric space $( E, d) 
$ and taking values in another metric space $(E^{\prime }, d^{\prime }), $
that is, 
\begin{equation*}
L_\varphi = \sup_{ x , y \in E, x \neq y} \frac{ d^{\prime }( \varphi ( x),
\varphi (y ) )}{ d ( x,y) }.
\end{equation*}

\item We will work with a fixed filtration $( \mathcal{F}_t)_{t \geq 0}, $
and $E_s ( X)$ denotes the conditional expectation of $X$ with respect to $%
\mathcal{F}_{s},$ and $\tilde E_s( X) = X - E_s (X).$

\item We denote by $L_{t}^{2}({\mathbb{R}}^{d })$\textbf{\ }the space of $%
\mathcal{F}_{t}-$measurable, square integrable ${\mathbb{R}}^{d }-$valued
random variables, and by $L_{t}^{2}({\mathbb{R}}^{d\times N})$ the space of $%
\mathcal{F}_{t}-$measurable, square integrable ${\mathbb{R}}^{d\times N}-$%
valued random variables.

\item For any Polish space $(E, \mathcal{E}),$ we denote by $\mathcal{P}( E) 
$ the space of all probability measures on $( E, \mathcal{E}).$
\end{itemize}

\section{\protect\bigskip The stochastic sewing lemma}

\label{sec:2}

We consider a probability space $(\Omega ,\mathcal{F},P)$ with a filtration $%
(\mathcal{F}_{t})_{t \geq 0}$ and a Hilbert space $H$ (which will be ${%
\mathbb{R}}^{d}$ or ${\mathbb{R}}^{d\times N}$ in the following), equipped
with its norm $| \cdot | .$ We denote by $L_{s}^{2}(H)$ the space of all
square integrable $H-$valued random variables $X$ which are $\mathcal{F}%
_{s}- $ measurable. We shall write $\|X\|_2^2 = E |X|^2 $ for the associated
square of the $L^2-$norm.

The main object in this section is a family of applications $\mathcal{X}%
_{s,t}:L_{s}^{2}(H)\rightarrow L_{t}^{2}(H),0\leq s<t < \infty, $ such that $%
\mathcal{X}_{s,s}(X)=X.$ We also denote%
\begin{eqnarray}  \label{sf-1}
\mathcal{E}_{s,t}^{\mathcal{X}}(X) &=&\mathcal{X}_{s,t}(X)-X, \\
\widehat{\mathcal{E}}_{s,t}^{\mathcal{X}}(X) &=&E_{s}(\mathcal{E}_{s,t}^{%
\mathcal{X}}(X)), \quad \widetilde{\mathcal{E}}_{s,t}^{\mathcal{X}}(X)=%
\mathcal{E}_{s,t}^{\mathcal{X}}(X)-\widehat{\mathcal{E}}_{s,t}^{\mathcal{X}%
}(X),  \notag
\end{eqnarray}%
where $E_{s}$ designs the conditional expectation with respect to $\mathcal{F%
}_{s}.$ We think of $\widehat{\mathcal{E}}_{s,t}^{\mathcal{X}}(X) $ as being
the \textit{finite-variation} part of $\mathcal{E}_{s,t}^{\mathcal{X}}(X)$
and of $\widetilde{\mathcal{E}}_{s,t}^{\mathcal{X}}(X)$ as being the \textit{%
martingale} part of $\mathcal{E}_{s,t}^{\mathcal{X}}(X),$ in analogy to the
Doob-Meyer decomposition for semimartingales.

$\clubsuit \qquad (\mathbf{G)\quad }$We say that $\mathcal{X}_{s,t}$ has 
\textbf{linear growth }if there exists a constant $C_G $ such that for all $%
0 \le s \le t $ and for every $X\in L_{s}^{2}(H),$ 
\begin{eqnarray}
\| \widehat{\mathcal{E}}_{s,t}^{\mathcal{X}}(X) \|_2 &\leq &C_G (t-s) (1+\|
X\|_2 ) ,  \notag \\
\| \widetilde{\mathcal{E}}_{s,t}^{\mathcal{X}}(X) \|_2 &\le & C_G (
t-s)^{1/2} ( 1 + \|X\|_2) .  \label{sf0}
\end{eqnarray}

$\clubsuit \qquad (\mathbf{L})\qquad $We say that $\mathcal{X}_{s,t}$ is 
\textbf{Lipschitz continuous }if there exists a constant $C_{L} $ such that
for all $0 \le s \le t $ and for all $X,Y\in L_{s}^{2}(H),$ 
\begin{eqnarray}
i)\quad E\left\vert \widehat{\mathcal{E}}_{s,t}^{\mathcal{X}}(X)-\widehat{%
\mathcal{E}}_{s,t}^{\mathcal{X}}(Y)\right\vert ^{2} &\leq
&C_{L}(t-s)^{2}E\left\vert X-Y\right\vert ^{2} ,  \label{sf1} \\
ii)\quad E\left\vert \widetilde{\mathcal{E}}_{s,t}^{\mathcal{X}}(X)-%
\widetilde{\mathcal{E}}_{s,t}^{\mathcal{X}}(Y)\right\vert ^{2} &\leq
&C_{L}(t-s) E\left\vert X-Y\right\vert ^{2} .  \label{sf1'}
\end{eqnarray}

$\clubsuit \qquad \mathbf{(S(}\beta ))$ \textbf{Sewing property. }For $s<r<t$
we denote%
\begin{equation*}
\mathcal{E}_{s,r,t}^{\mathcal{X}}(X)=\mathcal{X}_{r,t}(\mathcal{X}%
_{s,r}(X))-X,
\end{equation*}%
and we apply the same operations $\widehat{\mathcal{E}}_{s,r,t}^{\mathcal{X}%
}(X)$ and $\widetilde{\mathcal{E}}_{s,r,t}^{\mathcal{X}}(X)$ to it as in (%
\ref{sf-1}). Let $\beta >0.$ We suppose that 
there exists a constant $C_{sew}$ 
such that 
\begin{eqnarray}
E\left\vert \widehat{\mathcal{E}}_{s,t}^{\mathcal{X}}(X)-\widehat{\mathcal{E}%
}_{s,r,t}^{\mathcal{X}}(X)\right\vert ^{2} &\leq &C_{sew}(1+E\left\vert
X\right\vert ^{2})(t-s)^{2+\beta },  \label{sl4} \\
E\left\vert \widetilde{\mathcal{E}}_{s,t}^{\mathcal{X}}(X)-\widetilde{%
\mathcal{E}}_{s,r,t}^{\mathcal{X}}(X)\right\vert ^{2} &\leq
&C_{sew}(1+E\left\vert X\right\vert ^{2})(t-s)^{1+\beta },  \label{sl4'}
\end{eqnarray}%
for every $0\leq s<r<t$ and every $X\in L_{s}^{2}(H).$

\begin{equation}  \label{Abeta}
\mbox{If $\mathbf{(G),(L),}$ and\textbf{\ }$\mathbf{(S(}\beta ))$ hold, we say
that $\mathbf{A(\beta })$ holds.}
\end{equation}

Let $s<t$ and let $\pi =\{s=t_{0}<t_{1}<...<t_{n}=t\}$ be a partition of the
interval $(s,t).$ We denote 
\begin{equation*}
\mathcal{X}_{s,t}^{\pi }=\mathcal{X}_{t_{n-1},t}\circ \mathcal{X}%
_{t_{n-2},t_{n-1}}\circ ....\circ \mathcal{X}_{s,t_{1}}.
\end{equation*}%
Given the partition $\pi,$ for all $0 \le i < n, $ we fix some $r_i $ with $%
t_{i}\leq r_{i}<t_{i+1}$ and define $\pi ^{\prime }=\{s=t_{0}\leq
r_{0}<t_{1}\leq r_{1}<t_{2}<...<t_{n}=t\}.$ We say that $\pi ^{\prime }$ is
a proper sub-partition of $\pi .$ The following lemma states a key estimate.

\begin{lemma}
Suppose that $\mathbf{A(\beta })$ holds for $\mathcal{X}$ and let $T>0.$ Let 
$0\leq s\leq t$ with $t-s\leq 1$ and $t\leq T,$ and consider a partition $%
\pi =\{s=t_{0}<t_{1}<...<t_{n}=t\}$ and a proper sub-partition $\pi ^{\prime
}$ of $\pi .$

\noindent \textbf{A }There exists a constant $C$ not depending on $\pi ,$
such that for any $X\in L_{s}^{2}(H),$ 
\begin{equation}
\left\Vert \mathcal{X}_{s,t}^{\pi }(X)\right\Vert _{2}^{2}\leq C\left(
(t-s)+\Vert X\Vert _{2}^{2}\right).  \label{sf15}
\end{equation}%
\textbf{B } There exists a constant $C_{T}$ not depending on $\pi $ nor on $%
\pi ^{\prime },$ such that for any $X^{1},X^{2}\in L_{s}^{2}(H),$ 
\begin{equation}
\left\Vert \mathcal{X}_{s,t}^{\pi }(X^{1})-\mathcal{X}_{s,t}^{\pi ^{\prime
}}(X^{2})\right\Vert _{2}^{2}\leq   C_{T}(\left\Vert X^{1}-X^{2}\right\Vert
_{2}^{2}+(t-s)\left( 1+\Vert X^{2}\Vert _{2}^{2}\right) \left\vert \pi
\right\vert ^{\beta }),  \label{s12}
\end{equation}%
with $\left\vert \pi \right\vert =\max_{i=1,...,n}(t_{i+1}-t_{i}).$
\end{lemma}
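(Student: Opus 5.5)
The plan is to iterate one-step estimates along the partition. The key structural fact is that the martingale parts of successive steps are orthogonal in $L^2$. That is, if $Y_i$ is $\mathcal F_{t_i}$-measurable, then $E_{t_i}\widetilde{\mathcal E}_{t_i,t_{i+1}}(Y_i)=0$, so cross terms with $\mathcal F_{t_i}$-measurable quantities vanish.

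\textbf{Part A.} Set $Y_0=X$ and $Y_{i+1}=\mathcal X_{t_i,t_{i+1}}(Y_i)=Y_i+\widehat{\mathcal E}_i+\widetilde{\mathcal E}_i$. Expanding the square and using orthogonality gives
\[
\|Y_{i+1}\|_2^2=\|Y_i+\widehat{\mathcal E}_i\|_2^2+\|\widetilde{\mathcal E}_i\|_2^2 .
\]
By (G) and Young's inequality, the right-hand side is at most $(1+C\Delta_i)\|Y_i\|_2^2+C\Delta_i$, where $\Delta_i=t_{i+1}-t_i$. A discrete Gronwall argument then yields $\|Y_n\|_2^2\le e^{C(t-s)}(\|X\|_2^2+C(t-s))$. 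Since $t-s\le 1$, this is (\ref{sf15}).

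\textbf{Part B.} I split the difference into two pieces:
\[
\mathcal X^\pi_{s,t}(X^1)-\mathcal X^{\pi'}_{s,t}(X^2)=\big[\mathcal X^\pi(X^1)-\mathcal X^\pi(X^2)\big]+\big[\mathcal X^\pi(X^2)-\mathcal X^{\pi'}(X^2)\big].
\]
\emph{First piece.} Apply the same orthogonality computation to $D_i=Y_i^1-Y_i^2$, using (L). The finite-variation increment has $L^2$ norm at most $C\Delta_i\|D_i\|_2$, and the martingale increment contributes at most $C\Delta_i\|D_i\|_2^2$. Hence $\|D_{i+1}\|_2^2\le(1+C\Delta_i)\|D_i\|_2^2$, and Gronwall gives $\|D_n\|_2^2\le C_T\|X^1-X^2\|_2^2$.

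\emph{Second piece.} This is a telescoping or Lindeberg-type argument. Define hybrid compositions $Z^{(k)}$ that use $\pi'$ on the first $k$ intervals and $\pi$ afterwards. Then
\[
\mathcal X^{\pi'}-\mathcal X^{\pi}=\sum_{k}\big(Z^{(k+1)}-Z^{(k)}\big).
\]
Rather than summing norms, which would lose the martingale gain, I will run a single recursion. Let $U_i$ be the $\pi$-iterate and $V_i$ the $\pi'$-iterate at time $t_i$, both starting from $X^2$. Write
\[
V_{i+1}-U_{i+1}=\big[\mathcal X_{r_i,t_{i+1}}\circ\mathcal X_{t_i,r_i}(V_i)-\mathcal X_{t_i,t_{i+1}}(V_i)\big]+\big[\mathcal X_{t_i,t_{i+1}}(V_i)-\mathcal X_{t_i,t_{i+1}}(U_i)\big].
\]
The first bracket is $\mathcal E_{t_i,r_i,t_{i+1}}(V_i)-\mathcal E_{t_i,t_{i+1}}(V_i)$. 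Decompose it into its $E_{t_i}$-part $a_i$ and its martingale part $m_i$. By (S($\beta$)) and Part A (applied to $V_i$, itself a composition), $\|a_i\|_2^2\le C(1+\|X^2\|_2^2)\Delta_i^{2+\beta}$ and $\|m_i\|_2^2\le C(1+\|X^2\|_2^2)\Delta_i^{1+\beta}$. The second bracket is handled exactly as in the first piece.

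Collecting martingale parts and using orthogonality for $E_i=V_i-U_i$ gives
\[
\|E_{i+1}\|_2^2\le \|E_i+\text{FV}\|_2^2+2\|\widetilde{\text{L}}\|_2^2+2\|m_i\|_2^2 ,
\]
where $\widetilde{\text{L}}$ is the martingale part of the Lipschitz difference and FV is the sum of $a_i$ and the finite-variation part of the Lipschitz difference. Since $\|E_i+\text{FV}\|_2\le(1+C\Delta_i)\|E_i\|_2+\|a_i\|_2$, Young's inequality in the form $(x+y)^2\le(1+\Delta_i)x^2+(1+\Delta_i^{-1})y^2$ turns the $a_i$ term into $C\Delta_i^{1+\beta}(1+\|X^2\|_2^2)$. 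Thus
\[
\|E_{i+1}\|_2^2\le(1+C\Delta_i)\|E_i\|_2^2+C(1+\|X^2\|_2^2)\Delta_i^{1+\beta}.
\]
Using $\Delta_i^{1+\beta}\le\Delta_i|\pi|^\beta$ and summing with Gronwall gives $\|E_n\|_2^2\le C_T(t-s)(1+\|X^2\|_2^2)|\pi|^\beta$. Combining with the first piece via $(a+b)^2\le2a^2+2b^2$ proves (\ref{s12}).

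The main obstacle is making sure that the $\Delta_i^{1+\beta}$ martingale error from (\ref{sl4'}) is summed in squared norm, via orthogonality, rather than in norm. Summing in norm would give $\sum\Delta_i^{(1+\beta)/2}$, which diverges as $|\pi|\to0$ when $\beta<1$. This is why everything is organized as one recursion on $\|E_i\|_2^2$, with all martingale pieces kept inside the conditional-expectation splitting at time $t_i$.
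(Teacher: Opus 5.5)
Your proof is correct and follows essentially the paper's argument: the same splitting of each step's increment into a Lipschitz part and a sewing defect, the decomposition into a conditional expectation at $t_i$ and a martingale part (controlled by orthogonality), and a discrete Gronwall argument. The differences are cosmetic. The paper runs a single recursion directly between the $\pi$-chain from $X^1$ and the $\pi'$-chain from $X^2$, and bounds the accumulated finite-variation sum by Jensen (Cauchy--Schwarz) rather than by your one-step Young inequality; you instead first separate off the Lipschitz-stability piece with the triangle inequality.
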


\begin{proof}
We take $X_{0}^{1}= X^1, X_0^2 = X^2 ,$ and we define by recurrence, for any 
$0 \le i < n,$ 
\begin{equation*}
X_{i+1}^{1}=\mathcal{X}_{t_{i},t_{i+1}}(X_{i}^{1}),\quad X_{i+1}^{2}=%
\mathcal{X}_{r_{i},t_{i+1}}(\mathcal{X}_{t_{i},r_{i}}(X_{i}^{2})).
\end{equation*}%
In particular, $\mathcal{X}_{s,t}^{\pi }(X^{1})=X_{n}^{1}$ and $\mathcal{X}%
_{s,t}^{\pi ^{\prime }}(X^{2})=X_{n}^{2}.$\ We first show that there exists
a constant $C $ such that for any $k \le n, $ 
\begin{equation}  \label{s12'}
\| X_k^1 \|_2^2 \le C (t-s) + C \| X^1 \|_2^2 , \; \| X_k^2 \|_2^2 \le C
(t-s) + C \| X^2 \|_2^2 .
\end{equation}
(\ref{sf15}) follows from this, taking $k=n.$

To prove the above inequality, we start from the decomposition 
\begin{eqnarray*}
X_k^1 &=&X^{1}+\sum_{i=0}^{k-1}(\mathcal{X}%
_{t_{i},t_{i+1}}(X_{i}^{1})-X_{i}^{1})=X^{1}+\sum_{i=0}^{k-1}\mathcal{E}^{%
\mathcal{X}}_{t_{i},t_{i+1}}(X_{i}^{1}) \\
&=& X^{1}+\sum_{i=0}^{k-1}\widehat{ \mathcal{E}^{\mathcal{X}}_{t_{i},t_{i+1}}%
}(X_{i}^{1}) + \sum_{i=0}^{k-1}\widetilde {\mathcal{E}^{\mathcal{X}%
}_{t_{i},t_{i+1}}}(X_{i}^{1}) .
\end{eqnarray*}%
By orthogonality and using the growth condition, we have that 
\[
\| \sum_{i=0}^{k-1}\widetilde {\mathcal{E}^{\mathcal{X}} _{t_{i},t_{i+1}}}%
(X_{i}^{1}) \|_2^2 = \sum_{i=0}^{k-1} \| \widetilde {\mathcal{E}^{\mathcal{%
X}}_{t_{i},t_{i+1}}}(X_{i}^{1}) \|_2^2 
\le  C \sum_{i=1}^{k-1} (t_{i+1} - t_i) ( 1 + \| X_i^1\|_2^2 ) ,
\]
where the constant $C$ does only depend on $C_G . $ Let us now consider the 
\textit{bounded-variation} part. Using Jensen's inequality and the growth
condition, we have 
\begin{eqnarray*}
\| \sum_{i=0}^{k-1}\widehat{ \mathcal{E}^{\mathcal{X}} _{t_{i},t_{i+1}}}%
(X_{i}^{1}) \|_2^2 &=& \| (t_k - t_0) \sum_{i=0}^{k-1} \frac{ t_{i+1} - t_i}{%
t_k - t_0} \frac{\widehat{ \mathcal{E}^{\mathcal{X}} _{t_{i},t_{i+1}}}%
(X_{i}^{1}) }{t_{i+1} - t_i } \|_2^2 \\
&\le& (t_k - t_0 ) \sum_{i=0}^{k-1} (t_{i+1} - t_i ) \| \frac{\widehat{ 
\mathcal{E}^{\mathcal{X}} _{t_{i},t_{i+1}}}(X_{i}^{1}) }{t_{i+1} - t_i }
\|_2^2 \\
& \le & (t_k - t_0 ) \sum_{i=0}^{k-1} (t_{i+1} - t_i ) C ( 1 + \| X_i^1
\|_2^2 ),
\end{eqnarray*}
with $C$ depending only on $C_G.$ Putting things together, we see that for
any $k \le n,$ 
\begin{equation*}
\| X_k^1\|_2^2 \le 3 \|X^1\|_2^2 + 3 C ( t_k -t_0) + 6 C \sum_{i=0}^{k-1}
(t_{i+1} - t_i ) \| X_i^1 \|_2^2 .
\end{equation*}
Using a discrete time version of Gronwall's lemma, we deduce that 
\begin{equation*}
\| X_k^1\|_2^2 \le \left( 3 \|X\|_2^2 + 3 C (t-s) \right) e^{ 6C (t-s) } ,
\end{equation*}
which, by increasing the constant if necessary, implies (\ref{s12'}) for $%
\|X_k^1 \|_2^2 $ since $(t-s) \le 1. $ The assertion for $\| X_k^2 \|_2^2 $
follows analogously.

We now prove (\ref{s12}) and start investigating the scheme associated to $%
\pi^{\prime}.$ In the partition $\pi ^{\prime }, $ to go from $i $ to $i+1, $
we perform two steps of the underlying discrete scheme, and so we have for
any $k \le n , $ 
\begin{equation*}
X_k^2 =X^{2}+\sum_{i=0}^{k-1}(\mathcal{X}_{r_{i},t_{i+1}}(\mathcal{X}%
_{t_{i},r_{i}}(X_{i}^{2}))-X_{i}^{2})=X^{2}+\sum_{i=0}^{k-1}\mathcal{E}^{%
\mathcal{X}}_{t_{i},r_{i},t_{i+1}}(X_{i}^{2}).
\end{equation*}%
We denote%
\begin{equation*}
\delta _{i}=\mathcal{E}^{\mathcal{X}}_{t_{i},t_{i+1}}(X_{i}^{1})-\mathcal{E}%
^{\mathcal{X}}_{t_{i},r_{i},t_{i+1}}(X_{i}^{2}),
\end{equation*}%
so that for every $k \le n ,$ 
\begin{equation*}
X_k^1 - X_k^2 =X^{1}-X^{2}+\sum_{i=0}^{k-1}\delta _{i}.
\end{equation*}%
Using the Lipschitz property and the sewing property together with the
estimates (\ref{s12'}), we get%
\begin{eqnarray*}
\left\Vert E_{t_i}(\delta _{i})\right\Vert _{2} &\leq &C (t_{i+1}-t_{i})
\left\Vert X_{i}^{1}-X_{i}^{2}\right\Vert _{2} + 
 C \sqrt{ (t-s) + \|X^2 \|_2^2 }
\; ( t_{i+1}-t_{i})^{1+ \beta /2}, \\
\left\Vert \widetilde{E}_{t_i}(\delta _{i})\right\Vert _{2} &\leq &C
(t_{i+1}-t_{i})^{1/2} \left\Vert X_{i}^{1}-X_{i}^{2}\right\Vert _{2} 
 + C \sqrt{ (t-s) + \|X^2 \|_2^2 
} \; ( t_{i+1}-t_{i} )^{ (1+\beta) /2}.
\end{eqnarray*}%
Using the same trick as above to obtain the estimate (\ref{s12'}), it
follows that 
\begin{eqnarray*}
\left\Vert \sum_{i=0}^{k-1}E_{t_i}(\delta _{i})\right\Vert _{2}^{2} &\leq &
(t-s) \sum_{i=0}^{k-1}(t_{i+1}-t_{i})\left\Vert
(t_{i+1}-t_{i})^{-1}E_{t_i}(\delta _{i})\right\Vert _{2}^{2} \\
&\leq & C ( t-s) \sum_{i=0}^{k-1} (t_{i+1}-t_{i}) \Big \{ \left\Vert
X_{i}^{1}-X_{i}^{2}\right\Vert _{2}^{2} \\
&& \quad \quad \quad \quad \quad \quad \quad \quad\quad + \left[ t-s + \|X^2
\|_2^2 \right] (t_{i+1}-t_{i})^{ \beta } \Big\} \\
&\leq & C \sum_{i=0}^{k-1} (t_{i+1}-t_{i}) \Big \{ \left\Vert
X_{i}^{1}-X_{i}^{2}\right\Vert _{2}^{2} \\
&& \quad \quad \quad \quad \quad \quad \quad \quad\quad \quad \quad \quad + 
\left[ 1 + \|X^2 \|_2^2 \right] (t_{i+1}-t_{i})^{ \beta } \Big\} ,
\end{eqnarray*}%
where the last inequality follows from $t-s \le 1.$ Moreover, using
orthogonality, 
\begin{eqnarray*}
\left\Vert \sum_{i=0}^{k-1}\widetilde{E}_{t_i}(\delta _{i})\right\Vert
_{2}^{2}&=&\sum_{i=0}^{k-1}\left\Vert \widetilde{E}_{t_i}(\delta
_{i})\right\Vert _{2}^{2} \\
& \leq & C \sum_{i=0}^{k-1} (t_{i+1}-t_{i}) \Big \{ \left\Vert
X_{i}^{1}-X_{i}^{2}\right\Vert _{2}^{2} + \left[ 1 + \|X^2 \|_2^2 \right]
(t_{i+1}-t_{i})^{ \beta } \Big\} .
\end{eqnarray*}%
Since $t_{i+1} -t_i \le | \pi | , $ we conclude that for any $k \le n , $ 
\begin{eqnarray*}
\left\Vert X_{k}^{1}-X_{k}^{2}\right\Vert _{2}^{2} &\le & \left\Vert
X^{1}-X^{2}\right\Vert _{2}^{2}+ C (t-s ) \left[ 1 + \|X^2 \|_2^2 \right] |
\pi | ^{\beta } \\
&&\quad \quad \quad \quad \quad\quad \quad \quad + C
\sum_{i=0}^{k-1}(t_{i+1}-t_{i}) \left\Vert X_{i}^{1}-X_{i}^{2}\right\Vert
_{2}^{2} .
\end{eqnarray*}%
Using once more a discrete time version of Gronwall's lemma, it follows that
for any $k \le n , $ 
\begin{equation*}
\left\Vert X_{k}^{1}-X_{k}^{2}\right\Vert _{2}^{2} \le e^{ C (t_k - t_0) }
\left( \left\Vert X^{1}-X^{2}\right\Vert _{2}^{2}+ C (t-s ) \left[ 1 + \|X^2
\|_2^2 \right] | \pi | ^{\beta } \right) ,
\end{equation*}
and evaluating this inequality for $k = n $ yields 
\begin{equation*}
\left\Vert \mathcal{X}_{s,t}^{\pi }(X^{1})-\mathcal{X}_{s,t}^{\pi ^{\prime
}}(X^{2})\right\Vert _{2}^{2}\leq C_{T}(\left\Vert X^{1}-X^{2}\right\Vert
_{2}^{2}+ (t-s ) \left[ 1 + \|X^2 \|_2^2 \right] \left\vert \pi \right\vert
^{\beta }),
\end{equation*}%
whence the assertion.
\end{proof}

\bigskip In the sequel, we will use particular dyadic partitions which are
defined as follows: Given a fixed $n\in {\mathbb{N}},$ we denote $t_{k}=%
\frac{k}{2^{n}},$ and given $s\geq 0$ we denote $k(s)$ the unique index such
that $t_{k(s)}\leq s<t_{k(s)+1}.$ Then for fixed $0\leq s<t$ with $t-s\leq
1, $ we consider the partition 
\begin{equation}  \label{eq:pin}
\pi _{n}(s,t)=\{s<t_{k(s)+1}<...<t_{k(t)}\le t\}
\end{equation}
of the interval $(s,t).$ We denote 
\begin{equation}  \label{eq:Xn}
\mathcal{X}_{s,t}^{n}=\mathcal{X}_{s,t}^{\pi _{n}(s,t)}=\mathcal{X}%
_{t_{k(t)},t}\circ \mathcal{X}_{t_{k(t)-1}, t_{k(t)}}\circ ....\circ 
\mathcal{X}_{s,t_{k(s)+1}}.
\end{equation}

We will now state a version of the stochastic sewing lemma given by L\^e,
which is appropriate in our specific framework. Other variants are given and
used in the literature, see e.g. \cite{Le} and \cite{ABC} .

To state the sewing lemma, we have to introduce the notion of \textbf{%
stochastic flow}.

\begin{definition}
\textbf{(stochastic flow) } Let $\mathbb{X}_{s,t}$ be an application from $%
L_{s}^{2}(H)$ to $L_{t}^{2}(H),$ for any $0\leq s<t < \infty.$ We say that $%
\mathbb{X}_{s,t}$ is a \textbf{stochastic flow} if for every $0 \le s<r<t,$
and $X\in L_{s}^{2}(H)$ we have $\mathbb{X}_{s,t}(X)=\mathbb{X}_{r,t}(%
\mathbb{X}_{s,r}(X)).$
\end{definition}

\begin{lemma}
\label{sewing}\textbf{A }Suppose that $\mathcal{X}$ verifies $\mathbf{%
A(\beta }).$ Then there exists a unique stochastic flow $\mathbb{X}$ and a
constant $C(\beta ),$ depending on the constants in $\mathbf{A(\beta }),$\
such that for all $0\leq s\leq t,$ $t-s\leq 1,$ 
\begin{equation}
E\left\vert \mathbb{X}_{s,t}(X)-\mathcal{X}_{s,t}^{n}(X)\right\vert ^{2}\leq
C(\beta )(1+E\left\vert X\right\vert ^{2})(t-s)2^{-n\beta }.  \label{sl3}
\end{equation}%
\textbf{B }Moreover, $\mathbb{X}$ verifies $\mathbf{(G)}$ and $\mathbf{(L)}.$
\newline
\textbf{C }If $\mathcal{X}$ verifies $\mathbf{A(\beta })$ and if $\mathbb{X}$
is the associated stochastic flow, then we have more generally that for all $%
0\leq s\leq t,$ $t-s\leq 1,$ and for any partition $\pi $ of $(s,t),$ 
\begin{equation}
E\left\vert \mathbb{X}_{s,t}(X)-\mathcal{X}_{s,t}^{\pi }(X)\right\vert
^{2}\leq C(\beta )(1+E\left\vert X\right\vert ^{2})(t-s)\left\vert \pi
\right\vert ^{\beta }.  \label{s14}
\end{equation}
\end{lemma}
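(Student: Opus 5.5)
The plan is to build $\mathbb{X}_{s,t}(X)$ as the $L^2$-limit of the dyadic schemes $\mathcal{X}^n_{s,t}(X)$, which form a Cauchy sequence by the unnamed key lemma above (estimate (\ref{s12})). For fixed $n$, observe that $\pi_{n+1}(s,t)$ is a proper sub-partition of $\pi_n(s,t)$. Each interval $[t_k,t_{k+1}]$ of $\pi_n(s,t)$ is split at the midpoint of the dyadic grid when that midpoint lies strictly inside; otherwise we take $r_i=t_i$ on the boundary pieces. Applying (\ref{s12}) with $X^1=X^2=X$ gives
\begin{equation*}
\|\mathcal{X}^{n}_{s,t}(X)-\mathcal{X}^{n+1}_{s,t}(X)\|_2^2\le C_T (t-s)(1+\|X\|_2^2)2^{-n\beta}.
\end{equation*}
Taking square roots and summing the geometric series $\sum_{m\ge n}2^{-m\beta/2}$ shows that $(\mathcal{X}^n_{s,t}(X))_n$ is Cauchy in $L^2_t(H)$. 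We define $\mathbb{X}_{s,t}(X)$ as its limit, and the same tail sum gives (\ref{sl3}). Measurability with respect to $\mathcal{F}_t$ is preserved under $L^2$-limits.

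To prove part \textbf{C}, compare $\mathcal{X}^\pi_{s,t}$ with the schemes on common refinements. Let $\pi^{(0)}=\pi$ and let $\pi^{(j+1)}$ be obtained from $\pi^{(j)}$ by inserting one point in each interval, say the midpoint. Then $\pi^{(j+1)}$ is a proper sub-partition of $\pi^{(j)}$ with $|\pi^{(j)}|\le 2^{-j}|\pi|$. Iterating (\ref{s12}) and summing, $\mathcal{X}^{\pi^{(j)}}_{s,t}(X)$ converges in $L^2$ to some limit $Y$ with $\|Y-\mathcal{X}^\pi_{s,t}(X)\|_2^2\le C(1+\|X\|_2^2)(t-s)|\pi|^\beta$.

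The remaining step is to identify $Y=\mathbb{X}_{s,t}(X)$, which I expect to be the main obstacle. Here I would compare two partitions with small meshes that are not nested. First take their union as a common refinement. Then pass from each partition to the union through a chain of proper sub-partitions; within one interval of $\pi$ the inserted points are added one at a time, each step applying (\ref{s12}). The difficulty is that adding many points to one interval is not a single proper sub-partition step. I would handle this by proving a one-interval estimate
\begin{equation*}
\|\mathcal{X}_{u,v}(Z)-\mathcal{X}^{\rho}_{u,v}(Z)\|\lesssim (v-u)^{1+\beta/2}\ \text{(drift part)},\qquad \lesssim (v-u)^{(1+\beta)/2}\ \text{(martingale part)}
\end{equation*}
for an arbitrary partition $\rho$ of $[u,v]$, via dyadic bisection plus (\ref{sl4})–(\ref{sl4'}). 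Then I would rerun the Gronwall argument of the key lemma with $\delta_i$ replaced by these errors.

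The flow property follows from the same identification. For $s<r<t$, concatenating partitions of $[s,r]$ and $[r,t]$ gives a partition of $[s,t]$. By \textbf{C} and the Lipschitz stability of the schemes in the initial condition (the $\|X^1-X^2\|$ term in (\ref{s12})), the limit equals $\mathbb{X}_{r,t}(\mathbb{X}_{s,r}(X))$. For uniqueness, let $\mathbb{Y}$ be another flow satisfying (\ref{sl3}). Then $\mathbb{Y}_{s,t}(X)-\mathcal{X}^n_{s,t}(X)\to0$, so $\mathbb{Y}=\mathbb{X}$.

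For \textbf{B}, the growth and Lipschitz bounds pass to the limit from the schemes. For (\ref{sf1}) and (\ref{sf1'}), write $\mathcal{X}^n_{s,t}(X)-X$ as a sum of one-step increments. Then apply the conditional-expectation/martingale decomposition with respect to $\mathcal{F}_s$: $E_s$ of the sum of martingale parts at later times vanishes, and the drift parts are controlled by Jensen as in the proof of (\ref{s12'}). Stability under $L^2$-limits of $E_s$ and $\tilde E_s$ then finishes the argument.
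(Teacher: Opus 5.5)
Your construction of $\mathbb{X}_{s,t}(X)$ as the $L^2$-limit of the dyadic schemes, the Cauchy bound and (\ref{sl3}) are exactly the paper's argument, and your handling of uniqueness and of \textbf{B} is fine. The problem is your route to the flow property and to \textbf{C}. Everything there rests on the one-interval estimate comparing $\mathcal{X}_{u,v}$ with $\mathcal{X}^{\rho}_{u,v}$ for an \emph{arbitrary} partition $\rho$ of $[u,v]$, and this estimate is only asserted.

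``Dyadic bisection plus (\ref{sl4})--(\ref{sl4'})'' does not give it as stated. Since $\rho$ has no dyadic structure, you must insert bisection points into $\rho$ level by level. If you then add the resulting errors with the triangle inequality (as in the deterministic sewing lemma, or in Young's point-removal argument), level $j$ contributes martingale errors of size $(2^{-j}(v-u))^{(1+\beta)/2}$ from up to $2^j$ cells. The resulting bound is not uniform in $\rho$ when $\beta\le 1$.

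The estimate itself is true, but a direct proof needs three further ingredients. You must combine the two halves of each cell through orthogonality of their martingale parts, track the $E_u$-part through the Lipschitz propagation, and close an induction whose constants are uniform in the depth. In effect, this reproves a stochastic sewing lemma for general partitions. Since you deduce the flow property from \textbf{C}, it inherits this gap.

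The detour is avoidable. Because $\pi_n$ is built on the global grid $k2^{-n}$, one has $\pi_n(s,r)\cup\pi_n(r,t)=\pi_n(s,t)\cup\{r\}$, which is a \emph{proper} sub-partition of $\pi_n(s,t)$. Hence (\ref{s12}) with $X^1=X^2=X$ bounds $\|\mathcal{X}^n_{s,t}(X)-\mathcal{X}^n_{r,t}(\mathcal{X}^n_{s,r}(X))\|_2^2$ by $C(t-s)(1+\|X\|_2^2)2^{-n\beta}$. Using the uniform Lipschitz stability of the schemes, you pass to the limit and get $\mathbb{X}_{s,t}=\mathbb{X}_{r,t}\circ\mathbb{X}_{s,r}$. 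This is how the paper proves the flow property, with no general-partition estimate.

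With the flow property in hand, \textbf{C} follows by writing $\mathbb{X}_{s,t}$ as the composition of the maps $\mathbb{X}_{t_i,t_{i+1}}$ along $\pi$ and rerunning the Gronwall argument of the key lemma against $\mathcal{X}^{\pi}_{s,t}$. The only one-interval input needed is the comparison of $\mathbb{X}_{u,v}$ with the \emph{single} step $\mathcal{X}_{u,v}$. Its $E_u$-part is of order $(v-u)^{1+\beta/2}$ and its total is of order $(v-u)^{(1+\beta)/2}$, both times $1+\|Z\|_2$.

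This comparison comes straight out of the dyadic construction on $[u,v]$. Start at the level where $\pi_n(u,v)$ has at most one interior point, so that (\ref{sl4})--(\ref{sl4'}) apply directly, and then sum the successive refinements. For the $E_u$-part, note that the proof of (\ref{s12}) also yields $\|E_u(\mathcal{X}^{\pi}_{u,v}(Z)-\mathcal{X}^{\pi'}_{u,v}(Z))\|_2\le C(1+\|Z\|_2)(v-u)|\pi|^{\beta/2}$. This is the argument of Theorem 2.10 in Alfonsi et al.\ to which the paper refers for \textbf{C}, and it makes your auxiliary limit $Y$ unnecessary.
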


\begin{definition}
\bigskip We call $\mathbb{X}$ the ``flow solution" associated to $\mathcal{X}%
. $
\end{definition}

\begin{proof}
We fix $s<t$ and we will prove that the sequence of Euler schemes $\mathcal{X%
}_{s,t}^{n}(X),n\in {\mathbb{N}},$ is Cauchy in $L_{t}^{2},$ and then we
define $\mathbb{X}_{s,t}(X)$ as the limit. Notice that $\pi _{n+1}(s,t)$ is
a proper sub-partition of $\pi _{n}(s,t).$ Using the previous lemma and
applying (\ref{s12}) with $X^{1}=X^{2}=X$ gives 
\begin{equation*}
\left\Vert \mathcal{X}_{s,t}^{n}(X)-\mathcal{X}_{s,t}^{n+1}(X)\right\Vert
_{2}^{2}\leq C(t-s)(1+\Vert X\Vert _{2}^{2})2^{-\beta n}.
\end{equation*}%
Fix some $n_{0}.$ The above inequality holds for every $n\geq n_{0},$ such
that for every $n_{0}\leq n\leq m ,$ 
\begin{eqnarray}
\left\Vert \mathcal{X}_{s,t}^{n}(X)-\mathcal{X}_{s,t}^{m}(X)\right\Vert _{2}
&\leq &\sum_{k=n}^{m}\left\Vert \mathcal{X}_{s,t}^{k}(X)-\mathcal{X}%
_{s,t}^{k+1}(X)\right\Vert _{2}  \notag  \label{sl1} \\
&\leq &C\sqrt{t-s}\sqrt{1+\Vert X\Vert _{2}^{2}}\;\sum_{k=n}^{m}2^{-(\beta
/2)k}  \notag \\
&\leq &C\sqrt{t-s}\sqrt{1+\Vert X\Vert _{2}^{2}}\;2^{-(\beta /2)n}
\end{eqnarray}%
(recall that the value of a constant is allowed to change from occurrence to
occurrence). So we have a Cauchy sequence in $L_{t}^{2},$ and we may define $%
\mathbb{X}_{s,t}(X): =\lim_{n}\mathcal{X}_{s,t}^{n}(X).$

Fix now some $s<r<t.$ By considering the dyadic partitions $\pi_n ( s, r)$
and $\pi_n ( r, t), $ we may define 
\begin{equation*}
\delta _{s,r,t}(\mathcal{X}^{n}(X))=\mathcal{X}_{s,t}^{n}(X)-\mathcal{X}%
_{r,t}^{n}(\mathcal{X}_{s,r}^{n}(X)).
\end{equation*}%
Since the concatenation of $\pi_n ( s, r)$ and $\pi_n ( r, t)$ is a proper
sub-partition of $\pi_n ( s,t), $ using (\ref{s12}) and passing to the
limit, we obtain 
\begin{equation*}
E(\left\vert \delta _{s,r,t}(\mathbb{X}(X))\right\vert
^{2})=\lim_{n}E(\left\vert \delta _{s,r,t}(\mathcal{X}^{n}(X))\right\vert
^{2})=0.
\end{equation*}%
As a consequence, $\mathbb{X}_{s,t}$ has the flow property. 

Following the arguments in \cite{ABC}, it is straightforward to show that
for all $s<t,$ $\mathbb{X}_{s,t} $ also satisfies (\ref{s12'}) as well as
the Lipschitz property 
\begin{equation*}
\left\Vert \mathbb{X}_{s,t}(X)-\mathbb{X}_{s,t}(Y)\right\Vert _{2}^{2}\leq
C_{T}^{\prime }\left\Vert X-Y\right\Vert _{2}^{2}.
\end{equation*}

Finally, the proof of \textbf{C } is a straightforward adaptation of the
arguments of the proof of Theorem 2.10 in \cite{ABC}.
\end{proof}

\section{\protect\bigskip Particle systems}

\label{sec:4}

\subsection{Framework}

\label{sec:frameworkgeneral} To represent the processes we are interested in
we fix a generic probability space $(\Omega ,\mathcal{F},P)$ equipped with a
filtration $(\mathcal{F}_{t})_{t\geq 0}.$ We also fix some $d_{0}\leq d$ and
we assume that on $\Omega $ is defined, for any $s \geq 0, $ a $d_{0}-$%
dimensional process $W_{s,t}=(W_{s,t}^{1},...,W_{s,t}^{d_{0}}), t \geq s, $
such that $W_{s,t } $ is centered and $\mathcal{F}_t-$measurable for any $t
\geq s.$ In the main example discussed in the present paper, we will choose $%
W_{s,t} = W_t- W_s,$ where $(W_t)_{t \geq 0} $ is a standard Brownian motion. 

We denote by $\mathcal{F}_{t}^{W}=\sigma (W_{s,r}, 0 \le s\leq r\leq t),$
for any $t\geq 0.$ $W$ will be a common noise underlying each particle's
evolution within an associated limit particle system. We recall that $%
1_{d_{0}}w\in {\mathbb{R}}^{d}$ denotes the vector $1_{d_{0}}w=(0,\ldots
,0,w^{1},...,w^{d_{0}}).$

We also consider a family of filtrations $(\mathcal{G}^1_{s,t})_{t\geq s\geq
0}, (\mathcal{G}^2_{s,t})_{t \geq s \geq 0}$ such that for any fixed $s, $ $(%
\mathcal{G}^i_{s,t})_{ t \geq s}, i=1, 2, $ is a filtration, and such that
moreover the following properties are satisfied.

\begin{enumerate}
\item $\mathcal{G}^i_{s,t}, i= 1, 2,$ are independent.

\item $\mathcal{G}^i_{s,t}, i= 1, 2,$ are both independent of $\mathcal{F}%
_{s}$ and of $\mathcal{F}_{t}^{W}.$

\item $\mathcal{G}^i_{s,t}, i= 1, 2,$ are both included in $\mathcal{F}_{t}.$
\end{enumerate}

We think of $\mathcal{G}_{s,t}^{1}$ and $\mathcal{G}_{s,t}^{2}$ as
representing two sources of \textit{innovation} within the period $(s,t),$
which are independent of the past $\mathcal{F}_{s}$ and independent of the
process $W.$ The first source of innovation, which is $\mathcal{G}_{s,t}^{1}-
$adapted, will be related to an autonomous evolution of the different
particles, see (\ref{eq:H}) below, while the second source of innovation,
which is $\mathcal{G}_{s,t}^{2}-$adapted, will be a perturbation term to
which each particle is equally exposed, see (\ref{pf4}) below.

Recall that $L_{t}^{2}({\mathbb{R}}^{d\times N})$ denotes the space of $%
\mathcal{F}_{t}-$\ measurable, square integrable ${\mathbb{R}}^{d\times N}-$%
valued random variables.

In what follows, $d$ and $d_0$ will be fixed and $N , $ the number of
particles, will tend to infinity. Therefore, we consider on ${\mathbb{R}}%
^{d\times N}$ the following norm: Writing any element $x \in {\mathbb{R}}^{
d \times N} $ as $x =(x^{1},...,x^{N})$ with $x^{i}\in {\mathbb{R}}^{d},$ we
define 
\begin{equation*}
\left\vert x\right\vert _{N}^{2}=\frac{1}{N}\sum_{i=1}^{N}\left\vert
x^{i}\right\vert ^{2}.
\end{equation*}%
We denote by $\mathcal{P}_{2}({\mathbb{R}}^{d})$ ($\mathcal{P}_{2}({\mathbb{R%
}}^{d_0}),$ respectively) the space of all probability measures on ${\mathbb{%
R}}^{d}$ (on ${\mathbb{R}}^{d_0}, $ respectively) having a finite second
moment, and by $\mathcal{P}_{2}({\mathbb{R}}^{d\times N})$ the space of all
probability measures $\mu $ on ${\mathbb{R}}^{d\times N}$ such that $\int_{{%
\mathbb{R}}^{d\times N}}\left\vert x\right\vert _{N}^{2}\mu (dx)<\infty .$
On $\mathcal{P}_{2}({\mathbb{R}}^{d\times N}),$ we consider the associated $%
\mathcal{W}_{2}-$ Wasserstein distance given by 
\begin{equation*}
\mathcal{W}_{2}(\mu ,\nu )=\inf_{\Pi }\left \{\int_{{\mathbb{R}}^{d\times
N}}\int_{{\mathbb{R}}^{d\times N}}\left\vert x-y\right\vert _{N}^{2}\Pi
(dx,dy)\right\} ,
\end{equation*}%
where the infimum is taken over all probability measures on ${\mathbb{R}}%
^{d\times N}\times {\mathbb{R}}^{d\times N}$ having marginals $\mu $ and $%
\nu .$ We keep the notation $W_{2}$ (using the non-calligraphic letter $W_2$%
) for the classical Wasserstein distance on $\mathcal{P}_{2}({\mathbb{R}}%
^{d})$ given by 
\begin{equation*}
W_{2}(\mu ,\nu )=\inf_{\Pi }\left\{\int_{{\mathbb{R}}^{d}}\int_{{\mathbb{R}}%
^{d}}\left\vert x-y\right\vert _{N}^{2}\Pi (dx,dy) \right\} ,
\end{equation*}%
where $\Pi $ has marginals $\mu $ and $\nu .$

Finally, for two ${\mathbb{R}}^{d}-$valued square integrable random
variables $X$ and $Y,$ by abuse of notation, we denote 
\begin{equation*}
W_{2}(X,Y)=W_{2}(\mathcal{L}(X),\mathcal{L}(Y)),
\end{equation*}%
and similarly, for two ${\mathbb{R}}^{d\times N}-$ valued square integrable
random variables $X$ and $Y,$ 
\begin{equation*}
\mathcal{W}_{2}(X,Y)=\mathcal{W}_{2}(\mathcal{L}(X),\mathcal{L}(Y)).
\end{equation*}%
For $X\in L^{2}({\mathbb{R}}^{d\times N}),$ we denote 
\begin{equation*}
E_{s}(X)=E(X\mid \mathcal{F}_{s})\quad \mbox{and}\quad \quad \widetilde{E}%
_{s}(X)=X-E_{s}(X).
\end{equation*}%
Finally we define%
\begin{equation*}
\mu_{N}(x,dz):=\frac{1}{N}\sum_{i=1}^{N}\delta _{x^{i}}(dz), \,
x=(x^{1},...,x^{N}) \in {\mathbb{R}}^{ d \times N}, z \in {\mathbb{R}}^d,
\end{equation*}%
and for a function $\varphi :{\mathbb{R}}^{d}\rightarrow {\mathbb{R}}^{m},$
we denote%
\begin{equation*}
\mu _{N}(x,\varphi )=\int \varphi (z)\mu_{N}(x,dz)=\frac{1}{N}%
\sum_{i=1}^{N}\varphi (x^{i}).
\end{equation*}

We introduce now the core objects of our paper.

$\clubsuit $ \qquad \textbf{Autonomous innovation.} For every $0 \le s<t, $
we consider $H_{s,t}^{i}:\Omega \times {\mathbb{R}}^{d}\times {\mathbb{R}}%
^{m}\rightarrow {\mathbb{R}}^{d},i=1,...,N,$ which are $\mathcal{G}^1_{s,t}
\otimes \mathcal{B} ( {\mathbb{R}}^d \times {\mathbb{R}}^m) - \mathcal{B} ( {%
\mathbb{R}}^d )-$measurable and satisfy 
\begin{equation}  \label{eq:H}
H_{s,t}^{i}( X,a)\in L_{t}^{2}({\mathbb{R}}^{d}) \mbox{ whenever  } ~X\in
L_{s}^{2}({\mathbb{R}}^{d}) \mbox{ and  } a\in L_{s}^{2}({\mathbb{R}}^{m}) .
\end{equation}%
We put $H_{s,t}=(H_{s,t}^{1},...,H_{s,t}^{N})\in {\mathbb{R}}^{d\times N}.$
Since $\mathcal{G}^1_{s,t} $ is independent of $\mathcal{F}_t^W, $ for any
fixed $x, a, $ $H_{s, t }^i (x , a ) $ is independent of $W.$

We impose the following assumptions on $ H_{s, t}$ which are natural and 
straightforward to check in most examples. 

\begin{assumption}
\label{ass:H}

\begin{enumerate}
\item If $X^{1},...,X^{N} \in L_{s}^{2}({\mathbb{R}}^{d})$ are i.i.d. and if 
$a\in {\mathbb{R}}^{m}$ is deterministic, then $H_{s,t}^{i}(X^{i},a),$ $%
i=1,...,N,$ are i.i.d.

\item We assume that there exists a constant $C_{stb}$\ such that, almost
surely, for every $x,y\in {\mathbb{R}}^{d},a,b\in {\mathbb{R}}^{m},$ 
\begin{equation}
\left\vert E_s\left(H_{s,t}^{i}(x,a)-H_{s,t}^{i}(y,b)\right)\right\vert \leq
C_{stb}(t-s)(\left\vert x-y\right\vert +\left\vert a-b\right\vert ).
\label{pf1}
\end{equation}%
Moreover, for every $X^i,Y^i\in L_{s}^{2}({\mathbb{R}}^{d}),A,B\in L_{s}^{2}(%
{\mathbb{R}}^{m}),$%
\begin{equation}
E\left\vert \widetilde{E}_{s}(H_{s,t}^{i}(X^i,A)-H_{s,t}^{i}(Y^i,B))\right%
\vert ^{2}\leq C^2_{stb}(t-s)(E\left\vert X^i-Y^i\right\vert
^{2}+E\left\vert A-B\right\vert ^{2}) .  \label{pf1'}
\end{equation}
\end{enumerate}
\end{assumption}

\begin{example}
Let $N^i, 1 \le i \le N, $ be independent Poisson random measures on ${%
\mathbb{R}}_+ \times {\mathbb{R}}^d $ having intensity $dt \nu ( dv) $ where 
$\nu $ is a $\sigma-$finite measure having a finite second moment. Let
moreover $B^i, 1 \le i \le N, $ be independent $d-$dimensional Brownian
motions, independent of the Poisson random measures. Let $\psi : {\mathbb{R}}%
^d \times {\mathbb{R}}^m \to {\mathbb{R}}^d $ and $\sigma : {\mathbb{R}}^d
\to {\mathbb{R}}^{d \times d } $ be measurable and consider 
\begin{equation*}
H_{s,t}^{i}(x,a)=\psi (x,a)(t-s)+\int_{] s, t ] } \int_{{\mathbb{R}}^d
}vN^{i}(dr,dv)+\sigma (x)(B_{t}^{i}-B_{s}^{i}).
\end{equation*}
\end{example}

Suppose now that  $X = ( X^1, \ldots, X^N),$ with $X^i \in L^2_s ( {\mathbb{R}}^d) $
for any $i.$ We \ fix a bounded Lipschitz continuous function $\varphi :{%
\mathbb{R}}^{d}\rightarrow {\mathbb{R}}^{m}$ and we define $h_{s,t}^{i}
:L_{s}^{2}({\mathbb{R}}^{d\times N})\rightarrow {\mathbb{R}}^{d},i=1,...,N,$
by 
\begin{equation}
h_{s,t}^{i}(X) =H_{s,t}^{i}(X^{i},\mu_{N}(X, \varphi ))=H_{s,t}^{i}(X^{i},%
\frac{1}{N}\sum_{j=1}^{N}\varphi (X^{j})) .  \label{pf2}
\end{equation}

In the sequel, we will mostly consider the situation when $(X^{1},\ldots
,X^{N})$are conditionally independent, conditionally on $\mathcal{F}_{s}^{W}.
$ In this case, we also introduce $\widehat{h}_{s,t}^{i}:L_{s}^{2}({\mathbb{R%
}}^{d\times N})\rightarrow {\mathbb{R}}^{d},i=1,...,N,$ by 
\begin{equation}
\widehat{h}_{s,t}^{i}(X)=H_{s,t}^{i}(X^{i},E(\varphi (X^{i})|\mathcal{F}%
_{s}^{W})).  \label{pf3}
\end{equation}%
We denote $h(X)=(h^{1}(X),...,h^{N}(X))\in {\mathbb{R}}^{d\times N},$ and $%
\widehat{h}(X)=(\widehat{h}^{1}(X),...,\widehat{h}^{N}(X)).$ Notice that if $%
X = (X^1, \ldots, X^N) $ is exchangeable, then both $h(X) $ and $\widehat{h}
( X )$ are exchangeable, too.

$\clubsuit $ \qquad \textbf{Perturbations. } For every $0 \le s < t, $ we
consider random fields 
\begin{equation}  \label{pf4}
S_{s,t}^{N}(x) = S_{s,t}^{N}(x,\omega ) \mbox{ and  } \overline{S}_{s, t }^N
( x ) = \overline{S}_{s,t}^{N}(x,\omega ) \in {\mathbb{R}}^{d_{0}},\;
x=(x^{1},...,x^{N})\in {\mathbb{R}}^{d\times N}, \omega \in \Omega ,
\end{equation}
satisfying the following assumption.

\begin{assumption}
\label{ass:perturbation} i) For any $0 \le s < t, $ we have that 
\begin{equation*}
\Omega \times {\mathbb{R}}^{d \times N }\ni (\omega ,x )\mapsto S_{s, t}^N
(\omega ,x)\in {\mathbb{R}}^{d_{0}}
\end{equation*}%
is $\mathcal{G}_{s,t}^{2}\otimes \mathcal{B}({\mathbb{R}}^{d \times N })-%
\mathcal{B}({\mathbb{R}}^{d_0})-$ measurable and 
\begin{equation*}
\Omega \times {\mathbb{R}}^{d \times N }\ni (\omega ,x )\mapsto \overline
S_{s, t}^N (\omega ,x)\in {\mathbb{R}}^{d_{0}}
\end{equation*}%
is $\mathcal{F}^W_{t}\otimes \mathcal{B}({\mathbb{R}}^{d \times N })-%
\mathcal{B}({\mathbb{R}}^{d_0})-$ measurable. Moreover, for any fixed $x,$ $%
\overline S_{s, t}^N (\omega ,x) $ is independent of $\mathcal{F}_s^W.$ 

ii) For any fixed $x\in {\mathbb{R}}^{d\times N},$ the random variables $%
S_{s, t}^N ( x, \omega) $ and $\overline{S}_{s,t}^{N}(x,\omega ) $ 
are centered, square integrable and independent of $\mathcal{F}_{s} .$

iii) For some finite constant $M_{2}, $ we have that for all $0 \le s < t $
and for all $x \in {\mathbb{R}}^{d \times N}, $ 
\begin{equation}
E\left\vert S^N_{s,t}(x,\omega )\right\vert ^{2} + E\left\vert \overline
S^N_{s,t}(x, \omega )\right\vert ^{2} \leq M_{2}| t-s| .  \label{MD0}
\end{equation}
iv) \textbf{$L^2-$continuity and Lipschitz property.} We have that 
\begin{equation}
E\left\vert \overline{S}_{s,t}^{N}(x,\omega )-\overline{S}_{s,t}^{N}(%
\overline{x},\omega )\right\vert ^{2}\leq \overline{L}(t-s) |x - \bar x
|_N^2.  \label{Aiii}
\end{equation}
v) \textbf{Coupling.} For all $0 \le s \le t, $ and for all $x \in {\mathbb{%
R}}^{d \times N}, $ we have that 
\begin{equation}
W_{2}^{2}(S_{s,t}^{N}(x,\omega ),\overline{S}_{s,t}^{N}(x,\omega ))\leq (t-s)%
\overline{\varepsilon }_{N} ,  \label{Aiv}
\end{equation}%
where $\lim_{ N \to \infty } \bar \varepsilon_N = 0.$
\end{assumption}
Items $i)$ to $iv)$ of this assumption are natural and easy to check in most examples.
The fact that the moments of order two scale as a factor of $t-s$  means that $S_{s,t}^{N}$
and $\overline{S}_{s, t }^N$ have to be thought of as being increments of square integrable martingales.
In our main example, the passage from $S_{s,t}^{N}(x,\omega )$ to $ \overline{S}_{s,t}^{N}(x,\omega )$ is
a consequence of the CLT, and in this case we have $ \bar \varepsilon_N = N^{ - 1 }, $ see Section \ref{sec:CLT} below. 
In a general framework, condition (\ref{Aiv}) has to be checked in a case by case
study, depending on the precise model assumptions. 

Here is an example illustrating the CLT framework.

\begin{example}
\label{exa:CLT} Let $B^{1},\ldots ,B^{N}$ are independent $d_{0}-$%
dimensional Brownian motions and put 
\begin{equation*}
S_{s,t}^{N}(x)=\frac{1}{\sqrt{N}}\sum_{i=1}^{N} \sqrt{f(x^{i})}%
(B_{t}^{i}-B_{s}^{i}),
\end{equation*}%
where $f$ is a positive definite $d_{0}\times d_{0}$ matrix and where $\sqrt{%
f}$ denotes its matrix square root.
\end{example}

In Example \ref{exa:CLT}, the limit common noise is given by Brownian motion, and this framework will be discussed in detail in Section 
\ref{sec:CLT} below. 

In what follows we state our main results in the general setting of the present section to provide a flexible framework capable to cover 
other situations and in order to streamline the main arguments.

The following assumption will allow us to perform the transition $ \bar X^N \to \hat X^N$ mentioned in the introduction, that is, to
compare the particle system with an infinite exchangeable limit system.

\begin{assumption}
\label{ass:perturbationbis} i) \textbf{Representation.} There exists some
measurable function $A_N : {\mathbb{R}}^{ d \times N} \to {\mathbb{R}}^{ d_0
\times d_0} $ such that 
\begin{equation}
\overline{S}_{s,t}^{N}(x,\omega )= A_N ( x) W_{s,t} .  \label{Avi}
\end{equation}
ii) \textbf{Conditional law of large numbers.} There exists a continuous and
bounded function $f : {\mathbb{R}}^{d} \to {\mathbb{R}}^{d_0 \times d_0},$
taking values in the set of symmetric positive definite $d_0 \times d_0-$%
matrices, and some exponent $\kappa > 0 , $ such that the
following holds: If $X = ( X^{1},\ldots ,X^{N}) \in L^2_s( {\mathbb{R}}^{ d
\times N} ) $ are conditionally independent, conditionally on $\mathcal{F}%
_{s}^{W},$ then we have almost sure convergence, as $ N \to \infty, $ 
\begin{equation*}
 A_N ( X) \to ({E} (f ( X^1) | \mathcal{F}_s^W ))^\kappa.
\end{equation*}
Moreover, putting 
\begin{equation}
\widehat{S}_{s,t}^{N}(X,\omega )= (E (f ( X^1) | \mathcal{F}_s^W
))^\kappa W_{s, t},  \label{Avibis}
\end{equation}
we have that 
\begin{equation}
E\left\vert \widehat{S}_{s,t}^{N}(X,\omega )-\overline{S}_{s,t}^{N}(X,\omega
)\right\vert ^{2}\leq (t-s)\widehat{\varepsilon }_{N} ,  \label{Av}
\end{equation}%
where $\lim_{ N \to \infty } \widehat \varepsilon_N = 0.$\newline
\end{assumption}

\begin{remark}
Notice that $\widehat{S}_{s,t}^{N}(X,\omega ) $ is not a function of $X (
\omega); $ it is only a function of its conditional law $\mathcal{L }( X^1 | \mathcal{F}_s^W).$
\end{remark}

\begin{example}
In the framework of Example \ref{exa:CLT}, we have that  
\begin{equation*}
\overline{S}_{s, t }^N ( x ) = \sqrt{ \mu_N ( x, f) } (W_t- W_s),
\end{equation*}
where $\sqrt{ \cdot } $ denotes the matrix square root of 
symmetric positive definite $d_0 \times d-0-$matrix. In particular, the above assumption holds with $\kappa = 1/2,$ and 
$$ \widehat{S}_{s,t}^{N}(X,\omega )= \sqrt{E (f ( X^1) | \mathcal{F}_s^W)} (W_t- W_s).$$
\end{example}

\subsection{Associated one step Euler schemes and main result}

We are now able to introduce the one step Euler schemes for the particle
systems that we will use in the sequel. For any exchangeable $%
X=(X^{1},...,X^{N})\in {\mathbb{R}}^{d\times N}$ with $X^{i}\in L_{s}^{2}({%
\mathbb{R}}^{d})$ for any $1\leq i\leq N,$ we introduce 
\begin{eqnarray}
\mathcal{X}_{s,t}^{N,i}(X)
&=&X^{i}+h_{s,t}^{i}(X)+1_{d_{0}}S_{s,t}^{N}(X),\quad i=1,...,N,  \label{pf7}
\\
\overline{\mathcal{X}}_{s,t}^{N,i}(X) &=&X^{i}+h_{s,t}^{i}(X)+1_{d_{0}}%
\overline{S}_{s,t}^{N}(X),\quad i=1,...,N,  \label{pf8} \\
\widehat{\mathcal{X}}_{s,t}^{N,i}(X) &=&X^{i}+\widehat{h}%
_{s,t}^{i}(X^{i})+1_{d_{0}}\widehat{S}_{s,t}(X) ,\quad i=1,...,N.
\label{pf9}
\end{eqnarray}%
The fact that we use $1_{d_{0}}$ implies that the input given by the
perturbations acts only on the last $d_{0}$ components and is null on the
first $d-d_{0}$ components.

\begin{remark}
Let us suppose that $X^1, \ldots, X^N$ are conditionally independent,
conditionally on $\mathcal{F}_s^W.$ In this case, $\widehat{\mathcal{X}}%
_{s,t}^{N,i}(X)=\widehat{\mathcal{X}}_{s,t}^{N,i}(X^{i})$ does only depend
on the $i-$th coordinate of the vector $X=(X^{1},\ldots ,X^{N}),$ and the $N$
coordinates of $\widehat{\mathcal{X}}_{s,t}^{N}(X)$ are also conditionally
independent copies of the same dynamic, conditionally on $\mathcal{F}%
_{t}^{W}.$ This is the usual \textbf{synchronous coupling} trick often used
to obtain a quantified rate of convergence in the convergence to mean field
limits. This fact is crucial in our construction.
\end{remark}

We now fix some $n \in {\mathbb{N}} $ and for any $0 \le s < t < \infty, $
we consider the associated dyadic partitions $\pi_n ( s, t) = \{ s < t_{k(s)
+ 1 } < \ldots < t_{k(t) } \le t \} $ as before (recall (\ref{eq:pin})),
where $t_{k}=\frac{k}{2^{n}}.$ We then define 
\begin{multline}
X_{s, t }^{n,N} =\mathcal{X}_{t_{k(t) },t }^{N} \circ \cdots \circ \mathcal{X%
}_{s, t_{k(s) +1} }^{N} ,\quad \overline{X}_{s, t }^{n,N}=\overline{\mathcal{%
X}}_{t_{k(t) },t }^{N} \circ \cdots \circ \overline{\mathcal{X}}_{s, t_{k(s)
+1} }^{N} , \\
\widehat{X}_{s, t}^{n,N}=\widehat{\mathcal{X}}_{t_{k(t) },t }^N \circ \cdots
\circ \widehat{\mathcal{X}}_{s, t_{k(s) +1} }^N .  \label{pf13}
\end{multline}

Our main result now reads as follows.

\begin{theorem}
\label{theo:main} \textbf{A (Euler) }Suppose that Assumptions \ref{ass:H}, %
\ref{ass:perturbation} and \ref{ass:perturbationbis} hold. Let $0\leq s\leq
t\leq T,$ let $X_{s}^{1},...,X_{s}^{N}\in L_{s}^{2} ( {\mathbb{R}}^d) $ be
i.i.d., conditionally on $\mathcal{F}_{s}^{W},$ and write $%
X_{s}=(X_{s}^{1},\ldots ,X_{s}^{N}).$ Then there exists a constant $Q $ that
depends only on $C_{stb},L_{\varphi }$ and on $\overline{L}, $ such that 
\begin{equation}
\mathcal{W}_{2}^{2}(X_{s,t}^{n,N}(X_{s}),\widehat{X}_{s,t}^{n,N}(X_{s}))\leq
e^{QT^{2}}(\frac{1}{N}+\overline{\varepsilon }_{N}+\widehat{\varepsilon }%
_{N}) .  \label{pf10}
\end{equation}%
\newline
\textbf{B (Flow) }Assume additionally that $\mathbf{A(\beta )}$ of (\ref%
{Abeta}) holds for some $\beta >0,$ for $\mathcal{X}_{s,t}^{N}$ (for $%
\widehat{\mathcal{X}}_{s,t}^{N},$ respectively). Consequently a flow
solution $\mathbb{X}_{s,t}^{N}$ ($\widehat{\mathbb{X}}_{s,t}^{N},$
respectively) exists and is unique. Supposing that $X_{s}^{1},...,X_{s}^{N}%
\in L_{s}^{2}$ are i.i.d., conditionally on $\mathcal{F}_{s}^{W}$, we then
have for each $0 \le s \le t\leq T,$ 
\begin{equation}
\mathcal{W}_{2}^{2}(\mathbb{X}_{s,t}^{N}(X_{s}),\widehat{\mathbb{X}}%
_{s,t}^{N}(X_{s}))\leq e^{QT^{2}}(\frac{1}{N}+\overline{\varepsilon }_{N}+%
\widehat{\varepsilon }_{N}).  \label{pf12}
\end{equation}
\end{theorem}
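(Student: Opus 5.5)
The plan is to construct an explicit coupling step by step along the dyadic partition and run a discrete Gronwall argument on $\Delta_k := E|X_k - \widehat X_k|_N^2$, where $X_k$ and $\widehat X_k$ denote the two Euler schemes at the grid point $t_k$. Both schemes start from the same $X_s$. We split through the intermediate scheme $\overline X^{n,N}$ as announced in the introduction, but we do this \emph{inside each step} rather than globally, so that errors are propagated only once.

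\textbf{One-step coupling.} Fix a grid interval $(u,v)$ and current states $Y, \widehat Y \in L^2_u(\mathbb R^{d\times N})$, where $\widehat Y$ has conditionally i.i.d. coordinates given $\mathcal F^W_u$.

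First, we use the same autonomous innovation ($\mathcal G^1$) for both systems. Conditionally on $\mathcal F_u$, we then couple $S^N_{u,v}(Y)$ with $\overline S^N_{u,v}(Y)$ optimally. This is allowed because both are independent of $\mathcal F_u$, and $\overline S$ is $\mathcal F^W$-measurable; we realise the coupling by enlarging the probability space, using a measurable selection of optimal plans in $x$. By (\ref{Aiv}) this costs at most $(v-u)\overline\varepsilon_N$.

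Next, we decompose
\[
\mathcal X^N_{u,v}(Y)-\widehat{\mathcal X}^N_{u,v}(\widehat Y)=(Y-\widehat Y)+\big(h_{u,v}(Y)-\widehat h_{u,v}(\widehat Y)\big)+1_{d_0}\big(S^N(Y)-\overline S^N(Y)\big)+1_{d_0}\big(\overline S^N(Y)-\overline S^N(\widehat Y)\big)+1_{d_0}\big(\overline S^N(\widehat Y)-\widehat S^N(\widehat Y)\big).
\]
We split each bracket into its $E_u$ part and its $\widetilde E_u$ part.

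The perturbation terms are centered and independent of $\mathcal F_u$, so they are pure martingale increments. They contribute to the square only through their second moments and through cross terms with $\widetilde E_u h$-differences. Their second moments are $(v-u)\overline\varepsilon_N$, then $\overline L(v-u)\Delta$ by (\ref{Aiii}), then $(v-u)\widehat\varepsilon_N$ by (\ref{Av}).

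For the $h$-term, (\ref{pf1})–(\ref{pf1'}) give drift and martingale bounds in terms of $|Y^i-\widehat Y^i|$ and $|\mu_N(Y,\varphi)-E(\varphi(\widehat Y^1)|\mathcal F^W_u)|$. We bound the latter by
\[
L_\varphi |Y-\widehat Y|_N \;+\; \big|\mu_N(\widehat Y,\varphi)-E(\varphi(\widehat Y^1)|\mathcal F^W_u)\big| .
\]
The second piece has $L^2$ norm squared at most $\|\varphi\|_\infty^2/N$, by conditional independence given $\mathcal F^W_u$ (a conditional LLN variance computation). This is where the $1/N$ term comes from.

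\textbf{Summation.} Collecting the bounds yields a recursion of the form
\[
\Delta_{k+1}\le (1+Q h_n)\Delta_k + Q h_n\Big(\frac1N+\overline\varepsilon_N+\widehat\varepsilon_N\Big) + \text{(cross terms)},
\]
with $h_n=2^{-n}$. The key point is to avoid losing a factor: drift parts are bounded by $O(h_n)$ in norm and martingale parts by $O(h_n^{1/2})$. We handle this exactly as in the proof of the lemma preceding Lemma \ref{sewing}. We write $X_k-\widehat X_k$ as a telescoping sum, use orthogonality for the martingale parts, and use Jensen with weights $(t_{i+1}-t_i)/(t-s)$ for the drift parts. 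This gives
\[
\Delta_k\le Q(t-s)\sum_{i<k}h_n\Delta_i+Q(t-s)\Big(\frac1N+\overline\varepsilon_N+\widehat\varepsilon_N\Big),
\]
and the discrete Gronwall lemma then produces the factor $e^{QT^2}$. Since $\mathcal W_2^2$ is bounded by the coupling cost, part \textbf{A} follows.

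We must also check that the conditional i.i.d. property of $\widehat X$ is preserved from step to step, so that the LLN bound can be reused at every step. This follows from the synchronous coupling remark combined with Assumption \ref{ass:H}(1) applied conditionally on $W$.

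\textbf{Part B.} By Lemma \ref{sewing}(\ref{sl3}), both $E|\mathbb X^N_{s,t}(X_s)-X^{n,N}_{s,t}(X_s)|_N^2$ and the corresponding error for $\widehat{\mathbb X}^N$ are at most $C(1+E|X_s|^2)2^{-n\beta}$. Combining this with the triangle inequality for $\mathcal W_2$ and part \textbf{A}, which holds uniformly in $n$, and letting $n\to\infty$ gives (\ref{pf12}).

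\textbf{Main obstacle.} I expect the main difficulty to be the measurability and adaptedness of the stepwise optimal coupling. It must be glued so that the coupled $S$-increments remain independent of $\mathcal F_u$ and adapted for the next step, and so that orthogonality of the martingale parts across steps survives the enlargement of the filtration. I would first try to resolve this with a measurable selection of optimal couplings $x\mapsto\Pi_x$, together with an independent uniform variable at each step adjoined to $\mathcal F_v$.
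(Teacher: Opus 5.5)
Your argument is correct, but it is organised differently from the paper's.

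\textbf{How the paper argues.} The paper never builds a joint realisation of $X^{n,N}$ and $\widehat X^{n,N}$. It constructs the full intermediate chain $\overline X^{n,N}$ and concludes by the triangle inequality for $\mathcal W_2$.
\begin{itemize}
\item For $X\to\overline X$ (Lemma \ref{lem:44}, Proposition \ref{prop:Euler}) it works only with laws. At each grid step it takes an optimal coupling of the current laws of the two Markov chains, then optimal couplings of $S^N_{s,t}(x)$ with $\overline S^N_{s,t}(\bar x)$ and of $h_{s,t}(x)$ with $h_{s,t}(\bar x)$. This gives the one-step recursion (\ref{clt3}) with factor $1+Q(t-s)$. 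The cross term $2E\langle x-\bar x,E_0(h(x)-h(\bar x))\rangle$ is controlled through (\ref{pf1}) rather than Cauchy--Schwarz, so the recursion simply iterates.
\item For $\overline X\to\widehat X$ (Theorem \ref{LLNW2}) it runs the synchronous $L^2$ telescoping argument on the original space driven by the same $W$: Jensen for the drift parts, orthogonality for the martingale parts, then Gronwall. This is essentially your summation step.
\end{itemize}

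\textbf{How you argue.} You merge the two stages. You realise $S^N_{u,v}(Y)$ as $\Phi(Y,W_{u,v},U)$ jointly with the common noise, and split only the increment through $\overline S^N(Y)$. A single Gronwall then suffices, with no intermediate chain and no triangle inequality.

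The price is the construction you flag yourself. You need a measurable disintegration of $x\mapsto\Pi_x$, and you must check that the glued chain has the law of $X^{n,N}$. That check needs the $\mathcal G^1_{u,v}$-innovation to be independent of $\mathcal F_u\vee\sigma(W_{u,v})$ \emph{jointly}, which is the natural reading of the framework's independence assumptions. The paper sidesteps this by coupling only distributions, one step at a time, and by keeping its second stage on the original space. With the standard transfer via independent uniforms the construction is routine, so it is not a gap.

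\textbf{Two minor remarks.}
\begin{itemize}
\item The martingale parts contribute $\sum_{i<k}h_n\Delta_i$ with no $(t-s)$ prefactor, so your recursion should carry $Q(1+t-s)$ rather than $Q(t-s)$. The resulting factor $e^{Q(1+T)T}$ is of the same type as the constant the paper actually obtains, cf.\ $e^{Q_1(t_{k+1}+1)^2}$ in (\ref{clt5}).
\item In part B, letting $n\to\infty$ is slightly cleaner than the paper's choice $2^{-n\beta}\le N^{-1}+\overline\varepsilon_N+\widehat\varepsilon_N$. It returns exactly the constant of part A and does not need the sewing constant in (\ref{sl3}) to be controlled in $N$. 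Both versions still have to extend (\ref{sl3}) from $t-s\le 1$ to $t-s\le T$ by concatenation, using the flow and Lipschitz properties.
\end{itemize}
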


\begin{remark}
Notice that the constant $Q$ does not depend on the the step $2^{-n}$ of the
dyadic grid which is used in the approximation, and therefore the estimate
for the flow is an immediate consequence of the estimates for the Euler
scheme, just by passing to the limit.
\end{remark}

\begin{proof}
The point \textbf{A}\ will be proved in Section \ref{sec:4true} below. (\ref%
{pf12}) is obtained by combining (\ref{pf10}) with (\ref{sl3}), choosing $n$
such that $2^{-n \beta } \le N^{ - 1 } +\overline{\varepsilon }_{N}+\widehat{%
\varepsilon }_{N}.$
\end{proof}

\bigskip

\section{Proof of Theorem \protect\ref{theo:main}}

\label{sec:4true}

\subsection{Coupling of $\mathcal{X}_{s,t}^{N}(X)$ and $\overline{\mathcal{X}%
}_{s,t}^{N}(\overline{X}) $}

The following result is the key estimation of this section.

\begin{lemma}
\label{lem:44} Suppose that Assumptions \ref{ass:H} and \ref%
{ass:perturbation} hold. Let $0\leq s\leq t$ such that $t-s\leq 1,$ and
suppose that $X=(X^{1},\ldots ,X^{N})$ and $\bar{X}=(\bar{X}^{1},\ldots ,%
\bar{X}^{N})\in L_{s}^{2}({\mathbb{R}}^{d\times N}).$ Then 
\begin{equation}
\mathcal{W}_{2}^{2}(\mathcal{X}_{s,t}^{N}(X),\overline{\mathcal{X}}%
_{s,t}^{N}(\overline{X})) \leq \mathcal{W}_{2}^{2}(X,\overline{X})\left(
1+Q(t-s)\right) +2(t-s)\overline{\varepsilon }_N ,  \label{clt3}
\end{equation}
where 
\begin{equation*}
Q =C_{stb}(1+L_{\varphi }^{2})+2\overline{L}.
\end{equation*}
\end{lemma}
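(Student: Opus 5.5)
The plan is a one-step synchronous coupling. First fix an optimal coupling of $\mathcal L(X)$ and $\mathcal L(\overline X)$ for the $|\cdot|_N$-cost. Since $\mathcal{W}_2^2$ depends only on laws, we may realize $X,\overline X$ on a common space so that they are $\mathcal F_s$-measurable with $E|X-\overline X|_N^2=\mathcal W_2^2(X,\overline X)$. Enlarging the filtration if necessary preserves the independence of $\mathcal G^1_{s,t},\mathcal G^2_{s,t}$ from $\mathcal F_s$.

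We then use the \emph{same} innovation $H_{s,t}$ for both schemes. For the perturbations, we construct $S^N_{s,t}$ and $\overline S^N_{s,t}$ so that, conditionally on $\mathcal F_s$, the pair $(S^N_{s,t}(X),\overline S^N_{s,t}(X))$ is an optimal coupling for $W_2$. This is possible because, for fixed $x$, both variables are independent of $\mathcal F_s$. We take a measurable selection $x\mapsto \Pi_x$ of optimal couplings of $\mathcal L(S^N_{s,t}(x))$ and $\mathcal L(\overline S^N_{s,t}(x))$, and sample from $\Pi_{X}$ using randomness independent of $\mathcal F_s$ and of $H$. This preserves the marginal laws of $\mathcal X^N_{s,t}(X)$ and $\overline{\mathcal X}^N_{s,t}(\overline X)$, provided the joint structure (independence of $\mathcal G^1$ and $\mathcal G^2$) is respected. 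By (\ref{Aiv}) this gives $E|S^N_{s,t}(X)-\overline S^N_{s,t}(X)|^2\le (t-s)\overline\varepsilon_N$.

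Next we write the difference as $\Delta=X-\overline X+D_h+1_{d_0}D_S$. Here $D_h=h_{s,t}(X)-h_{s,t}(\overline X)$ and $D_S=S^N_{s,t}(X)-\overline S^N_{s,t}(\overline X)=[S^N_{s,t}(X)-\overline S^N_{s,t}(X)]+[\overline S^N_{s,t}(X)-\overline S^N_{s,t}(\overline X)]$. We expand $E|\Delta|_N^2$ and treat the cross terms.

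The term $E\langle X-\overline X, D_S\rangle$ vanishes, because $D_S$ is centered conditionally on $\mathcal F_s$: each piece is, by Assumption \ref{ass:perturbation} ii) together with independence from $\mathcal F_s$. The same holds for $E\langle X-\overline X,\widetilde E_s D_h\rangle$.

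For $E\langle X-\overline X,E_sD_h\rangle$ we use (\ref{pf1}) componentwise, together with $|\mu_N(X,\varphi)-\mu_N(\overline X,\varphi)|\le L_\varphi |X-\overline X|_N$. This bounds the term by $C_{stb}(t-s)(1+L_\varphi)E|X-\overline X|_N^2$. The quantity $E|D_h|_N^2$ is handled by splitting it into its $E_s$ part, of order $(t-s)^2$, and its $\widetilde E_s$ part, which (\ref{pf1'}) bounds by $C_{stb}^2(t-s)(1+L_\varphi^2)E|X-\overline X|_N^2$.

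For $E|D_S|^2$ we use $(a+b)^2\le 2a^2+2b^2$ with (\ref{Aiv}) and (\ref{Aiii}), obtaining $2(t-s)\overline\varepsilon_N+2\overline L(t-s)E|X-\overline X|_N^2$. Note that $1_{d_0}D_S$ is added to every particle, so its $|\cdot|_N$-norm equals $|D_S|$.

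The remaining cross term $E\langle D_h,1_{d_0}D_S\rangle$ is the main obstacle. Conditionally on $\mathcal F_s$, $D_h$ is $\mathcal G^1$-measurable and $D_S$ is built from $\mathcal G^2$ together with our independent coupling randomness. Given $\mathcal F_s$ these are independent and $D_S$ is centered, so the cross term vanishes; this is precisely why the coupling construction must keep the $S$-randomness independent of $H$. Collecting terms, using $t-s\le1$, and absorbing constants into $Q=C_{stb}(1+L_\varphi^2)+2\overline L$ gives (\ref{clt3}). Since the constructed pair has the correct marginals, its cost bounds the $\mathcal W_2^2$ distance.
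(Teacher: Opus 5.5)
Your proof is correct and is essentially the paper's argument. The shared steps are: an optimal coupling of the initial data; $h$- and $S$-parts kept independent given it, so that both cross terms with $D_S$ vanish; the $E_s/\widetilde E_s$ splitting with $\langle X-\overline X,E_sD_h\rangle$ bounded linearly in $t-s$ by (\ref{pf1}); and (\ref{Aiii}), (\ref{Aiv}) combined with $(a+b)^2\le 2a^2+2b^2$. There are two small variations. First, you use the synchronous coupling for $h$, which is what (\ref{pf1})--(\ref{pf1'}) control directly, whereas the paper selects optimal couplings of $h_{s,t}(x)$ and $h_{s,t}(\bar x)$. Second, for the perturbation you glue where the paper applies the $W_2$ triangle inequality to $S^N_{s,t}(x)$ and $\overline S^N_{s,t}(\bar x)$.

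Two points need tightening. For the gluing, you must draw $S^N_{s,t}(X)$ from the disintegration of $\Pi_X$ given the actual field value $\overline S^N_{s,t}(X)$, rather than sampling the pair afresh as your wording suggests. Otherwise $\overline S^N_{s,t}(X)-\overline S^N_{s,t}(\overline X)$ falls outside the scope of (\ref{Aiii}). With this construction $D_S$ is driven by $W$ plus auxiliary randomness rather than by $\mathcal G^2$; it is still independent of $\mathcal G^1$, so your cross-term argument survives. Also, the constant your bookkeeping actually produces is of the form $c\,(C_{stb}+C_{stb}^2)(1+L_\varphi^2)+2\overline L$, not $C_{stb}(1+L_\varphi^2)+2\overline L$. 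The paper's own proof has the same looseness, so the stated $Q$ is reached in both only by absorbing constants.
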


\begin{proof}
We recall that 
\begin{equation}
\mathcal{X}_{s,t}(X)=X+h_{s,t}(X)+1_{d_{0}}S_{s,t}^{N}(X),\qquad \overline{%
\mathcal{X}}_{s,t}(\overline{X})=\overline{X}+h_{s,t}(\overline{X})+1_{d_{0}}%
\overline{S}_{s,t}^{N}(\overline{X}).  \label{clt3a}
\end{equation}%
\textbf{Step 1.} Let $\pi_0 ( d x \times d \bar x ) \in \mathcal{P }( {%
\mathbb{R}}^{d \times N} \times {\mathbb{R}}^{ d \times N} ) $ be an optimal
coupling of $X$ and $\overline{X}$ such that 
\begin{equation*}
\mathcal{W}_{2}^{2}(X,\overline{X})= \int_{ {\mathbb{R}}^{ d \times N }
\times {\mathbb{R}}^{ d \times N} } | x - \overline x |_N^2 \pi_0 ( d x, d
\bar x ).
\end{equation*}
In what follows, we work conditionally on this choice and consider $x,$ $%
\bar{x}$ to be deterministic. We write $\Omega_0 = {\mathbb{R}}^{d \times N}
\times {\mathbb{R}}^{ d \times N} ,$ together with its Borel sigma-field $%
\mathcal{A}_0 = \mathcal{B}( {\mathbb{R}}^{d \times N} \times {\mathbb{R}}^{
d \times N} ) , $ and we consider the probability measure $P_0 = \pi_0 $ on $%
(\Omega_0, \mathcal{A}_0).$

\noindent \textbf{Step 2} By Corollary 5.22 of \cite{Villani}, we may choose
a measurable version of the mapping 
\begin{equation*}
\omega_0 = ( x, \bar x) \mapsto \pi_1 ( (x, \bar x ), d s \times d \bar s )
\in \mathcal{P }( {\mathbb{R}}^{d_0 } \times {\mathbb{R}}^{d_0} ) 
\end{equation*}
such that for any fixed $\omega_0 = (x, \bar x) , $ $\pi_1 ( (x, \bar x ), d
s \times d \bar s ) = \pi_1 ( \omega_0, d s \times d \bar s ) $ is the
optimal coupling of $S^N_{s,t } (x) $ and $\bar S_{s, t }^N ( \bar x ),$
that is, 
\begin{eqnarray*}
W_2^2 ( S^N_{s,t } (x), \bar S_{s, t }^N ( \bar x)) &=& \int_{ {\mathbb{R}}%
^{ d_0 } \times {\mathbb{R}}^{d_0} } | s - \bar s |^2 \pi_1 ( (x, \bar x ),
d s \times d \bar s ) \\
&=& \int_{ {\mathbb{R}}^{ d_0 } \times {\mathbb{R}}^{d_0} } | s - \bar s |^2
\pi_1 ( \omega_0, d s \times d \bar s ).
\end{eqnarray*}
We write for short 
\begin{equation*}
( s, \bar s ) = \omega_1 \in \Omega_1 = {\mathbb{R}}^{d_0 } \times {\mathbb{R%
}}^{d_0} , 
\end{equation*}
and we write $\mathcal{A}_1 = \mathcal{B }( {\mathbb{R}}^{d_0 } \times {%
\mathbb{R}}^{d_0}). $ On $( \Omega_1, \mathcal{A}_1),$ we thus consider the
probability measure 
\begin{eqnarray*}
P_1 &= & \int_{ {\mathbb{R}}^{ d \times N } \times {\mathbb{R}}^{ d \times
N} } \pi_0 ( dx \times d \bar x ) \pi_1 ( (x, \bar x ), d s \times d \bar s )
\\
&=& \int_{ \Omega_0} \pi_1 ( \omega_0, d \omega_1 ) P_0 ( d \omega_0) .
\end{eqnarray*}
By construction, $S^N_{s,t } (X, \omega ) \sim \mathcal{L }( s | P_1 ) $ and 
$\bar S^N_{s, t } ( \bar X ) \sim \mathcal{L }( \bar s | P_1), $ that is, $%
P_1$ is a coupling of $S^N_{s,t } (X, \omega ) $ and $\bar S_{s, t }^N (
\bar X, \omega).$ Using (\ref{Aiii}) and (\ref{Aiv}), we deduce that 
\begin{equation*}
\int_{ \Omega_1} | s - \bar s |^2 P_1 ( d \omega ) \le 2 (t-s)\overline{%
\varepsilon }_{N}+ 2 \overline{L}(t-s) \mathcal{W}_{2}^{2}(X,\overline{X}) .
\end{equation*}

\noindent \textbf{Step 3} Let us write $h_{s, t} ( x) = (h_{s, t}^1 (x) ,
\ldots, h_{s, t }^N (x) ) \in {\mathbb{R}}^{ d \times N}$ and introduce 
\begin{equation*}
\Omega_2 = {\mathbb{R}}^{ d \times N} \times {\mathbb{R}}^{ d \times N} ,
\end{equation*}
$\mathcal{A}_2 = \mathcal{B }( {\mathbb{R}}^{ d \times N} \times {\mathbb{R}}%
^{ d \times N}), $ $\omega_2 = ( h , \bar h).$ We use once more Corollary
5.22 of \cite{Villani} to choose a measurable version of the mapping 
\begin{equation*}
\omega_0 = ( x, \bar x) \mapsto \pi_2 ( \omega_0, d h \times d \bar h ) =
\pi_2 ( \omega_0, d \omega_2) , 
\end{equation*}
such that for any fixed $\omega_0,$ $\pi_2 ( \omega_0, d \omega_2 ) $ is the
optimal coupling of $h_{s, t} ( x) $ and of $h_{s, t } (\bar x) ,$ that is, 
\begin{equation*}
\int_{ \Omega_2} \pi_2 ( \omega_0, d h \times d \bar h ) | h - \bar h |_N^2 =%
\mathcal{W}_2^2 ( h_{s, t} ( x), h_{s, t } ( \bar x) ).
\end{equation*}

In what follows we work on the product space 
\begin{equation*}
( \Omega_0 \times \Omega_1 \times \Omega_2 , \mathcal{A}_0 \otimes \mathcal{A%
}_1 \otimes \mathcal{A}_2 , P_0 ( d \omega_0) \otimes \pi_1 ( \omega_0, d
\omega_1)\otimes \pi_2 ( \omega_0, d \omega_2)). 
\end{equation*}
We will also write $\omega = ( \omega_0, \omega_1, \omega_2) = ( (x, \bar
x), ( s, \bar s), (h, \bar j ) ) $ for any element of this product space.

Of course, this product space is different from the initial
filtered probability space $(\Omega, \mathcal{F}, (\mathcal{F}_t)_{t \geq 0}
, P). $ In our setting, the two coordinates $x, \bar x $ of $\omega_0$ are
realizations of $X, \bar X $ which are $\mathcal{F}_s-$measurable. So the
former operation $E_s $ (which was performed on the original space $(\Omega, 
\mathcal{F}, (\mathcal{F}_t)_{t \geq 0} , P) $) has to be replaced by $E_0 (
\cdot) = E ( \cdot | \mathcal{A}_0) ,$ the conditional expectation with
respect to $\mathcal{A}_0.$ This is nothing else than conditioning on $( X,
\bar X ) = ( x, \bar x) = \omega_0$ and working under the measures $\pi_1 (
\omega_0, d \omega_1) \otimes \pi_2 ( \omega_0 , d \omega_2).$ In
particular, we see that working conditionally on $\mathcal{A}_0 $ yields
independence of $\omega_1 $ and $\omega_2.$

The operation $\tilde E_0$ is defined accordingly by $\tilde E_0 ( Z ) = Z -
E_0 ( Z)$ for any random variable which is defined on this product space.

On this product space, we define%
\begin{equation*}
X_{s,t}(\omega) =x+h+1_{d_{0}}s ,\qquad \overline{X}_{s,t} (\omega) =%
\overline{x}+\overline h+1_{d_{0}}\overline{s},
\end{equation*}%
which is, by construction, a coupling of $\mathcal{X}_{s,t}(X) $ and of $%
\overline{\mathcal{X}}_{s,t}(\overline{X}).$ It follows that%
\begin{eqnarray}
\mathcal{W}_{2}^{2}(\mathcal{X}_{s,t}(X),\overline{\mathcal{X}}_{s,t}(%
\overline{X})) &\leq& E \left\vert X_{s,t}-\overline{X}_{s,t}\right\vert
_{N}^{2}  \notag  \label{clt4a} \\
& =&E\left\vert x-\overline{x}+h-\bar h\right\vert _{N}^{2}+E\left\vert
1_{d_{0}}s-1_{d_{0}}\overline{s}\right\vert _{N}^{2},
\end{eqnarray}%
the last equality being a consequence of the fact that conditionally on $%
\mathcal{A}_0, $ $s-\overline{s}$ is centered and independent of $x - \bar x
+ h -\bar h. $

We have 
\begin{eqnarray*}
E\left\vert 1_{d_{0}}s-1_{d_{0}}\overline{s}\right\vert _{N}^{2}
&=&W_{2}^{2}(S_{s,t}^{N}(X),\overline{S}_{s,t}^{N}(\overline{X})) \\
&\leq &2 (t-s)\overline{\varepsilon }_{N}+ 2 \overline{L}(t-s)\mathcal{W}%
_{2}^{2}(X,\overline{X}).
\end{eqnarray*}%
Using orthogonality, we get 
\begin{eqnarray*}
&&E\left\vert (x-\overline{x})+(h_{s,t}(x)-h_{s,t}(\overline{x}))\right\vert
_{N}^{2} \\
&=&E\left\vert (x-\overline{x})+E_{0}(h_{s,t}(x)-h_{s,t}(\overline{x}%
))\right\vert _{N}^{2}+E\left\vert \widetilde{E}_{0}(h_{s,t}(x)-h_{s,t}(%
\overline{x}))\right\vert _{N}^{2}.
\end{eqnarray*}%
Hypothesis (\ref{pf1'}) gives%
\[
E\left\vert \widetilde{E}_{0}(h_{s,t}(x)-h_{s,t}(\overline{x}))\right\vert
_{N}^{2}\leq C_{stb}(t-s)(1+L_{\varphi }^{2})E\left\vert x-\overline{x}%
\right\vert _{N}^{2} 
=C_{stb}(t-s)(1+L_{\varphi }^{2})\mathcal{W}_{2}^{2}(X,\overline{X}).
\]
We write now%
\begin{eqnarray*}
&&E\left\vert (x-\overline{x})+E_{0}(h_{s,t}(x)-h_{s,t}(\overline{x}%
))\right\vert _{N}^{2}=E\left\vert x-\overline{x}\right\vert _{N}^{2}+ \\
&&+2\frac{1}{N}\sum_{i=1}^{N}E((x^{i}-\overline{x}%
^{i})E_{0}(h_{s,t}^{i}(x)-h_{s,t}^{i}(\overline{x}))+E\left\vert
E_{0}(h_{s,t}(x)-h_{s,t}(\overline{x}))\right\vert _{N}^{2}.
\end{eqnarray*}%
We wish to upper bound the double product without using Cauchy-Schwarz
inequality. By (\ref{pf1})%
\begin{equation*}
\left\vert E_{0}(h_{s,t}^{i}(x)-h_{s,t}^{i}(\overline{x}))\right\vert \leq
C_{stb}(t-s)(\left\vert x^{i}-\overline{x}^{i}\right\vert +\left\vert \mu
_{N}(x,\varphi )-\mu _{N}(\overline{x},\varphi )\right\vert ),
\end{equation*}%
so that 
\begin{eqnarray*}
&&E((x^{i}-\overline{x}^{i})E_{0}(h_{s,t}^{i}(x)-h_{s,t}^{i}(\overline{x}))
\\
&\leq &C_{stb}(t-s)E(\left\vert x^{i}-\overline{x}^{i}\right\vert
(\left\vert x^{i}-\overline{x}^{i}\right\vert +\left\vert \mu _{N}(x,\varphi
)-\mu _{N}(\overline{x},\varphi )\right\vert ) \\
&\leq &C_{stb}(t-s)(E\left\vert x^{i}-\overline{x}^{i}\right\vert
^{2}+E(\left\vert x^{i}-\overline{x}^{i}\right\vert \left\vert \mu
_{N}(x,\varphi )-\mu _{N}(\overline{x},\varphi )\right\vert )) \\
&\leq &C_{stb}(t-s)(\frac{3}{2}E\left\vert x^{i}-\overline{x}^{i}\right\vert
^{2}+\frac{1}{2}E(\left\vert \mu _{N}(x,\varphi )-\mu _{N}(\overline{x}%
,\varphi )\right\vert ^{2})),
\end{eqnarray*}%
where we used that $2ab\leq a^{2}+b^{2}.$ But 
\begin{equation*}
E(\left\vert \mu _{N}(x,\varphi )-\mu _{N}(\overline{x},\varphi )\right\vert
^{2})=E\left( \left\vert \frac{1}{N}\sum_{i=1}^{N}\varphi (x_{i})-\varphi (%
\overline{x}_{i})\right\vert ^{2}\right) \leq L_{\varphi }^{2}E\left\vert x-%
\overline{x}\right\vert _{N}^{2}.
\end{equation*}%
Changing the constant if necessary, we conclude that 
\begin{equation*}
2\frac{1}{N}\sum_{i=1}^{N}E((x^{i}-\overline{x}%
^{i})E_{0}(h_{s,t}^{i}(x)-h_{s,t}^{i}(\overline{x}))\leq
C_{stb}(1+L_{\varphi }^{2})(t-s)E\left\vert x-\overline{x}\right\vert
_{N}^{2},
\end{equation*}%
such that 
\begin{eqnarray*}
&&E\left\vert (x-\overline{x})+E_{0}(h_{s,t}(x)-h_{s,t}(\overline{x}%
))\right\vert _{N}^{2} \\
&\leq &E\left\vert x-\overline{x}\right\vert _{N}^{2}+C_{stb}(1+L_{\varphi
}^{2})((t-s)E\left\vert x-\overline{x}\right\vert _{N}^{2}+E\left\vert
E_{0}(h_{s,t}(x)-h_{s,t}(\overline{x}))\right\vert _{N}^{2}.
\end{eqnarray*}%
Since%
\begin{equation*}
E\left\vert E_{0}(h_{s,t}(x)-h_{s,t}(\overline{x}))\right\vert _{N}^{2}\leq
C_{stb}(1+L_{\varphi }^{2})(t-s)^{2}E\left\vert x-\overline{x}\right\vert
_{N}^{2}
\end{equation*}%
and $t-s\leq 1,$ finally we get%
\begin{eqnarray*}
E\left\vert (x-\overline{x})+E_{0}(h_{s,t}(x)-h_{s,t}(\overline{x}%
))\right\vert _{N}^{2} &\leq &E\left\vert x-\overline{x}\right\vert
_{N}^{2}[1+C_{stb}(1+L_{\varphi }^{2})((t-s)] \\
&=&\mathcal{W}_{2}^{2}(X,\overline{X})[1+C_{stb}(1+L_{\varphi }^{2})((t-s)].
\end{eqnarray*}%
Putting things together we deduce that 
$$
E\left\vert x_{s,t}(x,\overline{x})-\overline{x}_{s,t}(x,\overline{x}%
)\right\vert _{N}^{2} \leq \mathcal{W}_{2}^{2}(X,\overline{X})\left(
1+[C_{stb}(1+L_{\varphi }^{2})+2\overline{L}](t-s)\right) \ +2(t-s)\overline{\varepsilon }_{N},
$$
whence the assertion.
\end{proof}

Recall that we consider the partition $\pi _{n}(s,t)$ (see (\ref{eq:pin}).
We suppose it is given by the elements $s=t_{0}<...<t_{l}=t.$ Dropping the
superscript $n,$ we associate to it the chains%
\begin{equation*}
X_{t_{k+1}}^{N}=\mathcal{X}_{t_{k},t_{k+1}}^{N}(X_{t_{k}}) \mbox{ and } 
\overline{X}_{t_{k+1}}^{N}=\overline{\mathcal{X}}_{t_{k},t_{k+1}}^{N}(%
\overline{X}_{t_{k}}),
\end{equation*}%
for any $0\leq k<l$ (see also (\ref{pf13})), with possibly different initial
configurations $X_{s}$ and $\bar{X}_{s},$ both belonging to $L_{s}^{2}.$
Then, using recursively the estimate (\ref{clt3}), we obtain

\begin{proposition}
\label{prop:Euler} Suppose that Assumptions \ref{ass:H} and \ref%
{ass:perturbation} hold. Then for all $k\leq l,$ 
\begin{equation}
\mathcal{W}_{2}^{2}(X_{t_{k}}^{N},\overline{X}_{t_{k}}^{N})\leq
e^{Q(t_{k}-t_{0})}(\mathcal{W}_{2}^{2}(X_{t_{0}}^{N},\overline{X}%
_{t_{0}}^{N})+2 (t_{k}-t_{0})\overline{\varepsilon }_{N})  \label{clt4}
\end{equation}%
with $Q$ a constant which depends on $\overline{L},L_{\varphi }$ and $%
C_{stb} $ only.
\end{proposition}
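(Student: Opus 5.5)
The proof iterates the one-step estimate of Lemma \ref{lem:44} along the partition and closes with a discrete Gronwall argument. Set $a_k=\mathcal{W}_{2}^{2}(X_{t_{k}}^{N},\overline{X}_{t_{k}}^{N})$ and $\Delta_j=t_{j+1}-t_j$. Every step of $\pi_n(s,t)$ has length at most $2^{-n}\le 1$. Both $X_{t_k}^N$ and $\overline{X}_{t_k}^N$ lie in $L^2_{t_k}(\mathbb{R}^{d\times N})$; this follows inductively from (\ref{eq:H}) and the square integrability in (\ref{MD0}). Hence Lemma \ref{lem:44} applies on $[t_k,t_{k+1}]$ with $X=X_{t_k}^N$ and $\bar X=\overline{X}_{t_k}^N$.

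Lemma \ref{lem:44} needs only square integrability and $\mathcal{F}_{t_k}$-measurability, not independence or exchangeability. Moreover, Assumptions \ref{ass:H} and \ref{ass:perturbation} hold uniformly in $s$, so the same $Q$ and $\overline{\varepsilon}_N$ work at every step. This gives the recursion
\[
a_{k+1}\le a_k(1+Q\Delta_k)+2\Delta_k\overline{\varepsilon}_N .
\]

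Next we unroll the recursion by induction on $k$:
\[
a_k\le \prod_{j<k}(1+Q\Delta_j)\,a_0+2\overline{\varepsilon}_N\sum_{j<k}\Delta_j\prod_{j<i<k}(1+Q\Delta_i).
\]
We bound each product with $1+x\le e^{x}$, so $\prod_{j<k}(1+Q\Delta_j)\le e^{Q(t_k-t_0)}$. The inner products in the sum are bounded the same way, and $\sum_{j<k}\Delta_j=t_k-t_0$. Together these give $a_k\le e^{Q(t_k-t_0)}\bigl(a_0+2(t_k-t_0)\overline{\varepsilon}_N\bigr)$, which is (\ref{clt4}).

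The argument has no real obstacle, since Lemma \ref{lem:44} already carries the coupling work. The point needing care is that $\mathcal{W}_2$ at step $k$ is a property of the laws only. Lemma \ref{lem:44} therefore has to hold for arbitrary pairs of $\mathcal{F}_{t_k}$-measurable inputs, which it does, because its proof starts from an optimal coupling of the two laws at time $t_k$. We do not need to track a single joint coupling along the whole chain.
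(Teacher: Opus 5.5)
Your proof is correct and follows the paper's route. The paper obtains (\ref{clt4}) by applying the one-step estimate (\ref{clt3}) of Lemma \ref{lem:44} recursively along $\pi_n(s,t)$. Your explicit unrolling of $a_{k+1}\le(1+Q\Delta_k)a_k+2\Delta_k\overline{\varepsilon}_N$, using $1+x\le e^{x}$, is the discrete Gronwall step the paper leaves implicit. You also correctly check that each step length is at most $2^{-n}\le 1$, so the lemma applies at every step.
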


\subsection{Analysis of the Euler scheme}

We now investigate the terms $h_{s,t}.$ As an immediate consequence of (\ref%
{pf1}), we have for every $X,Y\in L_{s}^{2}({\mathbb{R}}^{d\times N}),$ $s<t,
$ 
\begin{eqnarray}  \label{eq:Hiii}
E\left\vert E_{s}(h_{s,t}(X)-h_{s,t}(Y))\right\vert _{N}^{2} &\leq
&C_{stb}^{2}(1+L_{\varphi }^{2})(t-s)^{2}E\left\vert X-Y\right\vert _{N}^{2},
\notag \\
E\left\vert \widetilde{E}_{s}(h_{s,t}(X)-h_{s,t}(Y))\right\vert _{N}^{2}
&\leq &C_{stb}^{2}(1+L_{\varphi }^{2})(t-s)E\left\vert X-Y\right\vert
_{N}^{2}.
\end{eqnarray}%
The conditional law of large numbers implies the following statement.

\begin{proposition}
Let $X = (X^1, \ldots, X^N ) \in L^2_s ( {\mathbb{R}}^{d \times N} )$ such
that $X^1, \ldots, X^N $ are i.i.d. conditionally on $\mathcal{F}_s^W.$ Then 
\begin{eqnarray}  \label{eq:Hiv}
E\left\vert E_{s}(h_{s,t}(X)-\widehat{h}_{s,t}(X))\right\vert _{N}^{2} &\leq
&\frac{C^2_{stb}(t-s)^{2}\left\Vert \varphi \right\Vert _{\infty }^{2}}{N } ,
\notag \\
E\left\vert \widetilde{E}_{s}(h_{s,t}(X)-\widehat{h}_{s,t}(X))\right\vert
_{N}^{2} &\leq &\frac{C^2_{stb}(t-s)\left\Vert \varphi \right\Vert _{\infty
}^{2}}{N} .
\end{eqnarray}
\end{proposition}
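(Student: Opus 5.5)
We compare $h^i_{s,t}(X)=H^i_{s,t}(X^i,\mu_N(X,\varphi))$ with $\widehat h^i_{s,t}(X)=H^i_{s,t}(X^i,m_s)$, where $m_s:=E(\varphi(X^1)\mid\mathcal F_s^W)$. The two differ only in the measure argument, and both $A:=\mu_N(X,\varphi)$ and $B:=m_s$ are $\mathcal F_s$-measurable (note $\mathcal F_s^W\subset\mathcal F_s$, and conditional identical distribution gives $E(\varphi(X^i)\mid\mathcal F^W_s)=m_s$ for every $i$). So the whole estimate reduces to two steps: apply the stability bounds (\ref{pf1})--(\ref{pf1'}) with $x=y=X^i$, and then bound $E|A-B|^2$ by a conditional law of large numbers.

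\textbf{Step 1 (drift part).} By (\ref{pf1}) with the same spatial argument $X^i$, and since $H^i_{s,t}$ is $\mathcal G^1_{s,t}$-measurable and independent of $\mathcal F_s$ (so the bound holds after freezing $X^i,A,B$ by a standard freezing argument), we get almost surely
\[
|E_s(h^i_{s,t}(X)-\widehat h^i_{s,t}(X))|\le C_{stb}(t-s)\,|A-B|.
\]
Squaring, averaging over $i$ in $|\cdot|_N^2$ and taking expectations gives
\[
E|E_s(h_{s,t}(X)-\widehat h_{s,t}(X))|_N^2\le C_{stb}^2(t-s)^2E|A-B|^2.
\]

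\textbf{Step 2 (martingale part).} Apply (\ref{pf1'}) with $X^i=Y^i$, $A=\mu_N(X,\varphi)$ and $B=m_s$. Both lie in $L^2_s$ since $\varphi$ is bounded. This yields
\[
E|\widetilde E_s(h^i-\widehat h^i)|^2\le C_{stb}^2(t-s)E|A-B|^2,
\]
and averaging over $i$ gives the second inequality, up to the bound on $E|A-B|^2$.

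\textbf{Step 3 (conditional LLN).} Write
\[
A-B=\frac1N\sum_{j=1}^N\xi_j,\qquad \xi_j=\varphi(X^j)-m_s .
\]
Conditionally on $\mathcal F_s^W$ the $\xi_j$ are independent and centered. Hence the cross terms satisfy
\[
E(\xi_j\cdot\xi_k\mid\mathcal F_s^W)=E(\xi_j\mid\mathcal F^W_s)\cdot E(\xi_k\mid\mathcal F^W_s)=0\quad (j\neq k),
\]
so that
\[
E|A-B|^2=\frac1{N^2}\sum_j E|\xi_j|^2\le\frac1N E\,\mathrm{Var}(\varphi(X^1)\mid\mathcal F_s^W)\le\frac{\|\varphi\|_\infty^2}{N}.
\]
The last inequality uses that the conditional variance is at most the conditional second moment. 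Inserting this into Steps 1 and 2 gives (\ref{eq:Hiv}).

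\textbf{Main obstacle.} The delicate point is justifying the use of (\ref{pf1}) and (\ref{pf1'}) with the random, $\mathcal F_s$-measurable arguments $A$ and $B$, and making sure the conditional expectation $E_s$ acts only through the innovation $H$. This requires the independence of $\mathcal G^1_{s,t}$ from $\mathcal F_s$ together with joint measurability of $H$, via the freezing lemma. It also requires that conditional independence given $\mathcal F^W_s$ is compatible with the orthogonality computation, which holds because $m_s$ is $\mathcal F^W_s$-measurable.
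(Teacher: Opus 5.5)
Your proposal is correct and follows essentially the same route as the paper. You apply the stability bounds (\ref{pf1})--(\ref{pf1'}) with the same spatial argument $X^i$ and the two measure arguments $\mu_N(X,\varphi)$ and $E(\varphi(X^1)\mid\mathcal{F}_s^W)$, then bound $E|\mu_N(X,\varphi)-E(\varphi(X^1)\mid\mathcal{F}_s^W)|^2\le\|\varphi\|_\infty^2/N$ using the conditional orthogonality of the centered terms $\varphi(X^j)-E(\varphi(X^j)\mid\mathcal{F}_s^W)$; your freezing argument and your explicit treatment of the martingale part (which the paper only calls ``analogous'') simply make its steps more explicit.
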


\begin{proof}
Indeed, if $\mu_s = \mathcal{L} ( X^1 | \mathcal{F}_s^W )$ is the commun law
of $X^{1},...,X^{N},$ conditionally on $\mathcal{F}^W_s, $ then $\varphi
(X^{i})-\int \varphi d\mu_s ,i=1,...,N,$ are orthogonal, conditionally on $%
\mathcal{F}^W_s, $ so that\ 
\begin{eqnarray*}
&& E\left\vert E_{s}(h_{s,t}(X)-\widehat{h}_{s,t}(X))\right\vert _{N}^{2} \\
&&=\frac{1}{N}\sum_{i=1}^{N}E\left\vert E_{s}(H_{s,t}^{i}(X^{i},\mu _{N}(X,
\varphi ))-H_{s,t}^{i}(X^{i},\int \varphi d\mu_s ))\right\vert ^{2} \\
&&\leq \frac{C^2_{stb}(t-s)^{2}}{N}\sum_{i=1}^{N}E\left\vert \mu_{N}(X,
\varphi )-\int \varphi d\mu_s )\right\vert ^{2} \leq \frac{%
C^2_{stb}(t-s)^{2}\left\Vert \varphi \right\Vert _{\infty }^{2}}{N } .
\end{eqnarray*}%
The argument for $E\left\vert \widetilde{E}_{s}(h_{s,t}(X)-\widehat{h}%
_{s,t}(X))\right\vert ^{2}$ is analogous.
\end{proof}

\begin{proposition}
\label{concatenation}\textbf{A }Suppose that Assumption \ref{ass:H} holds.\
Fix some $k < l $ and let $X_{j},Y_{j}\in L_{t_{j}}^{2}({\mathbb{R}}%
^{d\times N}),$ for all $j=0,...,k.$ Then%
\begin{eqnarray}
&& E\left\vert
\sum_{j=0}^{k}(h_{t_{j},t_{j+1}}(X_{j})-h_{t_{j},t_{j+1}}(Y_{j}))\right\vert
_{N}^{2}   \notag \\
&& \quad \quad \quad \le 2C^2_{stb}(1+L^2_{\varphi
})(1+(t_{k}-t_{0}))\sum_{j=0}^{k}E\left\vert X_{j}-Y_{j}\right\vert
_{N}^{2}(t_{j+1}-t_{j}).  \label{AE5}
\end{eqnarray}%
\textbf{B }If moreover for every $j=0,...,k,$ the components $%
X_{j}^{1},...,X_{j}^{N}$ are i.i.d., conditionally on $\mathcal{F}_{t_j}^W,$
then 
\begin{equation}
E\left\vert \sum_{j=0}^{k}(h_{t_{j},t_{j+1}}(X_{j})-\widehat{h}%
_{t_{j},t_{j+1}}(X_{j}))\right\vert _{N}^{2}\leq \frac{%
2C^2_{stb}(t_{k}-t_{0})(1+t_{k}-t_{0})\left\Vert \varphi \right\Vert
_{\infty }^{2}}{N }.  \label{AE6}
\end{equation}
\end{proposition}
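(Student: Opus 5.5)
The plan is to split each increment into its predictable part and its martingale part, and then use the same "Jensen for the drift part, orthogonality for the martingale part" device already used in the proof of the key estimate preceding Lemma \ref{sewing}. Write
\[
\Delta_j = h_{t_j,t_{j+1}}(X_j)-h_{t_j,t_{j+1}}(Y_j) = E_{t_j}(\Delta_j)+\widetilde{E}_{t_j}(\Delta_j),
\]
and use $|a+b|_N^2\le 2|a|_N^2+2|b|_N^2$ to bound the sum by twice the drift sum plus twice the martingale sum.

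\textbf{Drift part.} Normalize by the step sizes and apply Jensen's inequality with weights $(t_{j+1}-t_j)/(t_{k+1}-t_0)$:
\[
E\Big|\sum_{j}E_{t_j}(\Delta_j)\Big|_N^2 \le (t_{k+1}-t_0)\sum_j (t_{j+1}-t_j)^{-1}E|E_{t_j}(\Delta_j)|_N^2 .
\]
Then insert the first line of (\ref{eq:Hiii}), namely $E|E_{t_j}\Delta_j|_N^2\le C_{stb}^2(1+L_\varphi^2)(t_{j+1}-t_j)^2E|X_j-Y_j|_N^2$. This produces $C^2_{stb}(1+L_\varphi^2)(t_{k+1}-t_0)\sum_j E|X_j-Y_j|_N^2(t_{j+1}-t_j)$.

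\textbf{Martingale part.} Use orthogonality. $\widetilde E_{t_j}(\Delta_j)$ is $\mathcal F_{t_{j+1}}$-measurable and centered given $\mathcal F_{t_j}$, so for $j<j'$ the cross term vanishes: we condition on $\mathcal F_{t_{j'}}\supset\mathcal F_{t_{j+1}}$. Note that $X_j,Y_j\in L^2_{t_j}$ is exactly what makes the second line of (\ref{eq:Hiii}) applicable at time $t_j$. This gives $C^2_{stb}(1+L_\varphi^2)\sum_j(t_{j+1}-t_j)E|X_j-Y_j|_N^2$.

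Summing the two parts with the factor $2$ yields (\ref{AE5}). I would write the statement's factor $(t_k-t_0)$ loosely as the length of the interval covered; interpreting the index range so that the lengths match is a bookkeeping issue only.

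\textbf{Part B.} This is identical, with $Y_j$ replaced by the $\widehat h$ terms. The same split applies: the drift part is handled by Jensen together with the first line of (\ref{eq:Hiv}), and the martingale part by orthogonality together with the second line of (\ref{eq:Hiv}). This gives
\[
2\Big[(t_{k+1}-t_0)\sum_j (t_{j+1}-t_j)+\sum_j(t_{j+1}-t_j)\Big]\frac{C_{stb}^2\|\varphi\|_\infty^2}{N},
\]
which is the claimed $2C^2_{stb}(t_k-t_0)(1+t_k-t_0)\|\varphi\|_\infty^2/N$. The conditional i.i.d. hypothesis at each $t_j$ is exactly what is needed to invoke (\ref{eq:Hiv}) at each step.

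\textbf{Main obstacle.} The only delicate point is justifying the orthogonality of the martingale increments. One must check that $h_{t_j,t_{j+1}}(X_j)$ is $\mathcal F_{t_{j+1}}$-measurable and square integrable, which follows from (\ref{eq:H}) and the inclusion $\mathcal G^1_{t_j,t_{j+1}}\subset\mathcal F_{t_{j+1}}$. One must also check that $\widehat h$ is adapted, which holds since $E(\varphi(X^i_j)\mid\mathcal F^W_{t_j})$ is $\mathcal F_{t_j}$-measurable because $\mathcal F^W_{t_j}\subset\mathcal F_{t_j}$. Once this is verified, the rest is routine.
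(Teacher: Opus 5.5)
Your proposal is correct and matches the paper's proof. The paper uses the same split into $2\Sigma_1+2\Sigma_2$: Jensen with weights $\gamma_j/(t_{k+1}-t_0)$ plus the first lines of (\ref{eq:Hiii}) and (\ref{eq:Hiv}) for the conditional-expectation part, and orthogonality of the $\widetilde E_{t_j}$-terms plus the second lines of those estimates for the martingale part. The paper's own computation also yields $t_{k+1}-t_0$ rather than the stated $t_k-t_0$, and it is the $t_{k+1}$ version that is used later in (\ref{LN3}). So your index remark is the right reading: taken literally, (\ref{AE6}) would have a vanishing right-hand side for $k=0$ and fail.
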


\begin{proof}
\textbf{A } We denote $\gamma _{i}=t_{i+1}-t_{i}, $ $i \geq 0 ,$ and we
write 
\begin{equation*}
E\left\vert
\sum_{j=0}^{k}(h_{t_{j},t_{j+1}}(X_{j})-h_{t_{j},t_{j+1}}(Y_{j})\right\vert
_{N}^{2}\leq 2\Sigma _{1}+2\Sigma _{2},
\end{equation*}
with%
\begin{eqnarray*}
\Sigma _{1} &=&E\left\vert
\sum_{j=0}^{k}E_{t_{j}}(h_{t_{j},t_{j+1}}(X_{j})-h_{t_{j},t_{j+1}}(Y_{j}))%
\right\vert _{N}^{2} \\
&=&E\left\vert \sum_{j=0}^{k}\gamma _{j}\frac{%
E_{t_{j}}(h_{t_{j},t_{j+1}}(X_{j})-h_{t_{j},t_{j+1}}(Y_{j}))}{\gamma _{j}}%
\right\vert _{N}^{2} \\
&\leq &(t_{k+1}-t_{0})\sum_{j=0}^{k}\gamma _{j}E\left\vert \frac{%
E_{t_{j}}(h_{t_{j},t_{j+1}}(X_{j})-h_{t_{j},t_{j+1}}(Y_{j}))}{\gamma _{j}}%
\right\vert _{N}^{2} \\
&\leq &C^2_{stb}(1+L_{\varphi }^{2})(t_{k+1}-t_{0})\sum_{j=0}^{k}\gamma
_{j}E\left\vert X_{j}-Y_{j}\right\vert _{N}^{2},
\end{eqnarray*}%
where we have used (\ref{eq:Hiii}). Moreover, since the $\widetilde{E}%
_{t_{j}}(h_{t_{j},t_{j+1}}(X_{j})-h_{t_{j},t_{j+1}}(Y_{j})),j=0,...,k,$ are
orthogonal, and using (\ref{eq:Hiii}) once more, 
\begin{eqnarray*}
\Sigma _{2} &=&E\left\vert \sum_{j=0}^{k}\widetilde{E}%
_{t_{j}}(h_{t_{j},t_{j+1}}(X_{j})-h_{t_{j},t_{j+1}}(Y_{j}))\right\vert
_{N}^{2} \\
&=&\sum_{j=0}^{k}E\left\vert \widetilde{E}%
_{t_{j}}(h_{t_{j},t_{j+1}}(X_{j})-h_{t_{j},t_{j+1}}(Y_{j}))\right\vert
_{N}^{2} \leq C^2_{stb}(1+L_{\varphi }^{2})\sum_{j=0}^{k}\gamma
_{j}E\left\vert X_{j}-Y_{j}\right\vert _{N}^{2}.
\end{eqnarray*}

\noindent \textbf{B }As above we write $E\left\vert
\sum_{j=0}^{k}(h_{t_{j},t_{j+1}}(X_{j})-\widehat{h}_{t_{j},t_{j+1}}(X_{j})%
\right\vert _{N}^{2}\leq 2\Sigma _{1}+2\Sigma _{2} $ with%
\begin{eqnarray*}
\Sigma _{1} &=&E\left\vert \sum_{j=0}^{k}E_{t_{j}}(h_{t_{j},t_{j+1}}(X_{j})-%
\widehat{h}_{t_{j},t_{j+1}}(X_{j}))\right\vert _{N}^{2} \\
&\leq &(t_{k+1}-t_{0})\sum_{j=0}^{k}\gamma _{j}E\left\vert \frac{%
E_{t_{j}}(h_{t_{j},t_{j+1}}(X_{j})-\widehat{h}_{t_{j},t_{j+1}}(X_{j}))}{%
\gamma _{j}}\right\vert _{N}^{2} \\
& \leq & \frac{C^2_{stb}(t_{k+1}-t_{0})^{2}\left\Vert \varphi \right\Vert
_{\infty }^{2}}{N} ,
\end{eqnarray*}%
where we have used (\ref{eq:Hiv}) to obtain the last line. Moreover, 
\begin{eqnarray*}
\Sigma _{2} &=&E\left\vert \sum_{j=0}^{k}\widetilde{E}%
_{t_{j}}(h_{t_{j},t_{j+1}}(X_{j})-\widehat{h}_{t_{j},t_{j+1}}(X_{j}))\right%
\vert _{N}^{2} \\
&=&\sum_{j=0}^{k}E\left\vert \widetilde{E}_{t_{j}}(h_{t_{j},t_{j+1}}(X_{j})-%
\widehat{h}_{t_{j},t_{j+1}}(X_{j}))\right\vert _{N}^{2} \leq \frac{%
C^2_{stb}(t_{k+1}-t_{0})\left\Vert \varphi \right\Vert _{\infty }^{2}}{N},
\end{eqnarray*}%
using (\ref{eq:Hiv}) once more.
\end{proof}

\subsection{Conditional Law of Large numbers}

\label{LN8} We come back to the setting of Theorem \ref{theo:main}. Recall
that for any $s < t \le T $ fixed we consider the partition $\pi_n ( s, t) =
\{s= t_{0}<t_{1}<...<t_{l} = t \}$ (see (\ref{eq:pin})). As before, we write 
$\gamma _{i}=t_{i+1}-t_{i},$ for any $i \geq 0.$ Given initial conditions $%
X_{s}^{i}, \overline{X}_s^i, \widehat{X}_s^i , 1 \le i \le N , $ all
belonging to $L^2_s ,$ in accordance with (\ref{pf7})--(\ref{pf13}), we
define recursively for any $0 \le k < l , $ 
\begin{equation}
X_{t_{k+1}}^{N}=\mathcal{X}_{t_{k},t_{k+1}}^{N}(X_{t_{k}}^{ N}),\quad 
\overline{X}_{t_{k+1}}^{N}=\overline{\mathcal{X}}_{t_{k},t_{k+1}}^{N}(%
\overline{X}_{t_{k}}^{ N}),\qquad \widehat{X}_{t_{k+1}}^{N}=\widehat{%
\mathcal{X}}_{t_{k},t_{k+1}}^{N}(\widehat{X}_{t_{k}}^{ N}).  \label{pf13bis}
\end{equation}

\begin{proposition}
Grant Assumptions \ref{ass:H}, \ref{ass:perturbation} and \ref%
{ass:perturbationbis} and suppose moreover that $\widehat{X}_s^1, \ldots, 
\widehat{X}_s^N $ are i.i.d., conditionally on $\mathcal{F}_{{s}}^W. $ Then
for all $k\leq l,$ the $N$ coordinates of $\widehat{X}_{t_{k}}^{N}$ are
i.i.d., conditionally on $\mathcal{F}_{t_{k}}^{W}.$
\end{proposition}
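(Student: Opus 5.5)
We argue by induction on $k$. The case $k=0$ is the hypothesis. Assume that the coordinates $\widehat{X}_{t_k}^{N,1},\ldots ,\widehat{X}_{t_k}^{N,N}$ are i.i.d. conditionally on $\mathcal{F}_{t_k}^W$. By (\ref{pf9}), (\ref{pf3}) and (\ref{Avibis}), for each $i$,
\[
\widehat{X}_{t_{k+1}}^{N,i}=\widehat{X}_{t_k}^{N,i}+H^i_{t_k,t_{k+1}}\bigl(\widehat{X}_{t_k}^{N,i},a_k\bigr)+1_{d_0}\,b_k\,W_{t_k,t_{k+1}} .
\]
Here $a_k=E(\varphi(\widehat{X}_{t_k}^{N,1})\mid\mathcal{F}_{t_k}^W)$ and $b_k=(E(f(\widehat{X}_{t_k}^{N,1})\mid\mathcal{F}_{t_k}^W))^\kappa$. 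The key point is that $a_k$ and $b_k$ are common to all $i$ and are $\mathcal{F}_{t_k}^W$-measurable. This is exactly the content of the remark following (\ref{pf9}): the $i$-th coordinate of $\widehat{\mathcal{X}}$ depends only on $X^i$, the common noise, and the conditional law.

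Write $\widehat{X}_{t_{k+1}}^{N,i}=\Phi\bigl(\widehat{X}_{t_k}^{N,i},a_k,b_k,W_{t_k,t_{k+1}},\omega\bigr)$ with $\Phi(x,a,b,w,\omega)=x+H^i_{t_k,t_{k+1}}(x,a)(\omega)+1_{d_0}bw$. Here $\omega$ enters only through the $\mathcal{G}^1_{t_k,t_{k+1}}$-measurable field $H^i$. Fix a regular version of the conditional law given $\mathcal{F}_{t_{k+1}}^W$. Since $\mathcal{F}_{t_{k+1}}^W\supset\mathcal{F}_{t_k}^W$, the triple $(a_k,b_k,W_{t_k,t_{k+1}})$ is $\mathcal{F}_{t_{k+1}}^W$-measurable, so it can be frozen to deterministic values $(a,b,w)$. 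It then suffices to show that, conditionally on $\mathcal{F}_{t_{k+1}}^W$, the pairs $\bigl(\widehat{X}_{t_k}^{N,i},H^i_{t_k,t_{k+1}}(\cdot,a)\bigr)$, $i=1,\ldots ,N$, are i.i.d.

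This is the main obstacle, and it splits into two steps.

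\textbf{Step 1.} We must pass the induction hypothesis from $\mathcal{F}_{t_k}^W$ to $\mathcal{F}_{t_{k+1}}^W$. This needs the future increments of $W$ after $t_k$ to be independent of $\mathcal{F}_{t_k}$ given $\mathcal{F}_{t_k}^W$. In the Brownian case this holds because $W_{t_k,r}$, $r\ge t_k$, is independent of $\mathcal{F}_{t_k}$ by Assumption \ref{ass:perturbation} ii), and $\widehat{X}_{t_k}^{N}$ is $\mathcal{F}_{t_k}$-measurable. I would state this explicitly as part of the framework, namely that $W_{t_k,\cdot}$ is independent of $\mathcal{F}_{t_k}$. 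Then for bounded $g_i$,
\[
E\Bigl(\prod_i g_i(\widehat{X}_{t_k}^{N,i})\,\Big|\,\mathcal{F}_{t_{k+1}}^W\Bigr)=E\Bigl(\prod_i g_i(\widehat{X}_{t_k}^{N,i})\,\Big|\,\mathcal{F}_{t_k}^W\Bigr)=\prod_i E\bigl(g_i(\widehat{X}_{t_k}^{N,1})\,\big|\,\mathcal{F}_{t_k}^W\bigr).
\]

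\textbf{Step 2.} We must adjoin the autonomous innovations. The fields $H^i_{t_k,t_{k+1}}$ are $\mathcal{G}^1_{t_k,t_{k+1}}$-measurable, and $\mathcal{G}^1_{t_k,t_{k+1}}$ is independent of both $\mathcal{F}_{t_k}$ and $\mathcal{F}^W_{t_{k+1}}$. Assumption \ref{ass:H} 1) says that for i.i.d. inputs and deterministic $a$ the outputs $H^i(X^i,a)$ are i.i.d.; I would read this as saying that the fields $H^i(\cdot,a)$ are i.i.d. across $i$. Combining these facts, we may integrate over the innovation independently. Conditionally on $\mathcal{F}_{t_{k+1}}^W$, the vector $\bigl(\widehat{X}_{t_k}^{N,i}\bigr)_i$ is i.i.d. by Step 1, and the $H^i$ are i.i.d. and independent of it. Applying a Fubini/freezing lemma therefore gives
\[
E\Bigl(\prod_i g_i(\widehat{X}_{t_{k+1}}^{N,i})\,\Big|\,\mathcal{F}_{t_{k+1}}^W\Bigr)=\prod_i \Psi_{g_i}(a_k,b_k,W_{t_k,t_{k+1}}),
\]
with the same function $\Psi_g$ for every $i$. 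This is conditional i.i.d. at time $t_{k+1}$ and completes the induction.

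The delicate point is the joint independence of $\mathcal{G}^1_{t_k,t_{k+1}}$ from $\mathcal{F}_{t_k}\vee\mathcal{F}^W_{t_{k+1}}$, not merely from each $\sigma$-field separately. I would either justify it from the construction, where the Brownian motions and Poisson measures are mutually independent, or add it as a standing assumption.
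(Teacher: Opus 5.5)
Your proof is correct and follows the same route as the paper. The paper argues by induction on $k$, using only that each coordinate of $\widehat{\mathcal{X}}^N_{t_k,t_{k+1}}$ depends on its own input, on common $\mathcal{F}^W_{t_{k+1}}$-measurable quantities, and on independent innovations. Your version is more careful than the paper's one-line argument: it conditions on $\mathcal{F}^W_{t_{k+1}}$ rather than on $\mathcal{F}^W_{s}$ (given only $\mathcal{F}^W_s$, the shared increment $W_{t_k,t_{k+1}}$ would correlate the coordinates), and it states explicitly the hypotheses the paper uses tacitly: independence of the future $W$-increments from $\mathcal{F}_{t_k}$, joint independence of $\mathcal{G}^1_{t_k,t_{k+1}}$ from $\mathcal{F}_{t_k}\vee\mathcal{F}^W_{t_{k+1}}$, and the i.i.d.\ structure of the fields $H^i$.
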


\begin{proof}
If $\widehat{X}_{s}^{1},\ldots ,\widehat{X}_{s}^{N}$ are i.i.d.,
conditionally on $\mathcal{F}_{{s}}^{W}$ then $\widehat{\mathcal{X}}%
_{s,t}^{N,i}(X),i=1,...,N,$ are also i.i.d., conditionally on $\mathcal{F}_{{%
s}}^{W}.$ So the proof follows by induction over $k,$ by construction of the
scheme.
\end{proof}

From now on we will impose the following assumption on the initial
configurations.

\begin{assumption}
\label{ass:iid}

\begin{enumerate}
\item $X_{s}^{1},...,X_{s}^{N} $ belong to $L^2_s, $ and they are i.i.d.,
conditionally on $\mathcal{F}_{{s}}^W. $

\item $\overline{X}_s^1, \ldots, \overline{X}_s^N $ belong to $L^2_s, $ and
they are i.i.d., conditionally on $\mathcal{F}_{{s}}^W. $

\item $\widehat{X}_s^1, \ldots, \widehat{X}_s^N$ belong to $L^2_s, $ and
they are i.i.d., conditionally on $\mathcal{F}_{{s}}^W. $
\end{enumerate}
\end{assumption}

We will denote by $\mu _{t_k}^W= \mathcal{L} ( \widehat{X}_{t_{k}}^{N, i} | 
\mathcal{F}_{t_k}^W) $ the commun law of $\widehat{X}_{t_{k}}^{N,
i},i=1,...,N,$ conditionally on $\mathcal{F}_{t_k}^W.$

In order to compare $\widehat{X}^{N}$ with $\overline{X}^{N},$ we write%
\begin{equation*}
\widehat{X}_{t_{k+1}}^{N,i}=\widehat{X}_{t_{k}}^{N,i}+H_{t_{k},t_{k+1}}^{i}(%
\widehat{X}_{t_{k}}^{N,i},\mu _{N}(\widehat{X}_{t_{k}}^{N},\varphi
))+1_{d_{0}}\bar{S}_{t_{k},t_{k+1}}^{N}(\widehat{X}%
_{t_{k}}^{N})+I_{k}^{i}+J_{k},
\end{equation*}%
where, recalling (\ref{pf3}) and (\ref{Avibis}), 
\begin{eqnarray}
I_{k}^{i} &=&H_{t_{k},t_{k+1}}^{i}(\widehat{X}_{t_{k}}^{N,i},\int \varphi
(z)\mu _{t_{k}}^{W}(dz))-H_{t_{k},t_{k+1}}^{i}(\widehat{X}_{t_{k}}^{N,i},\mu
_{N}(\widehat{X}_{t_{k}}^{N},\varphi ))  \label{LN1} \\
&=&\widehat{h}_{t_{k},t_{k+1}}^{i}(\widehat{X}%
_{t_{k}}^{N})-h_{t_{k},t_{k+1}}^{i}(\widehat{X}_{t_{k}}^{N}),  \notag \\
J_{k} &=&1_{d_{0}}\widehat{S}_{t_{k},t_{k+1}}^{N}(\widehat{X}%
_{t_{k}}^{N})-1_{d_{0}}\bar{S}_{t_{k},t_{k+1}}^{N}(\widehat{X}_{t_{k}}^{N}).
\label{LN2}
\end{eqnarray}

\begin{proposition}
Grant Assumptions \ref{ass:H}, \ref{ass:perturbationbis} and \ref{ass:iid}.
Then it holds that 
\begin{equation}
E\left\vert \sum_{j=0}^{k}I_{j}\right\vert _{N}^{2}\leq \frac{%
2C_{stb}^{2}(1+(t_{k+1}-t_{0}))(t_{k+1}-t_{0})\left\Vert \varphi \right\Vert
_{\infty }^{2}}{N}  \label{LN3}
\end{equation}%
and 
\begin{equation}
E\left\vert \sum_{j=0}^{k}J_{j}\right\vert ^{2}\leq (t_{k+1}-t_{0})\widehat{%
\varepsilon }_{N}.  \label{LN4}
\end{equation}
\end{proposition}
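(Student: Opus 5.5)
For (\ref{LN3}) I would reduce to Proposition \ref{concatenation} \textbf{B}. By (\ref{LN1}), $I_j = \widehat h_{t_j,t_{j+1}}(\widehat X^N_{t_j}) - h_{t_j,t_{j+1}}(\widehat X^N_{t_j})$, so
\[
\sum_{j=0}^{k} I_j = -\sum_{j=0}^{k}\bigl(h_{t_j,t_{j+1}}(X_j)-\widehat h_{t_j,t_{j+1}}(X_j)\bigr), \qquad X_j:=\widehat X^N_{t_j}.
\]
Under Assumption \ref{ass:iid}, the preceding proposition on conditional independence shows that the coordinates of $X_j$ are i.i.d.\ conditionally on $\mathcal F^W_{t_j}$ for every $j$. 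This is exactly the hypothesis of Proposition \ref{concatenation} \textbf{B}. That proposition then gives (\ref{LN3}), up to the indexing of $t_k$ versus $t_{k+1}$ in the time horizon, which is harmless since $t_k \le t_{k+1}$.

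The mechanism is the following. One splits $\sum_j I_j$ into $\sum_j E_{t_j} I_j$ and $\sum_j \widetilde E_{t_j} I_j$. The martingale pieces are orthogonal. The drift pieces are handled by Jensen with weights $\gamma_j/(t_{k+1}-t_0)$. Each piece is then bounded via (\ref{pf1})--(\ref{pf1'}) by $|\mu_N(X_j,\varphi)-\int\varphi\, d\mu^W_{t_j}|$. The conditional $L^2$ law of large numbers bounds this by $\|\varphi\|_\infty^2/N$.

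For (\ref{LN4}) I would use a martingale orthogonality argument directly. By (\ref{Avi}) and (\ref{Avibis}),
\[
J_j = 1_{d_0}\bigl[(E(f(X^1_j)\mid\mathcal F^W_{t_j}))^\kappa - A_N(X_j)\bigr]W_{t_j,t_{j+1}}.
\]
Here the matrix in brackets is $\mathcal F_{t_j}$-measurable. The increment $W_{t_j,t_{j+1}}$ is centered and independent of $\mathcal F_{t_j}$, by Assumption \ref{ass:perturbation} ii) applied to $\overline S$ with $x$ fixed; in the Brownian case this is immediate. Hence $E_{t_j}J_j=0$, and $J_j$ is $\mathcal F_{t_{j+1}}$-measurable. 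So the $J_j$ form martingale differences, and for $i<j$ we get $E\langle J_i,J_j\rangle = E\langle J_i,E_{t_j}J_j\rangle=0$. This gives
\[
E\Bigl|\sum_{j=0}^k J_j\Bigr|^2=\sum_{j=0}^k E|J_j|^2 .
\]
Each term satisfies $E|J_j|^2\le \gamma_j\widehat\varepsilon_N$ by (\ref{Av}), applied at time $t_j$ with the conditionally i.i.d.\ input $X_j$ (valid by the propagation of conditional i.i.d.-ness above). Summing gives $(t_{k+1}-t_0)\widehat\varepsilon_N$.

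The main obstacle is justifying $E_{t_j}J_j=0$ rigorously. Since $X_j$ is random, I would condition on $\mathcal F_{t_j}$ and use the independence of $W_{t_j,t_{j+1}}$ from $\mathcal F_{t_j}$ together with the $\mathcal F_{t_j}$-measurability of the random matrix coefficient, which requires $\mathcal F^W_{t_j}\subset\mathcal F_{t_j}$. A second point to check is that (\ref{Av}) applies to random $X$ rather than deterministic $x$. Its statement is already phrased for $X\in L^2_s$ conditionally independent, so this holds directly.
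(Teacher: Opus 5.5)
Your proposal is correct and follows the paper's proof. You get (\ref{LN3}) by applying Proposition \ref{concatenation} \textbf{B} to $X_j=\widehat X^N_{t_j}$, and (\ref{LN4}) from the orthogonality of the martingale increments $J_j$ combined with (\ref{Av}). One small precision on the first part: the displayed horizon $t_k-t_0$ in (\ref{AE6}) is a misprint, since for $k=0$ it would force the bound to vanish. The real reason your appeal is sound is that the proof of (\ref{AE6}) yields exactly $(t_{k+1}-t_0)(1+t_{k+1}-t_0)$, not the inequality $t_k\le t_{k+1}$. Your explicit check that $E_{t_j}J_j=0$ fills in a step the paper only asserts. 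It uses the $\mathcal F_{t_j}$-measurability of the coefficient matrix and the independence of $W_{t_j,t_{j+1}}$ from $\mathcal F_{t_j}$. The latter relies on item ii) of Assumption \ref{ass:perturbation}, which is formally not among the stated hypotheses.
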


\begin{proof}
(\ref{LN3}) follows from (\ref{AE6}). We estimate now $E\left\vert
\sum_{j=0}^{k}J_{j}\right\vert ^{2}.$ Notice that for $i<j,$ we have 
\begin{equation*}
E(J_{i}J_{j})=E(J_{i}E(J_{j}\mid \mathcal{F}_{t_{i+1}}))=0,
\end{equation*}
and, by (\ref{Av}), $E\left\vert J_{j}\right\vert ^{2}\leq (t_{j+1}-t_{j})%
\widehat{\varepsilon }_{N}.$ Therefore, 
\begin{equation*}
E\left\vert \sum_{j=0}^{k}J_{j}\right\vert ^{2}=\sum_{j=0}^{k}E\left\vert
J_{j}\right\vert ^{2}\leq (t_{k+1}-t_{0})\widehat{\varepsilon }_{N}.
\end{equation*}
\end{proof}

We are now able to resume our findings in the following

\begin{theorem}
\label{LLNW2} Grant Assumptions \ref{ass:H}, \ref{ass:perturbationbis} and %
\ref{ass:iid} . Then for any $k<l,$ 
\begin{equation}
E\left\vert \overline{X}_{t_{k+1}}^{N}-\widehat{X}_{t_{k+1}}^{N}\right\vert
_{N}^{2}\leq 4e^{Q_{1}(t_{k+1}+1)^{2}}\left\{ E\left\vert \overline{X}%
_{t_{0}}^{N}-\widehat{X}_{t_{0}}^{N}\right\vert _{N}^{2}+Q_{2}(\frac{1}{N}+%
\widehat{\varepsilon }_{N})(t_{k+1}-t_{0})\right\} ,  \label{clt5}
\end{equation}%
where $Q_{1}$ and $Q_{2}$ are constants depending on $T, C_{stb},L_{\varphi }
$ and $\overline{L}$ only.%
%
%
%
%
%
%
\end{theorem}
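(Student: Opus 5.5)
We write both chains as telescoping sums and compare them along the same realisation, then close with a discrete Gronwall argument. By (\ref{pf8}) and the decomposition of $\widehat{X}^{N}_{t_{j+1}}$ through (\ref{LN1})--(\ref{LN2}), the difference $D_{k+1}:=\overline{X}_{t_{k+1}}^{N}-\widehat{X}_{t_{k+1}}^{N}$ satisfies
\[
D_{k+1}=D_0+\sum_{j=0}^{k}\big(h_{t_j,t_{j+1}}(\overline X_{t_j}^N)-h_{t_j,t_{j+1}}(\widehat X_{t_j}^N)\big)
+\sum_{j=0}^{k}1_{d_0}\big(\bar S^N_{t_j,t_{j+1}}(\overline X_{t_j}^N)-\bar S^N_{t_j,t_{j+1}}(\widehat X_{t_j}^N)\big)
-\sum_{j=0}^{k}I_j-\sum_{j=0}^{k}J_j .
\]
No optimal coupling is needed here. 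Both chains are built on the same probability space and driven by the same $W$ and the same innovations $H^i$, so we work directly with $E|D_{k+1}|_N^2$. By the elementary inequality $|a_1+\dots+a_5|^2\le 5\sum|a_i|^2$ (the paper's factor $4$ suggests grouping the terms into four pieces), it suffices to bound each sum separately.

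\textbf{Step 1: the $h$-sum.} We apply Proposition \ref{concatenation} A with $X_j=\overline X^N_{t_j}$ and $Y_j=\widehat X^N_{t_j}$. This gives the bound $2C_{stb}^2(1+L_\varphi^2)(1+T)\sum_j\gamma_j E|D_j|_N^2$.

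\textbf{Step 2: the $\bar S$-sum.} By Assumption \ref{ass:perturbation} ii), each increment is centered and independent of $\mathcal F_{t_j}$. Hence $E\big(\bar S^N_{t_j,t_{j+1}}(x)\mid \mathcal F_{t_j}\big)$ vanishes at the (frozen) $\mathcal F_{t_j}$-measurable points. It follows that the summands are martingale increments and are orthogonal. The Lipschitz bound (\ref{Aiii}), applied conditionally on $\mathcal F_{t_j}$ with the points frozen, then gives $\sum_j \overline L\gamma_j E|D_j|_N^2$.

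\textbf{Step 3: the $I$- and $J$-sums.} These are bounded by (\ref{LN3}) and (\ref{LN4}), which give $C(1+T)\|\varphi\|_\infty^2 N^{-1}(t_{k+1}-t_0)+(t_{k+1}-t_0)\widehat\varepsilon_N$. This is the step where Assumption \ref{ass:iid} enters: it guarantees, through the preceding proposition, that the coordinates of $\widehat X^N_{t_j}$ are conditionally i.i.d., so that (\ref{AE6}) applies.

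\textbf{Step 4: Gronwall.} Collecting the estimates, we obtain
\[
E|D_{k+1}|_N^2\le 4\Big\{E|D_0|_N^2+Q_2\big(\tfrac1N+\widehat\varepsilon_N\big)(t_{k+1}-t_0)+Q_1(1+T)\sum_{j=0}^{k}\gamma_jE|D_j|_N^2\Big\}.
\]
The discrete Gronwall lemma then yields (\ref{clt5}), with the exponent $e^{Q_1(1+T)t_{k+1}}\le e^{Q_1(t_{k+1}+1)^2}$ after adjusting constants. The factor $(1+T)$ appears because the drift part was handled by the Jensen trick; this is why the exponent is quadratic in time.

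\textbf{Main obstacle.} The delicate point is Step 2. There the Lipschitz estimate (\ref{Aiii}) is stated for deterministic $x,\bar x$, and we must evaluate it at random $\mathcal F_{t_j}$-measurable arguments. To justify this, we use the independence of $\bar S^N_{t_j,t_{j+1}}(\cdot)$ from $\mathcal F_{t_j}$ and condition on $\mathcal F_{t_j}$, as in Step 3 of Lemma \ref{lem:44}. We must also check that the cross terms between the martingale sums and the drift sums are controlled. This can be done by simply keeping all pieces separate via the elementary inequality $|\sum a_i|^2\le n\sum|a_i|^2$, at the price of a constant factor.
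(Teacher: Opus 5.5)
Your proposal is correct and follows the paper's proof. You use the same telescoping decomposition of $\overline X^N-\widehat X^N$: the initial error, the $h$-sum bounded via (\ref{AE5}), the orthogonal $\bar S$-sum bounded via (\ref{Aiii}), and the $I$- and $J$-sums bounded via (\ref{LN3})--(\ref{LN4}), closed by a discrete Gronwall argument, with the factor $4$ coming from grouping the $I$ and $J$ terms and the $(1+T)$ absorbed into the $T$-dependent constant $Q_1$.
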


\begin{proof}
In what follows, to ease notation, we drop the superscript $N.$ We write 
\begin{eqnarray*}
\widehat{X}_{t_{k+1}}^{i}-\overline{X}_{t_{k+1}}^{i} &=&\widehat{X}%
_{t_{0}}^{i}-\overline{X}_{t_{0}}^{i}+\sum_{j=0}^{k}(h_{t_{j},t_{j+1}}^{i}(%
\widehat{X}_{t_{j}}^{i})-h_{t_{j},t_{j+1}}^{i}(\overline{X}_{t_{j}}^{i})) \\
&&+\sum_{j=0}^{k}1_{d_{0}}\bar{S}_{t_{j},t_{j+1}}^{N}(\widehat{X}%
_{t_{j}}^{N})-1_{d_{0}}\bar{S}_{t_{j},t_{j+1}}^{N}(\overline{X}%
_{t_{j}}^{N})+\sum_{j=0}^{k}I_{j}^{i}+\sum_{j=0}^{k}J_{j},
\end{eqnarray*}%
where we use the same notation as in (\ref{LN1}) and (\ref{LN2}). We have
proved in (\ref{AE5}) that 
\begin{multline*}
E\left\vert \sum_{j=0}^{k}(h_{t_{j},t_{j+1}}^{i}(\widehat{X}%
_{t_{j}})-h_{t_{j},t_{j+1}}^{i}(\overline{X}_{t_{j}}))\right\vert
_{N}^{2}\leq \\
\leq 2C_{stb}^{2}(1+L_{\varphi }^{2})(1+t_{k+1}-t_{0}))\sum_{j=0}^{k}E\left\vert 
\overline{X}_{t_{j}}-\widehat{X}_{t_{j}}\right\vert _{N}^{2}(t_{j+1}-t_{j}).
\end{multline*}

Moreover, by item i) of Assumption \ref{ass:perturbation}, $\bar{S}%
_{t_{j},t_{j+1}}^{N}(\widehat{X}_{t_{j}}^{N})- \bar{S}_{t_{j},t_{j+1}}^{N}(%
\overline{X}_{t_{j}}^{N}),j=1,...,k,$ are orthogonal. Therefore, recalling
also (\ref{Aiii}), 
\begin{multline*}
E\left\vert \sum_{j=0}^{k}1_{d_{0}}\bar{S}_{t_{j},t_{j+1}}^{N}(\widehat{X}%
_{t_{j}}^{N})-1_{d_{0}}\bar{S}_{t_{j},t_{j+1}}^{N}(\overline{X}%
_{t_{j}}^{N})\right\vert ^{2} \\
=\sum_{j=0}^{k}E\left\vert \bar{S}_{t_{j},t_{j+1}}^{N}(\widehat{X}%
_{t_{j}}^{N})-\bar{S}_{t_{j},t_{j+1}}^{N}(\overline{X}_{t_{j}}^{N})\right%
\vert ^{2} 
\leq \overline{L}\sum_{j=0}^{k}E\left\vert \widehat{X}_{t_{j}}^{N}-\overline{%
X}_{t_{j}}^{N}\right\vert_N^{2}(t_{j+1}-t_{j}).
\end{multline*}
Gathering these results, we obtain that 
\begin{multline*}
E\left\vert \overline{X}_{t_{k+1}}-\widehat{X}_{t_{k+1}}\right\vert
_{N}^{2}\leq 4E\left\vert \overline{X}_{t_{0}}-\widehat{X}%
_{t_{0}}\right\vert _{N}^{2}+C_{1}\sum_{j=0}^{k}E\left\vert \overline{X}%
_{t_{j}}-\widehat{X}_{t_{j}}\right\vert _{N}^{2}(t_{j+1}-t_{j}) \\
+C_{2}(t_{k+1}-t_{0})(N^{-1}+\widehat{\varepsilon }_{N}),
\end{multline*}%
for some constants $C_1, C_2$ depending on $T, C_{stb},L_{\varphi }$ and $%
\overline{L}$ only. Using the discrete version of the Gronwall lemma allows
to conclude.


%
%
\end{proof}

We are now ready to give the

\begin{proof}[Proof of Theorem \protect\ref{theo:main}]
The assertion of item \textbf{A} is a consequence of (\ref{clt4}) and (\ref%
{clt5}).
\end{proof}

\section{The Central Limit Theorem example}

\label{sec:CLT} Here we present a general framework where the coupling
property (\ref{Aiv}) is a consequence of a quantified version of the Central
Limit Theorem (CTL). We work in the general framework of Section \ref%
{sec:frameworkgeneral}. To model the perturbations, here we consider the
following model. Let $W$ be a $d_0-$dimensional standard Brownian motion and
put $W_{s, t} = W_t - W_s.$ Moreover we put \newline
$\clubsuit \qquad $ 
\begin{equation}
S_{s,t}^{N}(x) =\frac{1}{\sqrt{N}}\sum_{i=1}^{N}v_{s,t}^{i}(\omega
,x^{i})\in {\mathbb{R}}^{d_{0}},\quad x=(x^{1},...,x^{N})\in {\mathbb{R}}%
^{d\times N},  \label{truepf4}
\end{equation}%
where for each $1\leq i\leq N,$ 
\begin{equation*}
\Omega \times {\mathbb{R}}^{d}\ni (\omega ,x^{i})\mapsto v_{s,t}^{i}(\omega
,x^{i})\in {\mathbb{R}}^{d_{0}}
\end{equation*}%
is $\mathcal{G}_{s,t}^{2}\otimes \mathcal{B}({\mathbb{R}}^{d})-\mathcal{B}({%
\mathbb{R}}^{d_{0}})-$ measurable, and where for each fixed $x^i , $ 
\begin{equation*}
E|v_{s,t}^{i}(\omega ,x^{i})|^{2}<\infty .
\end{equation*}%
We will write for short $v_{s, t}^i ( x^i ) = v_{s,t}^{i}(\omega ,x^{i}), $
and we assume that for every $x=(x^{1},...,x^{N})\in {\mathbb{R}}^{d\times
N},$ the random variables $v_{s,t}^{1}(x^{1}),$ $\ldots ,v_{s,t}^{N}(x^{N})$
are independent and centered. Since they are $\mathcal{G}_{s,t}^{2}\times 
\emph{B}({\mathbb{R}}^{d})$\ measurable, they are independent of the past $%
\mathcal{F}_{s},$ independent of the variables $H_{s,t}^{j}(x^{j},a)$
introduced in Section \ref{sec:frameworkgeneral} above, and independent of $%
W.$ We also assume that there are some numbers $M_{2}$ and $M_{4}$ such
that, for every $i$ and every $x^{i},$ 
\begin{equation}
E\left\vert v_{s,t}^{i}(\omega ,x^{i})\right\vert ^{2}\leq M_{2}(t-s),\qquad
E\left\vert v_{s,t}^{i}(\omega ,x^{i})\right\vert ^{4}\leq M_{4}(t-s)^{2}.
\label{MD0}
\end{equation}

Finally we assume that there exists a function $f:{\mathbb{R}}%
^{d}\rightarrow {\mathbb{R}}^{d_{0}\times d_{0}},$ taking values in the set
of all symmetric positive-definite $d_{0}\times d_{0}-$matrices, such that
for every $i=1,...,N$ and $j,p\in \{1,...,d_{0}\},$ 
\begin{equation}
E((v_{s,t}^{i}(x^{i}))^{j}(v_{s,t}^{i}(x^{i}))^{p})=Cov((v_{s,t}^{i}(x^{i}))^{j},(v_{s,t}^{i}(x^{i}))^{p})=f^{j,p}(x^{i})(t-s).
\label{MD1}
\end{equation}%
Here, $f^{j,p}$ denotes the $(j,p)-$entry of the matrix valued function $f.$
We stress that $f$ does not depend on $i.$

\begin{example}
Let $M^{i}(dt,dz,du),1\leq i\leq N,$ be i.i.d. Poisson random measures on ${%
\mathbb{R}}_{+}\times {\mathbb{R}}_{+}\times {\mathbb{R}}^{d_{0}}$ having
intensity $dtdz\pi (du)$ such that $\pi $ is centered with 
\begin{equation*}
\int_{{\mathbb{R}}^{d_0}}u^{j}u^{p}\pi (du)=\sigma ^{j,p},1\leq j,p\leq
d_{0}.
\end{equation*}%
If we put 
\begin{equation*}
v_{s,t}^{i}(\omega ,x^{i})=\int_{]s,t]}\int_{{\mathbb{R}}_{+}}\int_{{\mathbb{%
R}}^{d_0}}u1_{\{z\leq \gamma (x^{i})\}}M^{i}(dz,du,dr),
\end{equation*}%
then $f(x^{i})=\gamma (x^{i})\sigma .$
\end{example}

We come now back to the function $f$ defined in (\ref{MD1}). We assume that $%
f:{\mathbb{R}}^{d}\rightarrow {\mathbb{R}}^{d_0\times d_0}$ is Lipschitz
continuous; e.g. for all $z,z^{\prime }\in {\mathbb{R}}^{d},$ 
\begin{equation*}
\Vert f(z)-f(z^{\prime })\Vert _{op}\leq L_{f}|z-z^{\prime }|.
\end{equation*}%
We also suppose that $f$ is uniformly positive definite, that is, there
exists $f_{\ast }>0$ such that for every $x\in {\mathbb{R}}^{d},$%
\begin{equation*}
\inf_{y\in {\mathbb{R}}^{d_0}:\left\vert y\right\vert
=1}\sum_{j,p=1}^{d_{0}}f^{j,p}(x)y^{j}y^{p}\geq f_{\ast }>0.
\end{equation*}%
Finally, we assume there exists $f^{\ast }<\infty $ such that for all $x\in {%
\mathbb{R}}^{d},$ 
\begin{equation*}
\sup_{y\in {\mathbb{R}}^{d_0}:\left\vert y\right\vert
=1}\sum_{j,p=1}^{d_{0}}f^{j,p}(x)y^{j}y^{p}\leq f^{\ast }<\infty .
\end{equation*}%
If all these conditions are verified, we say that $\mathbf{A(f)}$ holds.

We now introduce $F_{N}:{\mathbb{R}}^{d\times N}\rightarrow {\mathbb{R}}%
^{d_0\times d_{0}}$ given by 
\begin{equation*}
F_{N}(x)=\mu _{N}(x,f)=\frac{1}{N}\sum_{i=1}^{N}f(x^{i})\in {\mathbb{R}}%
^{d_0\times d_{0}},\qquad x=(x^{1},...,x^{N})\in {\mathbb{R}}^{d\times N}.
\end{equation*}%
Clearly, 
\begin{equation*}
E(S_{s,t}^{N}(x,f)^{j}S_{s,t}^{N}(x,f)^{p})=F_{N}^{j,p}(x)(t-s).
\end{equation*}%
Notice that $F_{N}$ is bounded and Lipschitz continuous, since we consider
the norm $\left\vert x\right\vert _{N}^{2}=\frac{1}{N}\sum_{i=1}^{N}\left%
\vert x^{i}\right\vert ^{2}$ on ${\mathbb{R}}^{d\times N}.$ Moreover for
every $N$ and every $x\in {\mathbb{R}}^{d\times N},$%
\begin{equation*}
\sup_{y\in {\mathbb{R}}^{d_0},\left\vert y\right\vert
=1}\sum_{j,p=1}^{d_{0}}F_{N}^{j,p}(x)y^{j}y^{p}\leq f^{\ast }
\end{equation*}%
and also 
\begin{equation}
\inf_{y\in {\mathbb{R}}^{d_0},\left\vert y\right\vert
=1}\sum_{j,p=1}^{d_{0}}F_{N}^{j,p}(x)y^{j}y^{p}\geq f_{\ast }>0.
\label{sigmaL}
\end{equation}

As a consequence, for any $x\in {\mathbb{R}}^{d\times N},$ there exists a
unique $d_{0}\times d_{0}-$matrix 
\begin{equation}
\sigma _{N}(x)=\sqrt{F_{N}(x)}  \label{sigmaN}
\end{equation}%
which is symmetric and positive definite such that $\sigma _{N}(x)\sigma
_{N}(x)=F_{N}(x),$ and such that $\sigma _{N}(x)$ is Lipschitz continuous
and bounded as a function of $x.$

We now define 
\begin{equation}
\overline{S}_{s,t}^{N}(x,f)=\sigma _{N}(x)(W_{t}-W_{s})\in {\mathbb{R}}%
^{d_{0}},  \label{pf5}
\end{equation}%
where $W$ is the $d_{0}-$dimensional Brownian motion introduced in the
beginning of this section. We notice that $\overline{S}_{s,t}^{N}(x,f)$ is a
centered $d_{0}-$dimensional Gaussian vector which has the same covariance
matrix $F_{N}(x)$ as $S_{s,t}^{N}(x,f),$ that is, 
\begin{equation*}
E(S_{s,t}^{N}(x,f)^{j}S_{s,t}^{N}(x,f)^{p})=E(\overline{S}_{s,t}^{N}(x,f)^{j}%
\overline{S}_{s,t}^{N}(x,f)^{p})=F_{N}^{j,p}(x).
\end{equation*}%
Finally, we put, for any $X=(X^{1},\ldots ,X^{N})\in L_{s}^{2}({\mathbb{R}}%
^{d\times N}),$ 
\begin{equation}
\bar{F}_{s}^{W}(X)=E(F_{N}(X)|\mathcal{F}_{s}^{W}),
\end{equation}%
an almost surely symmetric and positive definite $d_{0}\times d_{0}-$matrix
satisfying almost surely, 
\begin{equation*}
\inf_{y\in {\mathbb{R}}^{d_0},\left\vert y\right\vert
=1}\sum_{j,p=1}^{d_{0}}(\bar{F}_{s}^{W})^{j,p}y^{j}y^{p}\geq f_{\ast }>0,
\end{equation*}%
such that 
\begin{equation}
\bar{\sigma}_{s}^{W}(X)=\sqrt{\bar{F}_{s}^{W}(X)},\;\bar{\sigma}_{s}^{W}(X)%
\bar{\sigma}_{s}^{X}(X)=\bar{F}_{s}^{W}(X),  \label{barsigmas}
\end{equation}%
is well-defined and defines another almost surely symmetric and positive
definite matrix. Notice that whenever $X^{1},\ldots ,X^{N}$ are i.i.d.,
conditionally on $\mathcal{F}_{s}^{W},$ then 
\begin{equation}
\bar{F}_{s}^{W}(X)=E(f(X^{1})|\mathcal{F}_{s}^{W}).
\end{equation}

Having this in mind, we finally introduce 
\begin{equation}
\widehat{S}_{s,t}(X,f)=\bar{\sigma}_{s}^{W}(X)(W_{t}-W_{s})\in {\mathbb{R}}%
^{d_{0}},
\end{equation}%
which, conditionally on $\mathcal{F}_{s}^{W},$ is a Gaussian vector having
the covariance matrix $\bar{F}_{s}^{W}(X).$

\begin{remark}
Recalling our hypothesis (\ref{Avi}), we see that the representation
property is satisfied if we take $W_{s,t}=W_{t}-W_{s}$ and $\Gamma _{s,t} (
w, a ) = \sqrt{a} w ,$ where for any positive definite symmetric $d_0 \times
d_0-$matrix, $\sqrt{a} $ denotes its matrix square root.
\end{remark}

In what follows, we check that the coupling property (\ref{Aiv}) is
satisfied.

\subsection{A quantified version of the Central Limit Theorem}

We start comparing the law of $S_{s,t}^{N}(x,f)$ and of $\overline{S}%
_{s,t}^{N}(x,f)$ in $W_{2}-$distance. To do so, we introduce 
\begin{equation*}
Z^{i}=\frac{1}{\sqrt{t-s}}\sigma _{N}^{-1}(x)v_{s,t}^{i}(\omega
,x^{i}),1\leq i\leq N.
\end{equation*}%
The covariance matrix of $\frac{1}{\sqrt{N}}\sum_{i=1}^{N}Z^{i}$ is the
identity matrix. Observe that, as a consequence of (\ref{MD0}), for any $%
1\leq i\leq N,$ 
\begin{equation*}
E\left\vert Z^{i}\right\vert ^{2}\leq \frac{M_{2}}{f_{\ast }},\qquad
E\left\vert Z^{i}\right\vert ^{4}\leq \frac{M_{4}}{f_{\ast }^{2}}.
\end{equation*}%
Then, Theorem 5 in the paper \cite{Bonis} of Thomas Bonis implies that 
\begin{equation}
W_{2}^{2}(\frac{1}{\sqrt{N}}\sum_{i=1}^{N}Z^{i},\gamma _{d_{0}})\leq
C_{Bonis}(M_{2},M_{4})\frac{1}{Nf_{\ast }^{4}},  \label{tclbonis}
\end{equation}%
where $C_{Bonis}(M_{2},M_{4})$ is a universal and explicit constant
depending only on the bounds $M_{2}$ and $M_{4},$ and where $\gamma _{d_{0}}=%
\mathcal{N}(0_{d_0},Id_{d_0})$ denotes the $d_{0}-$dimensional standardized
Gaussian measure. Notice that 
\begin{multline*}
W_{2}^{2}(S_{s,t}^{N}(x,f),\overline{S}_{s,t}^{N}(x,f)) \\
\leq (t-s)f^{\ast }W_{2}^{2}(\sigma
_{N}^{-1}(x)(t-s)^{-1/2}S_{s,t}^{N}(x,f),(t-s)^{-1/2}(W_{t}-W_{s})) \\
=(t-s)f^{\ast }W_{2}^{2}(\frac{1}{\sqrt{N}}\sum_{i=1}^{N}Z^{i},\gamma
_{d_{0}}).
\end{multline*}

We summarize the above discussion in the following

\begin{lemma}
Suppose that hypothesis $\mathbf{A(f)}$ holds. Then there exists a universal
constant $C_{Bonis}(M_{2},M_{4})$ such that, for every $x\in {\mathbb{R}}%
^{d\times N}$ and every $s<t,$%
\begin{equation}
W_{2}^{2}(S_{s,t}^{N}(x,f),\overline{S}_{s,t}^{N}(x,f))\leq
C_{Bonis}(M_{2},M_{4})\frac{(t-s)f^{\ast }}{Nf_{\ast }^{4}}.  \label{MD4}
\end{equation}%
As a consequence, condition (\ref{Aiv}) holds with $\overline{\varepsilon }%
_{N}=C_{Bonis}(M_{2},M_{4})\frac{f^{\ast }}{f_{\ast }^{4}} N^{ - 1 }.$
\end{lemma}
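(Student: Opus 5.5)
The plan is to reduce the bound (\ref{MD4}) to the normalized multivariate CLT estimate (\ref{tclbonis}) by a linear change of variables, using that $\mathcal{W}_2$ scales under linear maps. Fix $x\in{\mathbb{R}}^{d\times N}$ and $s<t$. The covariances of $S_{s,t}^{N}(x,f)$ and of $\overline{S}_{s,t}^{N}(x,f)=\sigma_N(x)(W_t-W_s)$ are both equal to $F_N(x)(t-s)$. This suggests introducing the normalized summands
\[
Z^i=(t-s)^{-1/2}\sigma_N^{-1}(x)v^i_{s,t}(x^i),
\]
which are independent and centered, since the $v^i_{s,t}(x^i)$ are. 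Their normalized sum $N^{-1/2}\sum_i Z^i$ has identity covariance by (\ref{MD1}) and the definition (\ref{sigmaN}). At the same time, $(t-s)^{-1/2}(W_t-W_s)\sim\gamma_{d_0}$.

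\textbf{Step 1: moment bounds.} Using (\ref{sigmaL}), $\|\sigma_N^{-1}(x)\|_{op}\le f_*^{-1/2}$ uniformly in $x$ and $N$. Combined with (\ref{MD0}) this gives $E|Z^i|^2\le M_2/f_*$ and $E|Z^i|^4\le M_4/f_*^2$.

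\textbf{Step 2: the CLT input.} Invoke Theorem 5 of \cite{Bonis}. It bounds $W_2^2(N^{-1/2}\sum_i Z^i,\gamma_{d_0})$ by a constant times $N^{-1}$ for independent, not necessarily identically distributed, centered vectors with identity total covariance and uniformly bounded fourth moments. This yields (\ref{tclbonis}), with the $f_*$-dependence coming from the fourth-moment bound of Step 1. The main point to verify here is that Bonis' theorem really covers non-identically distributed summands with only the averaged covariance normalized. If the version at hand requires identity covariance for each summand, I would instead apply the result to the block $\sum_i Z^i$ directly, or use the general independent-sum form with a fourth-moment (Lyapunov-type) quantity $\frac1{N^2}\sum_i E|Z^i|^4$, which is still $O(N^{-1})$.

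\textbf{Step 3: transfer back.} For any coupling $(U,V)$ of $N^{-1/2}\sum_i Z^i$ and $\gamma_{d_0}$, the pair
\[
\bigl(\sqrt{t-s}\,\sigma_N(x)U,\ \sqrt{t-s}\,\sigma_N(x)V\bigr)
\]
is a coupling of $S^N_{s,t}(x,f)$ and $\overline S^N_{s,t}(x,f)$. Indeed, $\sqrt{t-s}\,\sigma_N(x)N^{-1/2}\sum_iZ^i=S^N_{s,t}(x,f)$, and $\sqrt{t-s}\,\sigma_N(x)V$ has the law of $\sigma_N(x)(W_t-W_s)$. Since $\|\sigma_N(x)\|_{op}^2\le f^*$, taking the optimal coupling gives
\[
W_2^2(S^N_{s,t},\overline S^N_{s,t})\le (t-s)f^*\,W_2^2\Bigl(N^{-1/2}\sum_i Z^i,\gamma_{d_0}\Bigr).
\]
Combining this with Step 2 gives (\ref{MD4}). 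The identification $\overline\varepsilon_N=C_{Bonis}(M_2,M_4)f^*f_*^{-4}N^{-1}$ in (\ref{Aiv}) is then immediate, since the bound is uniform in $x$.
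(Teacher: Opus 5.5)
Your proposal is correct and follows the paper's proof essentially step for step. It uses the same normalization $Z^i=(t-s)^{-1/2}\sigma_N^{-1}(x)v^i_{s,t}(x^i)$, the same moment bounds via $f_*$, the same appeal to Bonis' Theorem 5 for the normalized CLT bound, and the same transfer back through $\|\sigma_N(x)\|_{op}^2\le f^*$. The paper likewise leaves implicit the point you flag, namely that the $Z^i$ are independent but not identically distributed and only their averaged covariance is the identity.
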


\subsection{The Lipschitz property (\protect\ref{Aiii}) and the law of large
numbers (\protect\ref{Av})}

Recall that $\Vert A\Vert _{HS}$ denotes the Hilbert-Schmidt norm of a $%
d_{0}\times d_{0}-$matrix $A$ and that $\sigma _{N}(X)$ is the matrix square
root of $F_{N}(X).$

\begin{proposition}
\label{prop:52} Let $X = (X^1, \ldots, X^N )$ and $\bar X = (\bar X^1,
\ldots, \bar X^N) \in L^2_s ( {\mathbb{R}}^{d \times N} ).$ Then we have
that 
\begin{equation*}
\| \overline{S}_{s,t}^{N}(X,f) - \overline{S}_{s,t}^{N}(\bar X,f)\|_2^2 = E
( \| \sigma_N ( X) - \sigma_N ( \bar X) \|_{HS}^2 ) (t-s) .
\end{equation*}
\end{proposition}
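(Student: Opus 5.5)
The plan is to write the difference as a single random matrix applied to the Brownian increment and then use independence. By (\ref{pf5}),
\[
\overline{S}_{s,t}^{N}(X,f)-\overline{S}_{s,t}^{N}(\bar X,f)=\Delta\,(W_t-W_s),\qquad \Delta:=\sigma_N(X)-\sigma_N(\bar X).
\]
Here $\Delta$ is a $d_0\times d_0$ random matrix. It is $\mathcal{F}_s$-measurable, because $X,\bar X\in L^2_s$ and $\sigma_N$ is a deterministic Borel function; indeed it is Lipschitz and bounded by $\sqrt{f^\ast}$ in operator norm, so $E\|\Delta\|_{HS}^2<\infty$. The increment $W_t-W_s$ is independent of $\mathcal{F}_s$. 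I take this from Assumption \ref{ass:perturbation} ii), which says $\overline{S}^N_{s,t}(x)=\sigma_N(x)(W_t-W_s)$ is independent of $\mathcal{F}_s$ for each fixed $x$. Since $\sigma_N(x)$ is invertible by (\ref{sigmaL}), this is the same as $W_t-W_s$ being independent of $\mathcal{F}_s$.

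The computation then goes by conditioning on $\mathcal{F}_s$. Expand $|\Delta w|^2=\sum_{j}\bigl(\sum_p\Delta^{j,p}w^p\bigr)^2=\sum_{j,p,q}\Delta^{j,p}\Delta^{j,q}w^pw^q$. Taking $E_s$ and using independence (freezing lemma), we get $E_s[(W_t-W_s)^p(W_t-W_s)^q]=\delta_{pq}(t-s)$. Hence
\[
E_s|\Delta(W_t-W_s)|^2=(t-s)\sum_{j,p}(\Delta^{j,p})^2=(t-s)\|\Delta\|_{HS}^2 .
\]
Taking expectations gives the claimed identity $\|\overline{S}_{s,t}^{N}(X,f)-\overline{S}_{s,t}^{N}(\bar X,f)\|_2^2=(t-s)E\|\sigma_N(X)-\sigma_N(\bar X)\|_{HS}^2$. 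Integrability of all terms follows from the boundedness of $\sigma_N$.

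The only point needing care is the justification that $W_t-W_s$ is independent of $\mathcal{F}_s$, that is, that $W$ is an $(\mathcal{F}_t)$-Brownian motion rather than merely a Brownian motion. I would state it explicitly as part of the setting of this section and derive it from Assumption \ref{ass:perturbation} ii) as above. The rest is a routine Gaussian second-moment computation.
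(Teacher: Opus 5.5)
Your proof is correct and follows the paper's argument. You condition on $\mathcal{F}_s$ so that $\Delta=\sigma_N(X)-\sigma_N(\bar X)$ is frozen, compute $E_s|\Delta(W_t-W_s)|^2=(t-s)\|\Delta\|_{HS}^2$, and take expectations. Your derivation of the independence of $W_t-W_s$ from $\mathcal{F}_s$, via Assumption \ref{ass:perturbation} ii) and the invertibility of $\sigma_N(x)$, supplies a detail the paper leaves implicit.
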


\begin{proof}
We have that $\overline{S}_{s,t}^{N}(X,f) - \overline{S}_{s,t}^{N}(\bar
X,f)= ( \sigma_N (X) - \sigma_N ( \bar X) ) (W_t- W_s) .$ In what follows,
we work conditionally on $\mathcal{F}_s, $ such that $A :=( \sigma_N (X) -
\sigma_N ( \bar X) )$ is a deterministic symmetric matrix and $G_N := A
(W_t- W_s) $ a Gaussian vector with covariance matrix $(t-s) A A .$ So,
conditionally on $\mathcal{F}_s, $ 
\begin{equation*}
E_s( |G_N|^2) = \sum_{i=1}^{d_0} E_s ((G_N^i)^2 ) = (t-s) \sum_{i=1}^{d_0}
\sum_{k=1}^{d_0} (A^{i,k})^2 = (t-s) \| A \|_{HS}^2.
\end{equation*}
Taking expectation then yields the result.
\end{proof}

In what follows we will use the following fact which is a consequence of
results obtained by Yu. B. Farforovskaya in \cite{Farf}.

\begin{proposition}
\label{prop:53} Let $X=(X^{1},\ldots ,X^{N})$ and $\bar{X}=(\bar{X}%
^{1},\ldots ,\bar{X}^{N})\in L_{s}^{2}({\mathbb{R}}^{d\times N}).$ Then
almost surely, 
\begin{equation*}
\Vert \sqrt{F_{N}(X)}-\sqrt{F_{N}(\bar{X})}\Vert _{op}\leq \frac{8(\log
d_{0}+2)^{2}}{\sqrt{f_{\ast }}}\Vert F_{N}(X)-F_{N}(\bar{X})\Vert _{op}.
\end{equation*}%
In particular, recalling that $\Vert A\Vert _{op}^{2}\leq \Vert A\Vert
_{HS}^{2}\leq d_{0}\Vert A\Vert _{op}^{2},$ and using the Lipschitz
continuity of $f,$ 
\begin{eqnarray}
\Vert \overline{S}_{s,t}^{N}(X,f)-\overline{S}_{s,t}^{N}(\bar{X},f)\Vert
_{2}^{2} &\leq &\frac{C_{d_{0}}}{f_{\ast }}L_{f}^{2}(t-s)E|X-\bar{X}%
|_{N}^{2},  \label{eq:ovs} \\
with\quad C_{d_{0}} &=&8^{2}d_{0}(\log d_{0}+2)^{4}.
\end{eqnarray}%
As a consequence, the Lipschitz property (\ref{Aiii}) holds with $\overline{L%
}=\frac{C_{d_{0}}}{f_{\ast }}L_{f}^{2}.$
\end{proposition}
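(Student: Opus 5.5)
The plan is to establish the operator-norm estimate for the matrix square root first, and then derive the $L^2$ Lipschitz bound for $\overline{S}^N$ from it by way of Proposition \ref{prop:52}.

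\textbf{Square-root estimate.} We work pathwise, fixing $\omega$, so that $A=F_N(X)$ and $B=F_N(\bar X)$ are deterministic symmetric matrices. By (\ref{sigmaL}) both satisfy $A,B\geq f_\ast \,Id$, and both are bounded above by $f^\ast$. On the half-line $[f_\ast,\infty)$ the map $\lambda\mapsto\sqrt{\lambda}$ is Lipschitz with constant $1/(2\sqrt{f_\ast})$. Scalar Lipschitz continuity does not transfer directly to operator Lipschitz continuity. However, Farforovskaya's theorem \cite{Farf} covers this: for a function $g$ that is Lipschitz on an interval containing the spectra of two $d_0\times d_0$ Hermitian matrices, one has $\Vert g(A)-g(B)\Vert_{op}\leq c(\log d_0+2)^2 L_g\Vert A-B\Vert_{op}$ with an explicit universal $c$. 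To place ourselves in that setting, we extend $\sqrt{\cdot}$ restricted to $[f_\ast,\infty)$ to a function on all of $\mathbb R$ with the same Lipschitz constant. Since the spectra of $A$ and $B$ lie in $[f_\ast,f^\ast]$, this extension coincides with the square root on them. Tracking the constants then gives the factor $8(\log d_0+2)^2/\sqrt{f_\ast}$; if needed, the crude estimate $1/(2\sqrt{f_\ast})\le 1/\sqrt{f_\ast}$ absorbs any mismatch in $c$.

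\textbf{$L^2$ Lipschitz bound.} By Proposition \ref{prop:52},
\[
\Vert \overline{S}^N_{s,t}(X,f)-\overline{S}^N_{s,t}(\bar X,f)\Vert_2^2=(t-s)E\Vert\sigma_N(X)-\sigma_N(\bar X)\Vert_{HS}^2 .
\]
We bound the Hilbert--Schmidt norm by $d_0\Vert\cdot\Vert_{op}^2$ and then apply the square-root estimate, which yields a factor $64(\log d_0+2)^4 d_0/f_\ast$ times $E\Vert F_N(X)-F_N(\bar X)\Vert_{op}^2$. By the triangle inequality, the Lipschitz continuity of $f$ and Cauchy--Schwarz,
\[
\Vert F_N(X)-F_N(\bar X)\Vert_{op}\le \frac1N\sum_i L_f|X^i-\bar X^i|\le L_f|X-\bar X|_N .
\]
Taking expectations gives (\ref{eq:ovs}) with $C_{d_0}=8^2d_0(\log d_0+2)^4$. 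Specializing to deterministic $x,\bar x$ gives (\ref{Aiii}) with $\overline L=C_{d_0}L_f^2/f_\ast$.

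\textbf{Main obstacle.} The only delicate point is the first step: invoking Farforovskaya's operator-Lipschitz estimate correctly. The square root is not operator Lipschitz on $[0,\infty)$, so the uniform lower bound $f_\ast$, together with the extension argument, is what makes the constant finite and dimension-dependent only through $(\log d_0+2)^2$. The remaining steps are routine norm comparisons.
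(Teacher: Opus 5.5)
Your proposal is correct and follows the paper's argument. The paper likewise replaces $\sqrt{\cdot}$ by a Lipschitz function $\phi$ on $\mathbb{R}_+$, namely $\phi(t)=t/\sqrt{f_\ast}$ on $[0,f_\ast]$ and $\sqrt{t}$ beyond, with Lipschitz constant $1/\sqrt{f_\ast}$, that agrees with the square root on the spectra of $F_N(X)$ and $F_N(\bar X)$; it then applies Farforovskaya's finite-dimensional estimate and obtains (\ref{eq:ovs}) via Proposition \ref{prop:52}, $\Vert\cdot\Vert_{HS}^2\le d_0\Vert\cdot\Vert_{op}^2$ and the Lipschitz property of $f$, as you do. Your extension keeps the sharper constant $1/(2\sqrt{f_\ast})$, which only leaves a harmless factor $2$ of slack relative to the stated bound.
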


\begin{proof}
For the sake of completeness we give the argument. Let $\phi :{\mathbb{R}}%
_{+}\rightarrow {\mathbb{R}}_+$ be defined by $\phi( t) = t/ \sqrt{f_*} , $
if $0 \le t \le f_*, $ and $\phi( t) = \sqrt{t} $ for $t \geq f_*.$ Then $%
\phi $ is Lipschitz with Lipschitz constant $L_\phi = \frac{1}{ \sqrt{f_*}}
. $ By (\ref{sigmaL}), the spectrum of $F_{N}(x)$ is included in $(f_{\ast
},\infty )$ so that $\sigma _{N}(x)=\phi (F_{N}(x)).$ Then the first
inequality above follows from Farforovskaya's result concerning Lipschitz
continuous functions of self adjoint operators with finite spectrum, see 
\cite{Farf}.
\end{proof}

\label{sec:42}

We discuss now $A(v)$. Recall that $\Vert A\Vert _{HS}$ denotes the
Hilbert-Schmidt norm of a $d_{0}\times d_{0}-$matrix $A.$

\begin{proposition}
\label{prop:56} Let $X=(X^{1},\ldots ,X^{N})\in L_{s}^{2}({\mathbb{R}}%
^{d\times N}).$ Then we have that 
\begin{equation*}
\Vert \widehat{S}_{s,t}^{N}(X,f)-\overline{S}_{s,t}^{N}(X,f)\Vert
_{2}^{2}\leq \frac{C_{d_{0}}}{f_{\ast }}(t-s)E\Vert F_{N}(X)-\bar{F}%
_{s}^{W}(X)\Vert _{HS}^{2}.
\end{equation*}%
If moreover the $X^{1},\ldots ,X^{N}$ are conditionally independent,
conditionally on $\mathcal{F}_{s}^{W},$ then 
\begin{equation}
E\Vert F_{N}(X)-\bar{F}_{s}^{W}(X)\Vert _{HS}^{2}\leq 4d_{0}(f^{\ast
})^{2}N^{-1}
\end{equation}%
and 
\begin{equation}
\Vert \widehat{S}_{s,t}^{N}(X,f)-\overline{S}_{s,t}^{N}(X,f)\Vert
_{2}^{2}\leq \frac{4d_{0}C_{d_{0}}(f^{\ast })^{2}}{f_{\ast }}(t-s)N^{-1}.
\label{eq:condind}
\end{equation}%
As a consequence, (\ref{Av}) holds with $\widehat{\varepsilon }_{N}=\frac{%
4d_{0}C_{d_{0}}(f^{\ast })^{2}}{f_{\ast }}N^{-1}.$
\end{proposition}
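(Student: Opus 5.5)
The first bound should follow the proof of Proposition \ref{prop:52}, with $\bar\sigma_s^W(X)$ in place of $\sigma_N(\bar X)$. Both $\sigma_N(X)$ and $\bar\sigma_s^W(X)$ are $\mathcal{F}_s$-measurable: the first because $X\in L^2_s$, the second because $\mathcal{F}_s^W\subset\mathcal{F}_s$. The increment $W_t-W_s$ is independent of $\mathcal{F}_s$. Conditioning on $\mathcal{F}_s$ therefore gives
\[
\Vert \widehat{S}_{s,t}^{N}(X,f)-\overline{S}_{s,t}^{N}(X,f)\Vert_2^2=(t-s)\,E\Vert \bar\sigma_s^W(X)-\sigma_N(X)\Vert_{HS}^2 .
\]
Next I bound the HS norm by the operator norm, $\Vert A\Vert_{HS}^2\le d_0\Vert A\Vert_{op}^2$. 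Then I apply the Farforovskaya estimate of Proposition \ref{prop:53} to the two matrices $F_N(X)$ and $\bar F_s^W(X)$. Both have spectrum in $[f_*,f^*]$; for $\bar F_s^W(X)$ this holds because it is a conditional expectation of matrices whose quadratic forms lie in that range. So the same Lipschitz function $\phi$ from Proposition \ref{prop:53} represents both square roots. This gives $\Vert\sqrt{F_N}-\sqrt{\bar F}\Vert_{op}^2\le \frac{64(\log d_0+2)^4}{f_*}\Vert F_N-\bar F\Vert_{op}^2$. Using $\Vert\cdot\Vert_{op}\le\Vert\cdot\Vert_{HS}$ then yields the first inequality with $C_{d_0}=64\,d_0(\log d_0+2)^4$.

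For the conditional variance bound, I assume $X^1,\dots,X^N$ are conditionally i.i.d.\ given $\mathcal{F}_s^W$; identical distribution is needed to have $\bar F_s^W(X)=E(f(X^1)|\mathcal{F}_s^W)$, and this is the setting of the theorem. Write $F_N(X)-\bar F_s^W(X)=\frac1N\sum_{i}Y_i$ with $Y_i=f(X^i)-E(f(X^i)|\mathcal{F}_s^W)$. Each entry of $Y_i$ is conditionally centered, and for $i\neq j$ the entries are conditionally independent. Hence the cross terms vanish in the entrywise expansion of $\Vert\cdot\Vert_{HS}^2$, and
\[
E\Vert F_N-\bar F\Vert_{HS}^2=\frac1{N^2}\sum_i E\Vert Y_i\Vert_{HS}^2 .
\]
Since $\Vert f\Vert_{op}\le f^*$, we have $\Vert Y_i\Vert_{op}\le 2f^*$, so $\Vert Y_i\Vert_{HS}^2\le d_0\cdot 4(f^*)^2$. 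This gives $4d_0(f^*)^2N^{-1}$. Combining the two bounds gives (\ref{eq:condind}), and therefore (\ref{Av}) with the stated $\widehat\varepsilon_N$.

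The main obstacle is justifying the Farforovskaya step for the random matrix $\bar F_s^W(X)$. One must check that it is a.s.\ symmetric with eigenvalues in $[f_*,f^*]$, so that $\bar\sigma_s^W(X)=\phi(\bar F_s^W(X))$ with the same $\phi$. This follows by taking conditional expectations of the quadratic forms $y^\top f(X^1)y\in[f_*,f^*]$ for $|y|=1$, using a regular version of the conditional law on a countable dense set of $y$. Everything else is routine orthogonality.
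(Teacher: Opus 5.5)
Your proposal is correct and follows essentially the same route as the paper. The first bound comes from the conditioning argument of Proposition~\ref{prop:52} together with the Farforovskaya estimate of Proposition~\ref{prop:53}, with the same constant $C_{d_0}=8^2 d_0(\log d_0+2)^4$, and the variance bound comes from conditional orthogonality of the centered terms $f(X^i)-E(f(X^i)\mid\mathcal{F}_s^W)$. The only cosmetic difference is that you bound $\Vert Y_i\Vert_{op}\le 2f^{\ast}$ directly, whereas the paper works entrywise with $(a-b)^2\le 2a^2+2b^2$ and conditional Jensen before using $\Vert f\Vert_{HS}^2\le d_0(f^{\ast})^2$; both give $4d_0(f^{\ast})^2N^{-1}$.
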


\begin{proof}
The first upper bound follows using the arguments of Propositions \ref%
{prop:52} and \ref{prop:53}. We now prove (\ref{eq:condind}) under the
additional condition that $X^{1},\ldots ,X^{N}$ are i.i.d., conditionally on 
$\mathcal{F}_{s}^{W}.$ It suffices to evaluate $E\Vert F_{N}(X)-\bar{F}%
_{s}^{W}(X)\Vert _{HS}^{2}.$ To do so, observe that 
\[
E\Vert F_{N}(X)-\bar{F}_{s}^{W}(X)\Vert _{HS}^{2} 
=\sum_{j,k=1}^{d_{0}}E\left( \frac{1}{N}\sum_{i=1}^{N}\left(
f^{j,k}(X^{i})-E(f^{j,k}(X^{i})|\mathcal{F}_{s}^{W})\right) \right) ^{2},
\]
which by conditional independence and exchangeability of the $X^{i}$ equals 
\begin{equation*}
\frac{1}{N}\sum_{j,k=1}^{d_{0}}E\left( \left(
f^{j,k}(X^{i})-E(f^{j,k}(X^{i})|\mathcal{F}_{s}^{W})\right) ^{2}\right) \leq 
\frac{4}{N}\sum_{j,k=1}^{d_0}E\left( \left( f^{j,k}(X^{i})\right)
^{2}\right) ,
\end{equation*}%
where we have used that $(a-b)^{2}\leq 2a^{2}+2b^{2}$ and Jensen's
inequality.

We conclude observing that 
\begin{equation*}
\sum_{j,k=1}^{d_{0}}\left( f^{j,k}(X^{i})\right) ^{2}=\Vert f\Vert
_{HS}^{2}(X^{i})\leq d_0 \Vert f\Vert _{op}^{2}(X^{i})\leq d_0 (f^{\ast
})^{2},
\end{equation*}%
by assumption on $f.$
\end{proof}

\section{Example}

\label{sec:5} In this section we discuss a specific particle system to which
we apply the previous results in the CLT setting. Our goal is to clarify the
main objects of the paper and to illustrate our results, and so we will
impose strong conditions on the coefficients to keep this section as light
as possible. First of all, to ease the reading, we only consider the case $%
d_0=1.$ For $i=1,...,N,$ we consider the system of particles taking values
in ${\mathbb{R}}$ and obeying, for any $1\leq i\leq N,$ and $0\leq s \le t <
\infty , $%
\begin{eqnarray}
\mathbf{X}_{s,t}^{N,i} &=&\mathbf{X}^{N,i}+\int_{s}^{t}\sigma _{0}(\mathbf{X}%
_{s,r}^{N,i},\mathbf{S}_{s,r}^{N}(\mathbf{X)},\mu _{N}(\mathbf{X}%
_{s,r}^{N},\varphi ))dr  \label{EX1} \\
& & +\int_{s}^{t}\sigma _{1}(\mathbf{X}_{s,r}^{N,i},\mathbf{S}_{s,r}^{N}(%
\mathbf{X)} , \mu _{N}(\mathbf{X}_{s,r}^{N},\varphi ))dB_{r}^{i}  \notag \\
&&
+\int_{]s,t]}\int_{E_{1}}c(u,\mathbf{X}_{s,r}^{N,i},\mathbf{S}_{s,r}^{N}(%
\mathbf{X)} , \mu _{N}(\mathbf{X}_{s,r}^{N},\varphi ))N^{i}(dr,du) ,  \notag
\end{eqnarray}
with 
\begin{eqnarray*}
\mathbf{S}_{s,t}^{N}(\mathbf{X)} &:= &\frac{1}{\sqrt{N}}\left(
\sum_{j=1}^{N}\int_{s}^{t}\overline{\sigma }(\mathbf{X}_{s,r}^{N,j})d%
\overline{B}_{r}^{j}+\sum_{j=1}^{N}\int_{]s,t]}\int_{E_{2}}\overline{c}(u,%
\mathbf{X}_{s,r}^{N,j}))\widetilde{M}^{j}(dr,du)\right) ,  \notag \\
\mu _{N}(\mathbf{X}_{s,t}^{N},\varphi ) &:= &\frac{1}{N}\sum_{j=1}^{N}%
\varphi (\mathbf{X}_{s, t}^{N,j}) .  \notag
\end{eqnarray*}%
Here, $B^{i},\overline{B}^{i},i=1,\ldots ,N,$ are independent standard
Brownian motions. Moreover, $N^{i}$ and $M^{i},i=1,\ldots ,N,$ are
independent Poisson random measures on some measurable space $(E_{1},\nu
_{1})$ ($(E_{2},\nu _{2}),$ respectively), with intensity $dr\nu _{1}(du)$ ($%
dr\nu _{2}(du),$ respectively). $\nu _{1}$ and $\nu _{2}$ are sigma-finite
measures on $E_{1}$ ($E_{2},$ respectively). For any $1 \le i \le N, $ the
measure $\widetilde{M}^{i}(dr,du)=M^{i}(dr,du)-dr\nu _{2}(du)$ is the
compensated Poisson random measure, and $\mu _{N}(\mathbf{X}_{r}^{N},dx)=%
\frac{1}{N}\sum_{i=1}^{N}\delta _{\mathbf{X}_{r}^{N,i}}(dx)$ is the
empirical measure of the system

The coefficients of the system are 
\begin{eqnarray*}
\sigma _{j} &:&{\mathbb{R}}^{2}\times {\mathbb{R}}^{m}\rightarrow {\mathbb{R}%
},j=0,1,\quad \overline{\sigma }:{\mathbb{R}}\rightarrow {\mathbb{R}}, \\
c &:&E_{1}\times {\mathbb{R}}^{2}\times {\mathbb{R}}^{m}\rightarrow {\mathbb{%
R}},\quad \overline{c}:E_{2}\times {\mathbb{R}}\rightarrow {\mathbb{R}}.
\end{eqnarray*}

We assume that all our coefficients are bounded, that $\varphi :{\mathbb{R}}%
\rightarrow {\mathbb{R}}^{m}$ is moreover Lipschitz, and we impose the usual
Lipschitz continuity conditions on the other coefficients, i.e.:%
\begin{equation}
\sum_{j=0}^{1}\left\vert \sigma _{j}(x,s, a)-\sigma _{j}(\overline{x},\bar
s, \overline{a})\right\vert +\left\vert \overline{\sigma }(x)-\overline{%
\sigma }(\overline{x})\right\vert \leq L_{\sigma }(\left\vert x-\overline{x}%
\right\vert + | s - \bar s | + \left\vert a-\overline{a}\right\vert ),
\label{EX2a}
\end{equation}%
and, for convenient positive measurable functions $\bar{c}_{1}$ and $\bar{c}%
_{2},$ 
\begin{eqnarray}
\left\vert c(u,x,s, a)-c(u,\overline{x},\bar s, \overline{a})\right\vert
&\leq &\bar{c}_{1}(u)(\left\vert x-\overline{x}\right\vert +|s - \bar s | +
\left\vert a-\overline{a}\right\vert ),  \label{EX2b} \\
\left\vert \overline{c}(u,x)-\overline{c}(u,\overline{x})\right\vert &\leq &%
\overline{c}_{2}(u)\left\vert x-\overline{x}\right\vert ,  \label{EX2bb}
\end{eqnarray}
where 
\begin{equation}
\int_{E_{1}}(\overline{c}_{1}(u)+\overline{c}_{1}^{2}(u))d\nu
_{1}(u)+\int_{E_{2}}( \overline{c}_2 (u ) + \overline{c}_{2}^{2}(u) ) d\nu
_{2}(u)<\infty .  \label{EX2c}
\end{equation}

We introduce now the associated limit equation. We define%
\begin{equation}
f(x)=\bar{\sigma}^{2}(x)+\int_{E_{2}}\bar{c}^{2}(u,x)d\nu _{2}(u),
\label{EX5}
\end{equation}%
and we assume that for all $x,$ 
\begin{equation*}
0<f_{\ast }\leq f(x)\leq f^{\ast } .
\end{equation*}%
Notice that as a consequence of (\ref{EX2a}) and (\ref{EX2bb}), $f$ is
Lipschitz continuous.

The associated limit equation is given by 
\begin{eqnarray*}
\widehat{\mathbf{X}}_{s,t}^{N,i} &=&\mathbf{X}^{i}+\int_{s}^{t}\sigma _{0}(%
\widehat{\mathbf{X}}_{s,r}^{N,i},\widehat{\mathbf{S}}_{s,r}^{N},\bar{\mu}%
_{s,r}(\varphi ))dr+\int_{s}^{t}\sigma _{1}(\widehat{\mathbf{X}}_{s,r}^{N,i},%
\widehat{\mathbf{S}}_{s,r}^{N},\bar{\mu}_{s,r}(\varphi ))dB_{r}^{i} \\
&&+\int_{]s,t]}\int_{E_{1}}c(u,\widehat{\mathbf{X}}_{s,r}^{N,i},\widehat{%
\mathbf{S}}_{s,r}^{N},\bar{\mu}_{s,r}(\varphi ))N^{i}(dr,du)
\end{eqnarray*}%
with 
\begin{eqnarray*}
\bar{\mu}_{s,r} &=&{\mathcal{L}}(\widehat{\mathbf{X}}_{s,r}^{N,i}\mid 
\mathcal{F}_{r}^{W})\quad \mbox{and}\quad \\
\widehat{\mathbf{S}}_{s,r}^{N} &=&\int_{s}^{t}\sqrt{\bar{\mu}_{s,r}(f)}%
dW_{r}=\int_{s}^{t}\sqrt{E(f(\widehat{\mathbf{X}}_{s,r}^{N,i})\mid \mathcal{F%
}_{r}^{W})}dW_{r}
\end{eqnarray*}%
for any $i\geq 1,$ and $s \le r\leq t.$

In the above equation, we suppose that $\mathbf{X}^{i},i\geq 1,$ is an
i.i.d. sequence of random variables belonging to $L_{s}^{2},$ independent of 
$W_{s},$ and distributed according to (the deterministic measure) $\bar{\mu}%
_{s}.$

Notice that, because of the presence of the common noise $dW_{r},$ the
components $\widehat{\mathbf{X}}_{s,t}^{N,i},i=1,...,N,$ are identically
distributed, but they are not independent -- they are in fact only
independent conditionally on $W,$ and the above equation is an example of a 
\textit{conditional McKean-Vlasov} equation.

Under the hypotheses (\ref{EX2a}-\ref{EX2c}), standard arguments imply the
trajectorial uniqueness of (\ref{EX3}) (see e.g. \cite{carl} and \cite{ELL}%
). However, the presence of the conditional expectations makes the proof of
existence of the solution more delicate. In what follows, we propose to give
the proof of the existence by applying the results of the previous sections.
We will also automatically obtain an estimate of the $\mathcal{W}_{2}-$%
distance between the two systems.

\begin{theorem}
\label{Maine2} \textbf{A }Grant the assumptions (\ref{EX2a})--(\ref{EX2bb}),
and suppose moreover that $\varphi $ is Lipschitz and bounded and that $%
0<f_{\ast }\leq f(x)\leq f^{\ast }<\infty $ for every $x\in {\mathbb{R}}$.
Let $\mathbf{X}^{i},i\geq 1,$ be i.i.d., belonging to $L_{s}^{2},$
independent of $W_{s},$ and distributed according to $\bar{\mu}_{s}.$ Then
there exists a unique solution $(\widehat{\mathbf{X}}_{s,t})_{t\geq s}$ of
the system of equations (\ref{EX3}).

\noindent \textbf{B }For every $T>0,$ 
\begin{equation*}
\sup_{s \le t\leq T}\mathcal{W}_{2}(\mathbf{X}_{s,t}^{N}(X),\widehat{\mathbf{%
X}}_{s,t}(X))\leq \frac{C_{T}}{\sqrt{N}},
\end{equation*}%
where $C_{T}$ is a constant depending only on the model parameters, on $%
C_{Bonis}$ and on $T.$
\end{theorem}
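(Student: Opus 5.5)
The plan is to cast the jump-diffusion system (\ref{EX1}) into the abstract framework of Section \ref{sec:4} and then read off both assertions from Theorem \ref{theo:main} and the sewing Lemma \ref{sewing}.

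\textbf{One-step schemes.} For $x\in{\mathbb R}^{N}$, $a\in{\mathbb R}^m$ and $0\le s<t$, freeze the coefficients at time $s$ and set
\[
H^i_{s,t}(x^i,a)=\sigma_0(x^i,0,a)(t-s)+\sigma_1(x^i,0,a)(B^i_t-B^i_s)+\int_{]s,t]}\int_{E_1}c(u,x^i,0,a)N^i(dr,du).
\]
The $\mathbf S$-argument vanishes at the left endpoint because $\mathbf S_{s,s}=0$. Then take
\[
v^i_{s,t}(x^i)=\bar\sigma(x^i)(\bar B^i_t-\bar B^i_s)+\int_{]s,t]}\int_{E_2}\bar c(u,x^i)\widetilde M^i(dr,du).
\]
Let $\mathcal G^1_{s,t}$ be generated by the increments of $B^i$ and $N^i$ on $]s,t]$, and $\mathcal G^2_{s,t}$ by those of $\bar B^i$ and $M^i$.

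Assumption \ref{ass:H} then follows from boundedness and the Lipschitz bounds (\ref{EX2a})--(\ref{EX2c}):
\begin{itemize}
\item (\ref{pf1}) holds because $E_s H$ is $\sigma_0(t-s)+(t-s)\int c\,d\nu_1$.
\item (\ref{pf1'}) follows from the It\^o isometry together with $\int\bar c_1^2\,d\nu_1<\infty$.
\end{itemize}
The CLT structure of Section \ref{sec:CLT} holds with $f$ given by (\ref{EX5}). The fourth-moment bound (\ref{MD0}) follows from BDG and boundedness; one should check it in the form $E|v|^4\le M_4(t-s)^2$, at least for $t-s\le 1$, using $\int\bar c^4\,d\nu_2$, which is controlled since $\bar c$ is bounded. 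Hence the Lemma of Section \ref{sec:CLT} and Propositions \ref{prop:53} and \ref{prop:56} give Assumptions \ref{ass:perturbation} and \ref{ass:perturbationbis}, with $\bar\varepsilon_N+\widehat\varepsilon_N=O(N^{-1})$ and $\kappa=1/2$. Theorem \ref{theo:main} \textbf{A} then yields $\mathcal W_2^2\le C_T/N$ for the Euler schemes, uniformly in $n$.

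\textbf{Sewing property and flows.} Next I will verify $\mathbf{A}(\beta)$ for $\mathcal X^N$ and $\widehat{\mathcal X}^N$. Property $\mathbf{(G)}$ is immediate from bounded coefficients, and $\mathbf{(L)}$ follows as in Lemma \ref{lem:44}.

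The main obstacle is $\mathbf{S}(\beta)$ for the two-step composition $\mathcal X_{r,t}\circ\mathcal X_{s,r}$ versus $\mathcal X_{s,t}$. On $[r,t]$ the difference comes from evaluating the coefficients at $\mathcal X_{s,r}(X)$ rather than at $X$, and at $\mathbf S_{s,r}$ rather than $0$. By the Lipschitz bounds this gives
\[
E\bigl|\widetilde{\mathcal E}_{s,t}-\widetilde{\mathcal E}_{s,r,t}\bigr|^2\le C(t-r)\,E|\mathcal X_{s,r}(X)-X|^2\le C(t-s)^2(1+E|X|^2),
\]
so $\beta=1$. The drift part yields $(t-s)^2\cdot(t-s)$ after conditioning; that is, $E_r$ of the difference is Lipschitz times $(t-r)$ with an $L^2$ norm of order $(t-s)^{1/2}$.

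For $\widehat{\mathcal X}^N$ there is an extra step, and this is the delicate point. The conditional-law terms $E(\varphi(\cdot)\mid\mathcal F^W_r)$ and $\sqrt{E(f(\cdot)\mid\mathcal F^W_r)}$ must be shown to be $L^2$-Lipschitz in the initial variable. I would use Jensen for conditional expectation together with Farforovskaya's bound of Proposition \ref{prop:53}, which is valid because $f\ge f_*$.

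Lemma \ref{sewing} then provides the flows $\mathbb X^N$ and $\widehat{\mathbb X}^N$. One identifies $\mathbb X^N$ with the strong solution of (\ref{EX1}), which exists by classical Lipschitz theory; the identification holds because the Euler scheme converges to it in $L^2$ at the same rate.

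\textbf{Existence for the limit equation.} For \textbf{A}, I would show that the flow $\widehat{\mathbb X}_{s,\cdot}$ solves (\ref{EX3}). Write the Euler scheme as a discretised version of the integral equation. The Riemann--It\^o sums converge in $L^2$ by (\ref{s14}) and the Lipschitz continuity of the coefficients in $(x,\mathbf S,\bar\mu(\varphi))$. The conditional expectations converge because $\|E(g(Y_n)\mid\mathcal F^W)-E(g(Y)\mid\mathcal F^W)\|_2\le L_g\|Y_n-Y\|_2$. Conditional i.i.d.-ness passes to the limit by the Proposition of Section \ref{LN8}. Uniqueness follows from the standard Gronwall argument cited after (\ref{EX3}).

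Finally, \textbf{B} is Theorem \ref{theo:main} \textbf{B} with $\bar\varepsilon_N+\widehat\varepsilon_N\le C/N$, after taking square roots. Uniformity over $s\le t\le T$ holds because the constant $e^{QT^2}$ does not depend on $t$; for $t-s>1$ one concatenates unit intervals, with the initial error propagated by Proposition \ref{prop:Euler} and Theorem \ref{LLNW2}.
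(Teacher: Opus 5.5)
There is a genuine gap at the first step, the choice of one-step scheme. Equation (\ref{EX1}) is not a flow in $\mathbf X^N$ alone. At time $r$ its coefficients see $\mathbf S^N_{s,r}(\mathbf X)$, which is the perturbation accumulated since the \emph{initial} time $s$.

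\textbf{Why your scheme fails.} Freezing the $\mathbf S$-argument at $0$ in $H_{s,t}$ is correct only for the first step. When you compose $\mathcal X^N_{t_k,t_{k+1}}$ along $\pi_n(s,t)$, that argument is reset to $0$ at every grid point $t_k$, where its true value is $\mathbf S^N_{s,t_k}$. Moreover, with a scalar state you have $d=d_0=1$. So in (\ref{pf7}) the term $1_{d_0}S^N_{s,t}(X)$ is added to the particle position itself, whereas in (\ref{EX1}) the perturbation enters only through $\sigma_0,\sigma_1,c$. Hence your Euler schemes, and the flows that Lemma \ref{sewing} builds from them, approximate a different system, not (\ref{EX1}). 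The same applies to the limit equation, whose coefficients must contain $\widehat{\mathbf S}_{s,r}$. Your sewing estimate tacitly evaluates $\mathcal X_{r,t}$ at $\mathbf S_{s,r}$. But a map on $L^2_r$ cannot see $\mathbf S_{s,r}$ unless it is part of the state.

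\textbf{The missing idea} is the paper's Markovian lift. Take $d=2$, $d_0=1$ and particles $(\mathbb X^{N,i,1},\mathbb X^{N,i,2})=(\mathbf X^{N,i},\mathbf S^N)$. Let $H^{i,1}_{s,t}$ use $\sigma_0(x^1,x^2,a)$, $\sigma_1(x^1,x^2,a)$ and $c(u,x^1,x^2,a)$, set $H^{i,2}_{s,t}=0$, and let $v^i_{s,t}$ and $f$ depend on $x^{i,1}$ only. Then $1_{d_0}S^N$ feeds exactly the second coordinate, and the lifted system (\ref{EX1'}) is Markov. Theorem \ref{theo:main} applies to the couples started at $(\mathbf X^i,0)$, which are still conditionally i.i.d. Finally, $\mathcal W_2$ of the first coordinates is bounded by that of the couples. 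After this change, your verification of $\mathbf A(\beta)$ and your limit passage for part \textbf{A} are essentially the paper's argument.

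\textbf{Fourth-moment bound.} Separately, your check of the fourth-moment bound in Section \ref{sec:CLT} is wrong. For fixed $x$,
$E\bigl|\int_{]s,t]}\int_{E_2}\bar c(u,x)\widetilde M(dr,du)\bigr|^4=(t-s)\int_{E_2}\bar c^4(u,x)\,d\nu_2(u)+3(t-s)^2\bigl(\int_{E_2}\bar c^2(u,x)\,d\nu_2(u)\bigr)^2$.
The independent Brownian part only increases $E|v^i_{s,t}|^4$. So $E|v^i_{s,t}|^4\le M_4(t-s)^2$ fails for small $t-s$ unless $\bar c\equiv0$. Without it, Bonis' bound gives a one-step error of order $N^{-1}$ instead of $(t-s)N^{-1}$. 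Summed over the roughly $2^n(t-s)$ steps in Proposition \ref{prop:Euler}, this error is not uniform in $n$, which breaks the passage to the flow. The paper does not verify this bound in Section \ref{sec:5} either, so the jump part of the perturbation needs a separate argument.
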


The reminder of this section is devoted to the proof of this result. In a
first step we reformulate this example in the framework of Sections \ref%
{sec:4} and \ref{sec:CLT}. In a second step, we verify the hypotheses needed
to obtain Theorem \ref{theo:main}. And in a last step, we check the sewing
property to be able to conclude with item \textbf{B} of Theorem \ref%
{theo:main}.

\subsection{Coming back to the framework of Sections \protect\ref{sec:4} and 
\protect\ref{sec:CLT}}

Notice that the equation (\ref{EX1}) does not define a Markov process, since
it depends on the past trajectory through the expression $\mathbf{S}%
_{s,r}^{N}(\mathbf{X)} .$ So a first step is to replace $\mathbf{X}%
_{s,t}^{N,i}$ by the couple $\mathbb{X}_{s,t}^{N,i}=(\mathbb{X}%
_{s,t}^{N,i,1},\mathbb{X}_{s,t}^{N,i,2})\in {\mathbb{R}}^{2}$ which is
solution of the equation 
\begin{eqnarray}
\mathbb{X}_{s,t}^{N,i,1} &=&\mathbb{X}^{N,i,1}+\int_{s}^{t}\sigma _{0}(%
\mathbb{X}_{s,r}^{N,i, 1},\mathbb{X}_{s,t}^{N,i,2} , \mu _{N}(\mathbb{X}%
_{s,r}^{N, 1},\varphi ))dr  \label{EX1'} \\
&& +\int_{s}^{t}\sigma _{1}(\mathbb{X}_{s,r}^{N,i, 1}, \mathbb{X}%
_{s,r}^{N,i, 2}, \mu _{N}(\mathbb{X}_{s,r}^{N, 1},\varphi ))dB_{r}^{i} 
\notag \\
&&+\int_{]s,t]}\int_{E_{1}}c(u,\mathbb{X}_{s,r}^{N,i, 1},\mu _{N}(\mathbb{X}%
_{s,r}^{N, 1 },\varphi ))N^{i}(dr,du) ,  \notag
\end{eqnarray}
where 
\begin{equation*}
\mu _{N}(\mathbb{X}_{s,t}^{N, 1},\varphi ) := \frac{1}{N}\sum_{j=1}^{N}%
\varphi (\mathbb{X}_{s, t}^{N,j, 1})
\end{equation*}
and 
\begin{eqnarray}  \label{eq:EX1suite}
\mathbb{X}_{s,t}^{N,i,2} &=&\frac{1}{\sqrt{N}}\left(
\sum_{j=1}^{N}\int_{s}^{t}\overline{\sigma }(\mathbb{X}_{s,r}^{N,j,1})d%
\overline{B}_{r}^{j}+\sum_{j=1}^{N}\int_{]s,t]}\int_{E_{2}}\overline{c}(u,%
\mathbb{X}_{s,r}^{N,j,1}))\widetilde{M}^{j}(dr,du)\right) .  \notag
\end{eqnarray}

Under our assumptions, Theorem 1.2 of Graham \cite{carl} implies that (\ref%
{EX1'})--(\ref{eq:EX1suite}) admits a unique strong non-explosive adapted
solution which is Markov. As a consequence, existence and uniqueness follows
also for the solution of (\ref{EX1}).

The solution of the limit equation will be the couple $\widehat{\mathbb{X}}%
_{s,t}^{i}=(\widehat{\mathbb{X}}_{s,t}^{i,1},\widehat{\mathbb{X}}%
_{s,t}^{i,2}),$ $0 \le s \le t < \infty, $ which verifies 
\begin{eqnarray*}
\widehat{\mathbb{X}}_{s,t}^{N,i,1} &=&\widehat{\mathbb{X}}%
^{N,i,1}+\int_{s}^{t}\sigma _{0}(\widehat{\mathbb{X}}_{s,r}^{N,i, 1 }, 
\widehat{\mathbb{X}}_{s,r}^{N,i, 2 }, \bar{\mu}^1_{s,r}(\varphi ))dr \\
&&+ \int_{s}^{t}\sigma _{1}(\widehat{\mathbb{X}}_{s,r}^{N,i, 1}, \widehat{%
\mathbb{X}}_{s,r}^{N,i, 2 }, \bar{\mu}^1_{s,r}(\varphi ))dB_{r}^{i} \\
&&+\int_{]s,t]}\int_{E_{1}}c(u,\widehat{\mathbb{X}}_{s,r}^{N,i, 1},\widehat{%
\mathbb{X}}_{s,r}^{N,i, 2} \bar{\mu}_{s,r}^1(\varphi ))N^{i}(dr,du), \\
\widehat{\mathbb{X}}_{s,t}^{N,i,2}&=& \widehat{\mathbb{X}}%
^{N,i,2}+\int_{s}^{t}\sqrt{E(f(\widehat{\mathbb{X}}_{s,r}^{N,i,1})\mid 
\mathcal{F}_{r}^{W})}dW_{r} ,
\end{eqnarray*}%
with $\bar{\mu}^1_{s,r}(\varphi )=E(\varphi (\widehat{\mathbb{X}}%
_{s,r}^{N,i,1})\mid \mathcal{F}_{r}^{W}).$

Notice that in (\ref{EX1'})--(\ref{eq:EX1suite}) we took $\widehat{\mathbb{X}%
}^{N,i,2} = 0,$ but other choices are of course possible.

With this construction, we have $\mathbf{X}_{s,t}^{N}=\mathbb{X}%
_{s,t}^{N,1}, $ $\mathbf{S}_{s,t}^{N}=\mathbb{X}_{s,t}^{N,2},$ $\widehat{%
\mathbf{X}}_{s, t}^{N}=\widehat{\mathbb{X}}_{s,t}^{N,1}$ and $\widehat{%
\mathbf{S}}_{s,t}^{N}=\mathbb{X}_{s,t}^{N,2}.$ As a consequence, in order to
prove our result, it will be sufficient to prove 
\begin{equation}
\sup_{s \le t\leq T}\mathcal{W}_{2}(\mathbf{X}_{s,t}^{N}(X),\widehat{\mathbf{%
X}}_{s,t}(X))\leq \sup_{s \le t\leq T}\mathcal{W}_{2}(\mathbb{X}_{s,t}^{N},%
\widehat{\mathbb{X}}_{s,t}^{N})\leq \frac{C_{T}}{\sqrt{N}} .  \label{EX3}
\end{equation}

In a second step we have to define the Euler schemes which approximate $%
\mathbb{X}_{s,t}^{N}$ respectively $\widehat{\mathbb{X}}_{s,t}^{N}.$
According to (\ref{pf7}) and (\ref{pf9}), the one step Euler schemes which
approximate $\mathbb{X}_{s,t}^{N}$ respectively $\widehat{\mathbb{X}}%
_{s,t}^{N}$ are given as follows. For every $i=1,...,N,$%
\begin{equation*}
\mathcal{X}_{s,t}^{N,i,1}(X)=X^{i,1}+h_{s,t}^{i, 1}(X); \quad \mathcal{X}%
_{s,t}^{N,i,2}(X)=X^{i,2}+S_{s,t}^{N}(X)
\end{equation*}%
and 
\begin{equation*}
\widehat{\mathcal{X}}_{s,t}^{N,i, 1}(X)=X^{i,1}+\widehat{h}%
_{s,t}^{i,1}(X^{i});\quad \widehat{\mathcal{X}}_{s,t}^{N,i,2}(X)=X^{i,2}+%
\widehat{S}_{s,t}(X^{i}),
\end{equation*}%
with $h_{s,t}^{i, 1},\widehat{h}_{s,t}^{i, 1}$ and $S_{s,t}^{N}$ and $%
\widehat{S}_{s,t}^{N}$ to be defined in the following section.

\subsection{The function $H$}

In this example, $H_{s,t}^{i}:{\mathbb{R}}^{2}\times {\mathbb{R}}%
^{m}\rightarrow {\mathbb{R}}^{2}$ is given by 
\begin{eqnarray}
H_{s,t}^{i,1}(x,a) &=&\sigma _{0}(x^1, x^2 ,a)(t-s)+\sigma _{1}(x^1, x^2,
a)(B_{t}^{i}-B_{s}^{i}) \\
&& +\int_{]s,t]}\int_{E_{1}}c(u,x^1, x^2 ,a)N^{i}(dr,du),  \notag \\
H_{s,t}^{i,2}(x,a) &=&0 .  \label{EX4}
\end{eqnarray}%
Clearly, 
\begin{equation*}
E_{s}(H_{s,t}^{i,1}(x,a))=\sigma _{0}(x^1, x^2,a)(t-s)+\int_{E_{1}}c(u,x^1,
x^2 ,a)d\nu _{1}(u)(t-s),
\end{equation*}%
such that hypothesis $(H,i)$ is satisfied. Moreover, due to (\ref{EX2a}) and
(\ref{EX2b}), writing $x = (x^1, x^2), y = (y^1, y^2 ),$ 
\begin{equation*}
\left\vert E_{s}(H_{s,t}^{i}(x,a))-E_{s}(H_{s,t}^{i}(y,b))\right\vert \leq
(t-s)(L_{\sigma }+\int_{E_{1}}\overline{c}_{1}(u)d\nu _{1}(u))(\left\vert
x-y\right\vert +\left\vert a-b\right\vert ),
\end{equation*}%
implying (\ref{pf1}).

Concerning the martingale part, we have that 
\begin{equation*}
\widetilde{E}_{s}(H_{s,t}^{i,1}(X^{i},a))=\sigma
_{1}(X^{i},a)(B_{t}^{i}-B_{s}^{i})+\int_{]s,t]}\int_{E_{1}}c(u,X^{i},a)%
\widetilde{N}^{i}(dr,du).
\end{equation*}%
Using independence and the isometry property, we get 
\begin{multline*}
E\left\vert \widetilde{E}_{s}(H_{s,t}^{i}(X^{i},a))-\widetilde{E}%
_{s}(H_{s,t}^{i}(Y^{i},b))\right\vert ^{2} \\
=(t-s)\left( \left\vert \sigma _{1}(X^{i},a)-\sigma _{1}(Y^{i},b)\right\vert
^{2}+\int_{E_{1}}\left\vert c(u,X^{i},a)-c(u,Y^{i},b)\right\vert ^{2}d\nu
_{1}(u)\right) \\
\leq (t-s)(L_{\sigma_1^{2}}+\int_{E_{1}}\overline{c}_{1}^{2}(u)d\nu
_{1}(u))(\left\vert x-y\right\vert ^{2}+\left\vert a-b\right\vert ^{2}),
\end{multline*}%
implying that also (\ref{pf1'}) holds.

\subsection{The perturbation term $S^N_{s, t}$}

Here we have 
\begin{equation*}
v_{s,t}^{i}(x^{i},\omega )=\overline{\sigma }(x^{i,1})(\overline{B}_{t}^{i}-%
\overline{B}_{s}^{i})+\int_{]s,t]}\int_{E_{2}}\overline{c}(u,x^{i, 1})%
\widetilde{M}^{i}(dr,du).
\end{equation*}%
We define $S_{s,t}^{N}(x,f)=\frac{1}{\sqrt{N}}%
\sum_{i=1}^{N}v_{s,t}^{i}(x^{i},\omega ).$  $\overline{%
\sigma }$ and $\overline{c}^{i}$\ verify (\ref{EX2a}) and (\ref{EX2bb}) and $%
E\left\vert v_{s,t}^{i}(x^{i},\omega )\right\vert ^{2}=f(x^{i,1})(t-s),$
where $f$ is defined in (\ref{EX5}). Our assumptions are stated such that
condition $\mathbf{A(f)}$ is satisfied.

As a consequence, item \textbf{A} of Theorem \ref{theo:main} follows.

\subsection{Checking the sewing property}

Using the same notations as in Section \ref{sec:4}, we finally introduce 
\begin{equation*}
\widehat{S}_{s,t}(X^{i})=\sqrt{E(f(X^{i, 1 })|\mathcal{F}_{s}^{W})}%
(W_{t}-W_{s}),
\end{equation*}%
and we have, for each fixed $i$ 
\begin{equation*}
\widehat{\mathcal{X}}_{s,t}^{i,1}(X)=X^{i,1}+\widehat{h}_{s,t}^{i,
1}(X^{i}), \quad \widehat{\mathcal{X}}_{s,t}^{i,2}(X)=X^{i, 2}+\widehat{S}%
_{s,t}^{N}(X^{i}).
\end{equation*}%
Our aim now is to prove that, for a fixed $i,$ $\widehat{\mathcal{X}}%
_{s,t}^{i}(X)$ verifies the sewing property. Notice that $\widehat{\mathcal{X%
}}_{s,t}^{i}(X)$ does only depend on $X^{i},$ and therefore, in what
follows, we drop the index $i$ and simply write $X$ instead of $X^{i}$ which
now stands for a fixed particle taking values in ${\mathbb{R}}^2$ (and not
for the whole particle system).

We introduce the equation which is associated to the one step Euler scheme. 
For this purpose it is convenient to fix a c\`{a}dl\`{a}g function $\eta
:(s,\infty )\rightarrow (s,\infty )$ which is non-decreasing, takes only a
finite number of values and satisfies $s\leq \eta (t)\leq t$ for every $%
t\geq s.$ To any such function $\eta ,$ we associate the solution $(\mathcal{%
X}_{s,t}^{\eta }(X))_{t\geq s}$ of the discretized equation 
\begin{multline}
\mathcal{X}_{s,t}^{\eta ,1}(X)=X^{1}+\int_{s}^{t}\sigma _{0}(\mathcal{X}%
_{s,\eta (r)}^{\eta }(X),E(\varphi (\mathcal{X}_{s,\eta (r)}^{\eta
,1}(X))\mid \mathcal{F}_{r}^{W}))dr  \label{eq:Xeta} \\
+\int_{s}^{t}\sigma _{1}(\mathcal{X}_{s,\eta (r)}^{\eta }(X),E(\varphi (%
\mathcal{X}_{s,\eta (r)}^{\eta ,1}(X))\mid \mathcal{F}_{r}^{W}))dB_{r} \\
+\int_{]s,t]}\int_{E_{1}}c(u,\mathcal{X}_{s,\eta (r)}^{\eta }(X),E(\varphi (%
\mathcal{X}_{s,\eta (r)}^{\eta ,1}(X))\mid \mathcal{F}_{r}^{W}))N(dr,du) ,
\end{multline}
together with 
\begin{equation*}
\mathcal{X}_{s,t}^{\eta ,2}(X)=X^{2}+\int_{s}^{t}\sqrt{E(f(\mathcal{X}%
_{s,\eta (r)}^{\eta ,1}(X))\mid \mathcal{F}_{r}^{W})}dW_{r} ,
\end{equation*}
where $X\in L_{s}^{2}.$ Since $\eta $ takes only a finite number of values,
the above equation has a unique solution which can be constructed explicitly
step by step. We also denote by $\mathcal{X}_{s,t}^{0}(X)$ the solution of
the above equation for the choice of the constant function $\eta (t)=s,$ for
every $t\geq s.$ In this case%
\begin{equation*}
E(\varphi (\mathcal{X}_{s,\eta (r)}^{\eta ,1}(X))\mid \mathcal{F}%
_{r}^{W}))=E(\varphi (X^{1, 1})\mid \mathcal{F}_{s}^{W}),
\end{equation*}%
such that the corresponding equation reads as 
\begin{eqnarray*}
\mathcal{X}_{s,t}^{0,1}(X) &=&X^{1}+\int_{s}^{t}\sigma _{0}(X,E(\varphi
(X^{1})\mid \mathcal{F}_{s}^{W}))dr+\int_{s}^{t}\sigma _{1}(X,E(\varphi
(X^{1})\mid \mathcal{F}_{s}^{W}))dB_{r} \\
&+&\int_{]s,t]}\int_{E_{1}}c(u,X,E(\varphi (X^{1})\mid \mathcal{F}%
_{s}^{W}))N(dr,du) , \\
\mathcal{X}_{s,t}^{0,2}(X) &=&X^{2}+\int_{s}^{t}\sqrt{E(f(X^{1}))\mid 
\mathcal{F}_{s}^{W})}dW_{r},
\end{eqnarray*}%
which is the one step Euler scheme associated to the equation, that is, $%
\mathcal{X}_{s,t}^{0}=\widehat{\mathcal{X}}_{s,t}.$

\begin{lemma}
Grant Assumptions (\ref{EX2a})--(\ref{EX5}) and suppose that $X \in L^2 _s.$
Then we have 
\begin{eqnarray}
E(\left\vert E_{s}((\mathcal{X}_{s,t}^{\eta }(X))-\mathcal{X}%
_{s,t}^{0}(X))\right\vert ^{2}) &\leq &C(1+E(\left\vert X\right\vert
^{2}))(t-s)^{3} ,  \label{EX6} \\
E(\left\vert \widetilde{E}_s(\mathcal{X}_{s,t}^{\eta }(X)-\mathcal{X}%
_{s,t}^{0}(X))\right\vert ^{2}) &\leq &C(1+E(\left\vert X\right\vert
^{2}))(t-s)^{2} ,  \label{EX7}
\end{eqnarray}%
and 
\begin{equation}
E(\left\vert \mathcal{X}_{s,t}^{0}(X)-\mathcal{X}_{s,t}^{0}(Y)\right\vert
^{2})\leq CE(\left\vert X-Y\right\vert ^{2})(t-s) .  \label{EX8}
\end{equation}
\end{lemma}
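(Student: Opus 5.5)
We write $\mathcal{X}^{\eta}_{s,t}(X)-\mathcal{X}^0_{s,t}(X)$ as a sum of stochastic integrals whose integrands are differences of coefficients. These differences are evaluated at $(\mathcal{X}^\eta_{s,\eta(r)},E(\varphi(\mathcal{X}^{\eta,1}_{s,\eta(r)})\mid\mathcal{F}^W_r))$ on one side and at $(X,E(\varphi(X^1)\mid\mathcal{F}^W_s))$ on the other. The key preliminary estimate is a short-time bound:
\[
E|\mathcal{X}^\eta_{s,r}(X)-X|^2\le C(1+E|X|^2)(r-s),\qquad s\le r\le t,\ t-s\le1 .
\]
It holds uniformly in $\eta$. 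We obtain it directly from the equation, because all coefficients are bounded: the $dr$ term contributes $C(r-s)^2$, and the $dB$ and $dW$ terms contribute $C(r-s)$ by Itô's isometry. For the $N(dr,du)$ term we split into compensated part plus compensator; using (\ref{EX2c}) together with the growth of $c$ implied by (\ref{EX2b}) and boundedness, this also gives $C(r-s)$. Since $s\le\eta(r)\le r$, the same bound applies at $\eta(r)$. The measure arguments are handled by Lipschitz continuity of $\varphi$ and conditional Jensen:
\[
E|E(\varphi(\mathcal{X}^{\eta,1}_{s,\eta(r)})\mid\mathcal{F}^W_r)-E(\varphi(X^1)\mid\mathcal{F}^W_r)|^2\le L_\varphi^2 E|\mathcal{X}^{\eta,1}_{s,\eta(r)}-X^1|^2 .
\]
Here one uses that $X^1\in L^2_s$ and that the increments of $W$ after $s$ are independent of $\mathcal{F}_s$, so that $E(\varphi(X^1)\mid\mathcal{F}^W_r)=E(\varphi(X^1)\mid\mathcal{F}^W_s)$.

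For (\ref{EX6}), the martingale parts ($dB$, $dW$ and $\widetilde N$) vanish under $E_s$. What remains is $E_s\int_s^t(\Delta\sigma_0(r)+\int_{E_1}\Delta c(u,r)\nu_1(du))dr$. By Jensen, its squared norm is bounded by
\[
(t-s)\int_s^t E\big(|\Delta\sigma_0(r)|^2+|\textstyle\int\Delta c\,d\nu_1|^2\big)dr .
\]
Each difference is controlled by (\ref{EX2a})--(\ref{EX2b}) and (\ref{EX2c}) by $C(E|\mathcal{X}^\eta_{s,\eta(r)}-X|^2+\text{measure term})\le C(1+E|X|^2)(r-s)$. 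Integrating gives $(t-s)\cdot(t-s)^2$, that is $(t-s)^3$.

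For (\ref{EX7}), $\widetilde E_s$ of the difference is the martingale part plus the centered finite-variation part, and the latter is already $O((t-s)^3)$. For the martingale part, Itô's isometry applied to the $dB$, $\widetilde N$ and $dW$ integrands gives $\int_s^t E|\Delta|^2dr\le C(1+E|X|^2)(t-s)^2$. For the $dW$ term we need Lipschitz continuity of $a\mapsto\sqrt a$ on $[f_*,f^*]$, which holds with constant $1/(2\sqrt{f_*})$, together with Lipschitz continuity of $f$.

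For (\ref{EX8}), $\mathcal{X}^0$ has frozen coefficients. Its difference is $X-Y$ plus (coefficient difference)$\times$(increments), and the coefficient differences are Lipschitz in $(X,E(\varphi(X^1)\mid\mathcal{F}^W_s))$. Conditional Jensen bounds the measure term by $L_\varphi^2E|X-Y|^2$. Isometry and the bound $(t-s)^2\le(t-s)$ then give $E|X-Y|^2(1+C(t-s))$.

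We note that the statement as written lacks the $E|X-Y|^2$ term; we would read the intended statement as bounding the increment difference $\mathcal{E}^0_{s,t}(X)-\mathcal{E}^0_{s,t}(Y)$ by $C(t-s)E|X-Y|^2$, which is exactly what the computation above yields.

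The main obstacle is the conditional-expectation argument together with measurability. We must justify that $E(\varphi(\cdot)\mid\mathcal{F}^W_r)$ evaluated at the frozen time behaves as claimed, and that the integrands are predictable so that the isometries apply. We also need the jump term's first-moment bound via $\int\bar c_1\,d\nu_1<\infty$.
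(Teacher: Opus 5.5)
Your proposal is correct and is exactly the standard It\^o-isometry and conditional-Jensen computation that the paper omits (``The proof is standard and we skip it''). Two small points on the steps you flagged: the predictability concern is harmless, since for $r\in[t_i,t_{i+1})$ the independence of $W_u-W_{t_i}$, $u\ge t_i$, from $\mathcal{F}_{t_i}$ gives $E(\varphi(\mathcal{X}^{\eta,1}_{s,t_i})\mid\mathcal{F}^W_r)=E(\varphi(\mathcal{X}^{\eta,1}_{s,t_i})\mid\mathcal{F}^W_{t_i})$, so all integrands are simple predictable processes; and the jump term tacitly needs $\int_{E_1}(|c(u,0,0,0)|+|c(u,0,0,0)|^2)\nu_1(du)<\infty$, which the paper's ``bounded coefficients'' presupposes but (\ref{EX2a})--(\ref{EX2c}) do not literally contain.

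You are also right that (\ref{EX8}) as printed fails at $t=s$. Your reading $E|(\mathcal{X}^0_{s,t}(X)-X)-(\mathcal{X}^0_{s,t}(Y)-Y)|^2\le C(t-s)E|X-Y|^2$, with the $E_s$-part of order $(t-s)^2E|X-Y|^2$, is exactly what the Lipschitz property $(\mathbf{L})$ needs.
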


The proof is standard and we skip it.

We now fix $s<r<t$ and define 
\begin{eqnarray*}
\eta (u) &=&r\qquad \mbox{if } r\leq u, \\
&=&s\qquad \mbox{if } s\leq u < r.
\end{eqnarray*}%
We see that 
\begin{equation*}
\mathcal{X}_{s,t}^{\eta }=\widehat{\mathcal{X}}_{r,t}(\widehat{\mathcal{X}}%
_{s,r}(X)),
\end{equation*}%
and, recalling that $\widehat{\mathcal{X}}_{s,t}=\mathcal{X}_{s,t}^{0},$
also that 
\begin{eqnarray*}
E\left\vert E_{s}(\widehat{\mathcal{X}}_{s,t}(X)-\widehat{\mathcal{X}}_{r,t}(%
\widehat{\mathcal{X}}_{s,r}(X)))\right\vert ^{2} &=&E\left\vert E_{s}(%
\widehat{\mathcal{X}}_{s,t}(X)-\mathcal{X}_{s,t}^{\eta }(X))\right\vert
^{2}\leq C(t-s)^{3} , \\
E\left\vert \widetilde{E}_{s}(\widehat{\mathcal{X}}_{s,t}(X)-\widehat{%
\mathcal{X}}_{r,t}(\widehat{\mathcal{X}}_{s,r}(X))) \right\vert ^{2}
&=&E\left\vert \widetilde{E}_{s}(\widehat{\mathcal{X}}_{s,t}(X)-\mathcal{X}%
_{s,t}^{\eta }(X))\right\vert ^{2}\leq C(t-s)^{2},
\end{eqnarray*}
which shows that the sewing property holds with $\beta =\frac{1}{2}.$

It is also standard to prove that%
\begin{eqnarray*}
E\left\vert E_{s}(\widehat{\mathcal{X}}_{s,t}(X)-\widehat{\mathcal{X}}%
_{s,t}(Y))\right\vert ^{2} &\leq &C(t-s)^{2}E\left\vert X-Y\right\vert ^{2},
\\
E\left\vert \widetilde{E}_{s}(\widehat{\mathcal{X}}_{s,t}(X)-\widehat{%
\mathcal{X}}_{s,t}(Y))\right\vert ^{2} &\leq &C(t-s)E\left\vert
X-Y\right\vert^2,
\end{eqnarray*}%
such that the Lipschitz property is also satisfied.

Summarizing the above discussion, we obtain the following

\begin{proposition}
Grant Assumptions (\ref{EX2a})--(\ref{EX5}). Then $\widehat{\mathcal{X}}%
_{s,t}$ verifies the assumption $\mathbf{A(}\beta )$ of (\ref{Abeta}) with $%
\beta =1/2,$ and its associated flow solution $\widehat{\mathbb{X}}$ is the
unique solution of (\ref{EX3}).
\end{proposition}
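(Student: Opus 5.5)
The plan is to verify the three ingredients of $\mathbf{A}(\beta)$ from (\ref{Abeta}) for $\widehat{\mathcal{X}}_{s,t}$, namely $\mathbf{(G)}$, $\mathbf{(L)}$ and $\mathbf{S}(1/2)$. Lemma \ref{sewing} then provides a unique flow solution $\widehat{\mathbb{X}}$. It remains to identify this flow with the solution of the limit conditional McKean--Vlasov equation.

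For $\mathbf{(G)}$ I will decompose $\widehat{\mathcal{X}}_{s,t}(X)-X$. The conditional mean $E_s$ of this increment is $(t-s)\big(\sigma_0+\int c\,d\nu_1\big)$, evaluated at $X$ and at $E(\varphi(X^1)\mid\mathcal{F}_s^W)$. Its martingale part is $\sigma_1 (B_t-B_s)+\int c\,d\widetilde N+\sqrt{E(f(X^1)\mid\mathcal{F}_s^W)}\,(W_t-W_s)$. The coefficients are bounded, and $\int (\bar c_1+\bar c_1^2)\,d\nu_1<\infty$ together with the Lipschitz bound (\ref{EX2b}) bounds $c$ in $L^1(\nu_1)\cap L^2(\nu_1)$ up to linear growth. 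With $f\le f^*$, this gives $\|\widehat{\mathcal{E}}\|_2\le C(t-s)(1+\|X\|_2)$ and $\|\widetilde{\mathcal{E}}\|_2\le C(t-s)^{1/2}$.

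$\mathbf{(L)}$ is the pair of standard estimates stated just before the proposition. To prove them I will use (\ref{EX2a})--(\ref{EX2bb}), the contraction $E|E(\varphi(X^1)-\varphi(Y^1)\mid\mathcal{F}^W_s)|^2\le L_\varphi^2E|X-Y|^2$, and, for the $W$-term, the Farforovskaya argument of Proposition \ref{prop:53}. That proposition gives $|\sqrt a-\sqrt b|\le |a-b|/\sqrt{f_*}$ when $a,b\ge f_*$, and the Lipschitz continuity of $f$ then controls $E|\sqrt{E(f(X^1)\mid\mathcal{F}^W_s)}-\sqrt{E(f(Y^1)\mid\mathcal{F}^W_s)}|^2$. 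The Itô and Poisson isometries then give the factor $(t-s)$ for the martingale part and $(t-s)^2$ for the finite-variation part.

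For $\mathbf{S}(1/2)$ I will use the identity $\mathcal{X}^{\eta}_{s,t}=\widehat{\mathcal{X}}_{r,t}\circ\widehat{\mathcal{X}}_{s,r}$ with the two-step $\eta$, combined with (\ref{EX6})--(\ref{EX7}). The difference $\widehat{\mathcal{E}}_{s,t}-\widehat{\mathcal{E}}_{s,r,t}$ equals $E_s(\mathcal{X}^0_{s,t}-\mathcal{X}^\eta_{s,t})$, so it is bounded by $C(1+E|X|^2)(t-s)^{3}=C(1+E|X|^2)(t-s)^{2+1}$. Similarly $\widetilde{\mathcal{E}}_{s,t}-\widetilde{\mathcal{E}}_{s,r,t}$ is bounded by $(t-s)^{2}=(t-s)^{1+1}$. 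These are stronger than needed, since $\beta=1/2$ suffices because $t-s\le 1$.

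The substantive step is the lemma giving (\ref{EX6})--(\ref{EX7}), which the paper calls standard. I would prove it by writing the difference of the integrands between $r$ and $t$. This involves $X$ versus $\widehat{\mathcal{X}}_{s,r}(X)$, whose $L^2$ distance is $O((t-s)^{1/2})$ by $\mathbf{(G)}$. It also involves conditional expectations with respect to $\mathcal{F}^W_s$ versus $\mathcal{F}^W_r$. The delicate point is the finite-variation part, where the order $(t-s)^3$ requires that the $O((t-s)^{1/2})$ fluctuations average out under $E_s$. I would Taylor expand the coefficients to first order, or accept only the weaker order $(t-s)^{2+\beta}$ with $\beta=1/2$ that the Lipschitz bound directly gives. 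The latter suffices: $(t-s)\cdot(t-s)^{1/2}$ squared is $(t-s)^{3}$, because the drift difference is integrated over a length-$(t-s)$ interval with a pointwise $L^2$ error $O((t-s)^{1/2})$. So plain Lipschitz bounds already give (\ref{EX6}), and they give (\ref{EX7}) via isometry, $(t-s)\cdot(t-s)=(t-s)^2$.

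Finally, to identify $\widehat{\mathbb{X}}$ as the solution of (\ref{EX3}), I will pass to the limit in the Euler equation (\ref{eq:Xeta}) with $\eta$ the dyadic grid, using (\ref{sl3}). The integrands converge in $L^2$ by the Lipschitz bounds. The conditional expectations with respect to $\mathcal{F}^W_{\eta(r)}$ converge to those with respect to $\mathcal{F}^W_r$, using that the future increments of $W$ are independent of the other noises. Uniqueness follows from the uniqueness part of Lemma \ref{sewing} together with trajectorial uniqueness.
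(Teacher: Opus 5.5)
Your proposal is correct and follows essentially the paper's route. You verify $\mathbf{(G)}$ and $\mathbf{(L)}$ by Lipschitz and isometry estimates, obtain $\mathbf{S}(\beta)$ from the identity $\mathcal{X}^\eta_{s,t}=\widehat{\mathcal{X}}_{r,t}\circ\widehat{\mathcal{X}}_{s,r}$ together with (\ref{EX6})--(\ref{EX7}), and identify the flow by passing to the limit in (\ref{eq:Xeta}) along the dyadic grid. Your sketch of (\ref{EX6})--(\ref{EX7}), which the paper skips, is fine once you note that $E(\varphi(X^1)\mid\mathcal{F}^W_r)=E(\varphi(X^1)\mid\mathcal{F}^W_s)$ for $\mathcal{F}_s$-measurable $X$, because the $W$-increments on $[s,r]$ are independent of $\mathcal{F}_s$; plain Lipschitz bounds then already give $(t-s)^{3}$ and $(t-s)^{2}$, so your hesitation about needing fluctuations to average out or a Taylor expansion is unnecessary.
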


\begin{proof}
Recall the definition of the dyadic partition $\pi_n ( s, t) $ in (\ref%
{eq:Xn}) and that $t_k = k 2 ^{ - n }$ for any $k.$ So we have 
\begin{equation*}
\widehat{X}_{s,t}^{n}=\widehat{\mathcal{X}}_{t_{k (t) },t} \circ \widehat{%
\mathcal{X}}_{t_{k(t) - 1 } , t_{k(t)} } \circ ...\circ \widehat{\mathcal{X}}%
_{s ,t_{k(s)}+1} .
\end{equation*}%
As in (\ref{pf13bis}), the above scheme determines the values of $\widehat{X}%
^n_{s , t_k} $ for all dyadic points $t_k \in ( s, t) . $

We now consider the function $\eta $ which is defined on $[s,t]$ by $\eta
(r)=t_{i}$ for all $r$ with $t_{i}\leq r<t_{i+1},$ and $\eta (r)=t$ for all $%
r\geq t.$ Then $\widehat{X}_{s,t_{k}}^{n}=\mathcal{X}_{s,t_{k}}^{\eta }(X)$
for all $k$ such that $t_{k}\in (s,t),$ and moreover $\widehat{X}_{s,t}^{n}=%
\mathcal{X}_{s,t}^{\eta }(X).$ In other words, $(\mathcal{X}_{s,r}^{\eta
}(X))_{r\geq s}$ is a continuous interpolation of $\widehat{X}_{s,t_{k}}^{n}$
in between the dyadic points. We rewrite equation (\ref{eq:Xeta}) as 
\begin{multline}
\mathcal{X}_{s,t}^{\eta ,1}(X)=X^1+\int_{s}^{t}\sigma _{0}(\mathcal{X}%
_{s,r}^{\eta }(X),E(\varphi (\mathcal{X}_{s,r}^{\eta ,1}(X))\mid \mathcal{F}%
_{r}^{W}))dr  \label{eq:Xetabis} \\
+\int_{s}^{t}\sigma _{1}(\mathcal{X}_{s,r}^{\eta }(X),E(\varphi (\mathcal{X}%
_{s,r}^{\eta ,1}(X))\mid \mathcal{F}_{r}^{W}))dB_{r} \\
+\int_{]s,t]}\int_{E_{1}}c(u,\mathcal{X}_{s,r}^{\eta }(X),E(\varphi (%
\mathcal{X}_{s,r}^{\eta ,1}(X))\mid \mathcal{F}%
_{r}^{W}))N(dr,du)+I_{s,t}^{0}+I_{s,t}^{1}+J_{s,t}
\end{multline}
together with 
\begin{equation*}
\mathcal{X}_{s,t}^{\eta ,2}(X)=X^{2}+\int_{s}^{t}\sqrt{E(f(\mathcal{X}%
_{s,r}^{\eta ,1}(X))\mid \mathcal{F}_{r}^{W})}dW_{r}+I_{s,t}^{W},
\end{equation*}
where 
\begin{eqnarray*}
I_{s,t}^{0} &=&\int_{s}^{t}\sigma _{0}(\mathcal{X}_{s,\eta (r)}^{\eta
},E(\varphi (\mathcal{X}_{s,\eta (r)}^{\eta ,1}(X))\mid \mathcal{F}%
_{r}^{W}))-\sigma _{0}(\mathcal{X}_{s,r}^{\eta },E(\varphi (\mathcal{X}%
_{s,r}^{\eta ,1}(X))\mid \mathcal{F}_{r}^{W}))dr, \\
I_{s,t}^{1} &=&\int_{s}^{t}\sigma _{1}(\mathcal{X}_{s,\eta (r)}^{\eta
,1},E(\varphi (\mathcal{X}_{s,\eta (r)}^{\eta }(X))\mid \mathcal{F}%
_{r}^{W}))-\sigma _{1}(\mathcal{X}_{s,r}^{\eta },E(\varphi (\mathcal{X}%
_{s,r}^{\eta ,1}(X))\mid \mathcal{F}_{r}^{W}))dB_{r}, \\
J_{s,t} &=&\int_{]s,t]}\int_{E_{1}}c(u,\mathcal{X}_{s,\eta (r)}^{\eta
},E(\varphi (\mathcal{X}_{s,\eta (r)}^{\eta ,1}(X))\mid \mathcal{F}_{r}^{W}))
\\
&&\quad \quad \quad \quad \quad \quad \quad \quad \quad \quad \quad \quad
-c(u,\mathcal{X}_{s,r}^{\eta },E(\varphi (\mathcal{X}_{s,r}^{\eta
,1}(X))\mid \mathcal{F}_{r}^{W}))N(dr,du), \\
I_{s,t}^{W} &=&\int_{s}^{t}\sqrt{E(f(\mathcal{X}_{s,\eta (r)}^{\eta ,1})\mid 
\mathcal{F}_{\tau (r)}^{W})}-\sqrt{E(f(\mathcal{X}_{s,r}^{\eta ,1})\mid 
\mathcal{F}_{r}^{W})}dW_{r}.
\end{eqnarray*}%
Standard computations give 
\begin{equation*}
E\left\vert I_{s,t}^{0}\right\vert ^{2}+E\left\vert I_{s,t}^{1}\right\vert
^{2}+E\left\vert J_{s,t}\right\vert ^{2}+E\left\vert I_{s,t}^{W}\right\vert
^{2}\rightarrow 0,
\end{equation*}%
as $n\rightarrow \infty .$ Since $\lim_{n\rightarrow \infty }E\left\vert 
\widehat{X}_{s,t}^{n}-\widehat{\mathbb{X}}_{s,t}\right\vert ^{2}=0,$ passing
to the limit in equation (\ref{eq:Xeta}), we obtain (\ref{EX3}). The proof
of uniqueness is standard.
\end{proof}

We close this section with the

\begin{proof}[Proof of Theorem \protect\ref{Maine2}]
It is standard to show that the one step Euler scheme $\mathcal{X}_{s,t}^{N}$
associated to the particle system (\ref{EX1'}) satisfies the sewing property
and that the flow solution associated to $\mathcal{X}_{s,t}^{N}$ is indeed
given by the unique solution of system (\ref{EX1'}). The claim of Theorem %
\ref{Maine2} follows then from Theorem \ref{theo:main}.
\end{proof}

\bigskip

\section{Multi layer particle systems}

In this section we present an example which is a variant of the previous
model structured within several layers. To ease the reading we stick to the
one-dimensional case.

As before, we fix $s < t $ and we consider the dyadic partition $\pi_n ( s,
t) = \{ s = t_0 < t_1 < \ldots < t_l = t\}.$ We introduce some notation. Let 
$r_{0} \geq 1 $ be fixed. To encode the different particles belonging to
different layers, we make use of multi-indices $(l, i), $ where $1 \le l \le
r_0 + 1 $ denotes the layer and $1 \le i \le N $ the number of the particle
within a given layer. Each layer is constituted of $N$ particles, and we
consider $r_0 + 1 $ layers.

Our construction is as follows. For each $1 \le r \le r_0 $ and each $\delta
>0,$ we introduce random functions $v_{\delta }^{ (r, i ) }:\Omega \times {%
\mathbb{R}} \rightarrow {\mathbb{R}}$ such that the following properties
hold.

\begin{enumerate}
\item For all $1 \le r \le r_0 $ and for all $x$ we have that $v_\delta^{r,
i } ( \omega, x ) , 1 \le i \le N , $ are all independent and centered.

\item For all $r^{\prime }< r, $ and for all $x, x^{\prime }, $ the $%
v_\delta^{ r^{\prime }, i^{\prime }}( \omega, x) $ and $v_\delta^{r, i }
(\omega, x^{\prime }) $ are all independent, for any choice of $i, i^{\prime
}.$

\item We have that 
\begin{equation*}
E(v_{\delta }^{r, i } (\omega ,x)^{2})=f_{r }(x)\delta , 
\end{equation*}
for some given functions $f_{r}: {\mathbb{R}} \to {\mathbb{R}}_+
,r=1,...,r_{0},$ which are lower-bounded, bounded and Lipschitz continuous.
\end{enumerate}

Let us introduce some further notation. To the last layer, having number $%
r_0 + 1, $ we associate outcome variables $X_{t_k}^{ i } ,1 \le i \le N , k
= 0, \ldots, l. $ Moreover, to each intermediate layer $1, \ldots, r_0,$ we
associate auxiliary processes $Y_{t_k}^{ r, i^{\prime }}, k=0, \ldots, l, $
for $r \le r_0 $ and $1 \le i^{\prime }\le N ,$ representing intermediate
results.

We initialize our scheme by taking $N$ i.i.d. copies per layer,
independently for any layer. In other words, for any $1 \le r \le r_0, $ we
take $Y_{t_0}^{ r, i }, 1 \le i \le N, $ i.i.d., and for layer $r_0 + 1, $
we take $X_{t_0}^{ i }, 1 \le i \le N, $ i.i.d. as well. Variables
associated to different layers are independent.

The scheme then proceeds recursively as follows. Suppose that all variables $%
X_{t_k}^{ i }, 1 \le i \le N , $ and $Y_{t_k}^{r, i }, r \in
\{1,...,r_{0}\}, 1 \le i \le N $\ are given. Then we construct the variables 
$Y_{t_{k+1}}^{r, i }, r =r_0, \ldots, 1 $ for all $1 \le i \le N, $ and
finally $X_{t_{k+1}}^{ i }, 1 \le i \le N,$  in the following way.

\bigskip \noindent \textbf{Step 1.} For any $1 \le i \le N, $ we put 
\begin{equation*}
Y_{t_{k+1}}^{r_0, i }=Y_{t_k}^{r_0, i }+\frac{1}{\sqrt{N}}%
\sum_{j=1}^{N}v_{\gamma_k} ^{r_0 ,j}(\omega ,X_{t_k}^{ j}).
\end{equation*}

\noindent\textbf{Step 2.} We define recursively for any $r = r_0- 1, \ldots,
1$ and for any $1 \le i \le N, $ 
\begin{equation*}
Y_{t_{k+1}}^{r, i }=Y_{t_k}^{r, i }+\frac{1}{\sqrt{N}}\sum_{j=1}^{N}v_{%
\gamma _{k}}^{r ,j}(\omega ,Y_{t_{k+1}}^{r+1 ,j}).
\end{equation*}%
Notice that the construction of $Y_{t_{k+1}}^{r, i } , 1 \le i \le N, $
involves only random variables of the next layer $r+1.$

\noindent\textbf{Step 3.} Finally, as we already know $Y_{t_{k+1}}^{1, j}$
for every $1 \le j \le N ,$ we may now close the loop, and we put, for any $%
1 \le i \le N, $ 
\begin{equation*}
X_{t_{k+1}}^{ i }=X_{t_k}^{i }+b(Y_{t_{k+1}}^{1, i} )\gamma_{k}.
\end{equation*}

We discuss now the structure of the limit system in discrete time. Due to
the CLT, there will be $r_0$ independent one-dimensional Brownian motions $%
W^1, \ldots, W^{r_0} .$ Each Brownian motion $W^r $ describes the transition
from layer $r+1 $ to layer $r$ such that each particle belonging to layer $r$
will be influenced by $W^r.$ We write $\mathcal{W}^r_s = \sigma \{ W_u^r, u
\le s \}.$

Once more, we initialize our scheme by taking i.i.d. copies within each
layer, such that particles belonging to different layers are also
independent.

The scheme works then recursively as follows. Suppose that $\hat X_{t_k}^i ,
1 \le i \le N, $ and $\hat Y_{t_k}^{r, i }, 1 \le r \le r_0, 1 \le i \le N , 
$ are given.

\bigskip \noindent \textbf{Step 1.} For any $1 \le i \le N, $ we put 
\begin{equation*}
\widehat{Y}_{t_{k+1}}^{r_0, i }=\widehat{Y}_{t_k}^{r_0, i }+\sqrt{%
E(f_{r_{0}}(\widehat{X}_{t_k}^{1}) | \mathcal{W}_{t_k}^1)}\times
(W_{t_{k+1}}^{ r_{0}}-W_{t_{k}}^{r_{0}}).
\end{equation*}

\noindent\textbf{Step 2.} For $1\leq r\leq r_{0}-1$ and for any $1 \le i \le
N, $ we define 
\begin{equation*}
\widehat{Y}_{t_{k+1}}^{r,i }=\widehat{Y}_{t_k}^{r,i }+\sqrt{E(f_{r}(\widehat{%
Y}_{t_{k+1}}^{r+1, 1})| \mathcal{W}_{t_{k+1}}^{r+1})} \times (W_{t_{k+1}}^{r
}-W_{t_{k}}^{r}).
\end{equation*}

\noindent\textbf{Step 3.} Finally for $1 \le i \le N, $ we define%
\begin{equation*}
\widehat{X}_{t_{k+1}}^{i }=\widehat{X}_{t_k}^{i }+b(\hat Y_{t_{k+1}}^{1, i})
\gamma_{k}.
\end{equation*}

\begin{remark}
Notice that the associated limit equation in continuous time reads as 
\begin{equation*}
\widehat{\mathbb{X}}_{0, t}\hat{A}~= \widehat{\mathbb{X}}_{0}+\int_{0}^{t}b(%
\int_{0}^{t_{1}}dW_{t_{2}}^{1}\Phi
_{1}(\int_{0}^{t_{2}}dW_{t_{2}}^{2}...\Phi
_{r_{0}-1}(\int_{0}^{t_{r_0}}dW_{t_{r_{0}}}^{r_{0}}\Phi _{r_{0}}(\widehat{%
\mathbb{X}}_{t_{r_{0}}}^{ })...)dt_{1} ,
\end{equation*}
where 
\begin{equation*}
\Phi _{r}(Z):=\sqrt{E(f_{r}(Z) | \mathcal{W}^{r+1} )},
\end{equation*}
and where we identify $r_0+1 $ with $1.$
\end{remark}

A straightforward adaptation of the arguments of Section \ref{sec:CLT} then
implies the following

\begin{theorem}
For any $1 \le i \le N, $ and for all $k, $ we have that 
\begin{equation*}
W_{2}(X_{t_{k}}^{i},\widehat{X}_{t_{k}}^{i })\leq C_T N^{-1/2}.
\end{equation*}
\end{theorem}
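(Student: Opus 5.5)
The plan is to follow the three-level scheme $X\to\bar X\to\widehat X$ of Section \ref{sec:4true}, now layer by layer. I would stack all layers into one state vector $Z_{t_k}=(X_{t_k},Y^{r_0}_{t_k},\ldots,Y^1_{t_k})\in{\mathbb{R}}^{(r_0+1)\times N}$, with norm $|\cdot|_N$ summed over layers, and couple the systems at each time step. The feature that differs from Section \ref{sec:CLT} is the nested within-step structure: layer $r$ at time $t_{k+1}$ is driven by layer $r+1$ at time $t_{k+1}$, not $t_k$. So one time step is a composition of $r_0+1$ sub-steps, namely Step 1, the $r_0-1$ sub-steps of Step 2, and Step 3. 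Each sub-step has exactly the form of (\ref{pf7}) with no autonomous innovation: an identity plus a perturbation $\frac1{\sqrt N}\sum_j v^{r,j}_{\gamma_k}(\cdot)$ evaluated at the already updated configuration of the next layer. Step 3 is a deterministic Lipschitz drift $b(\cdot)\gamma_k$.

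\textbf{Step A (CLT coupling, $X\to\bar X$).} I define an intermediate system $\bar Z$ by replacing, in each sub-step $r$, the sum $\frac1{\sqrt N}\sum_j v^{r,j}_{\gamma_k}(y^j)$ by $\sqrt{\mu_N(y,f_r)}\,(W^r_{t_{k+1}}-W^r_{t_k})$. Lemma \ref{lem:44}'s argument applies to each sub-step, with $h\equiv0$ for the perturbation sub-steps and with $h$ a Lipschitz drift for Step 3. The bound (\ref{MD4}) from Bonis gives $\bar\varepsilon_N=CN^{-1}$, using that $f_r$ is bounded below and above. Proposition \ref{prop:53} gives the Lipschitz constant $\bar L$ in $|\cdot|_N$. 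Because the optimal couplings are chosen conditionally on the current configuration, I can chain the sub-steps. Each sub-step multiplies $\mathcal W_2^2$ by $(1+Q\gamma_k)$ and adds $2\gamma_k\bar\varepsilon_N$. Over the $r_0+1$ sub-steps this gives a one-step bound
\[
\mathcal W_2^2(Z_{t_{k+1}},\bar Z_{t_{k+1}})\le(1+Q\gamma_k)^{r_0+1}\mathcal W_2^2(Z_{t_k},\bar Z_{t_k})+C r_0\gamma_k N^{-1},
\]
and the discrete Gronwall argument of Proposition \ref{prop:Euler} then yields $e^{QT}\,CN^{-1}$.

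\textbf{Step B (conditional LLN, $\bar X\to\widehat X$).} I proceed as in Theorem \ref{LLNW2}, using a synchronous coupling: the same Brownian motions $W^r$ and the same initial data. I first check inductively, as in the proposition preceding Assumption \ref{ass:iid}, that the coordinates of each layer of $\widehat Z_{t_k}$ are i.i.d. conditionally on $\sigma(W^1,\ldots,W^{r_0})$ up to time $t_k$. Then I decompose $\bar Z-\widehat Z$ into the following pieces:
\begin{itemize}
\item a term Lipschitz in the previous discrepancy, handled via Proposition \ref{prop:53} and the orthogonality of the Brownian increments across $k$;
\item a term $J$ measuring $\sqrt{\mu_N(\widehat y,f_r)}-\sqrt{E(f_r(\widehat y^1)\mid\cdot)}$, bounded by Proposition \ref{prop:56} by $C\gamma_kN^{-1}$ and summed by orthogonality as in (\ref{LN4});
\item the drift step, handled by Proposition \ref{concatenation}\textbf{A}.
\end{itemize}

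\textbf{Main obstacle.} The delicate point is the conditioning $\sigma$-field in Step 2 of the limit scheme. $\widehat Y^{r+1}_{t_{k+1}}$ is built from $W^{r+1}$ up to $t_{k+1}$ and, recursively, from the higher layers, so the conditional expectation must be taken with respect to all of $W^{r+1},\ldots,W^{r_0}$ up to $t_{k+1}$. I also need $W^r_{t_{k+1}}-W^r_{t_k}$ to be independent of that $\sigma$-field and of the previous sub-steps, so that the martingale orthogonality of Section \ref{LN8} still holds. I would handle this by ordering the filtration within each step layer by layer, taking $\mathcal F_{t_k,r}$ increasing as $r$ decreases, and checking that the conditional i.i.d. property holds at each sub-step. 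With this in place, Gronwall gives $E|\bar Z_{t_k}-\widehat Z_{t_k}|_N^2\le C_TN^{-1}$. Combining Steps A and B and using exchangeability, so that $W_2^2(X^i,\widehat X^i)\le\mathcal W_2^2$ of the $X$-layer, gives $W_2(X^i_{t_k},\widehat X^i_{t_k})\le C_TN^{-1/2}$.
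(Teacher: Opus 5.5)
\textbf{Overall.} Your route is the paper's. Its proof only says that the result follows as in Theorem \ref{theo:main}, using (\ref{MD4}) and Proposition \ref{prop:56}. Your construction is how that sketch is meant to be filled in: Lemma \ref{lem:44} chained over the $r_0+1$ sub-steps for $X\to\bar X$, a synchronous-coupling Gronwall argument as in Theorem \ref{LLNW2} for $\bar X\to\widehat X$, then exchangeability. You are also right that the conditioning $\sigma$-field is the crux. However, it changes the statement itself, and the $\sigma$-field you name is still too small.

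\textbf{The conditioning changes the limit object.} In the limit scheme each layer receives an increment common to all its particles. So $\widehat Y^{r+1,i}_{t_{k+1}}=\widehat Y^{r+1,i}_{t_0}+S^{r+1}_{t_{k+1}}$, where the shift $S^{r+1}$ is common to all $i$. The volatility of $S^{r+1}$ depends on the higher layers and, through the loop $Y^1\to X\to Y^{r_0}$, on the past of all of $W^1,\dots,W^{r_0}$. For $r_0\ge 2$ this shift is therefore not $\mathcal W^{r+1}_{t_{k+1}}$-measurable. Likewise $\widehat X_{t_k}$ depends on $W^2,\dots,W^{r_0}$, so $\mathcal W^1_{t_k}$ in Step 1 is too small. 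The consequences are:
\begin{itemize}
\item conditionally on $\mathcal W^{r+1}_{t_{k+1}}$, the coordinates of layer $r+1$ are not independent;
\item $\mu_N(\widehat Y^{r+1}_{t_{k+1}},f_r)$ converges to $\int f_r(y+S^{r+1}_{t_{k+1}})\,\mu^{r+1}_0(dy)$, where $\mu^{r+1}_0$ is the law of $\widehat Y^{r+1,1}_{t_0}$, and not to its $\mathcal W^{r+1}$-conditional expectation;
\item your $J$-term in Step B does not go to zero.
\end{itemize}
With the conditioning literally as in the statement, Proposition \ref{prop:56} does not apply. What you (and the paper's sketch) actually prove is the bound for the scheme with enlarged conditioning, and you should say so explicitly.

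\textbf{The $\sigma$-field you name is still too small.} You propose the $\sigma$-field generated by $W^{r+1},\dots,W^{r_0}$ up to $t_{k+1}$. For the same loop reason, $S^{r+1}_{t_{k+1}}$ also depends on the past of $W^1,\dots,W^{r}$. Take instead
\[
\mathcal G_{k,r}=\mathcal F^W_{t_k}\vee\sigma\bigl(W^q_{t_{k+1}}-W^q_{t_k}:\ r<q\le r_0\bigr),
\]
where $\mathcal F^W_{t_k}$ is generated by all of $W^1,\dots,W^{r_0}$ up to $t_k$, and use $\mathcal F^W_{t_k}$ itself in Step 1. This matches your Step B claim about $\sigma(W^1,\dots,W^{r_0})$ and your filtration $\mathcal F_{t_k,r}$. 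With this choice:
\begin{itemize}
\item the coordinates of layer $r+1$ are i.i.d.\ given $\mathcal G_{k,r}$;
\item the increment $W^r_{t_{k+1}}-W^r_{t_k}$ is independent of $\mathcal G_{k,r}$, of the initial data and of all earlier sub-steps.
\end{itemize}
The rest of your argument then goes through.

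\textbf{Missing hypotheses.} (\ref{MD4}) rests on Bonis' theorem, which needs a fourth-moment bound $E|v^{r,i}_\delta(x)|^4\le M_4\delta^2$. The chaining also needs fresh, independent $v$'s at each time step. Neither is among the stated multilayer hypotheses, so you should add them.
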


We skip the proof since it follows similarly to the proof of Theorem \ref%
{theo:main}, taking into account (\ref{MD4}) and Proposition \ref{prop:56}.

\end{document}